\definecolor{gelb}{RGB}{255,170,0}
\setlist{leftmargin=8mm}
\newcommand{\RR}{\mathscr{R}}
\newcommand{\X}{\mathfrak{X}}
\newcommand{\II}{\mathrm{II}} 
\newcommand{\tr}{\mathrm{tr}} 
\newcommand{\R}{\mathbb{R}}
\newcommand{\N}{\mathbb{N}}
\newcommand{\Reg}{(H)}
\newcommand{\eps}{\varepsilon}
\renewcommand{\phi}{\varphi}
\newcommand{\scal}{\mathrm{scal}}
\newcommand{\ric}{\mathrm{ric}}
\renewcommand{\div}{\mathrm{div}}
\newcommand{\<}{\left\langle}
\renewcommand{\>}{\right\rangle}
\newcommand{\myicon}{$\,\,\,\triangleright$}
\renewcommand{\bullet}{{\mathbin{\vcenter{\hbox{\scalebox{2}{\mbox{$\cdot$}}}}}}}
\newcommand{\SO}{\mathrm{SO}}
\renewcommand{\O}{\mathrm{O}}
\DeclareMathOperator{\Stab}{\mathrm{Stab}} 
\DeclareMathOperator{\id}{\mathrm{id}}
\DeclareMathOperator{\Diff}{\mathrm{Diff}}
\newcommand{\LL}{\mathscr{L}}
\newcommand{\OO}{\mathrm{O}}
\newcommand{\grad}{\mathrm{grad}}
\renewcommand{\div}{\mathrm{div}}
\newcommand{\QED}{\tag*{$\qed$}}
\newtheorem{maintheorem}{Theorem}
\newtheorem{theorem}{Theorem}[section]
\newtheorem{lemma}[theorem]{Lemma}
\newtheorem{proposition}[theorem]{Proposition}
\newtheorem{corollary}[theorem]{Corollary} 
\newtheorem{addendum}[theorem]{Addendum} 
\newtheorem{setting}[theorem]{Setting} 
\newtheorem{assumption}[theorem]{Assumption} 
\theoremstyle{definition}
\newtheorem{remark}[theorem]{Remark}
\newtheorem{notation}[theorem]{Notation}
\newtheorem{example}[theorem]{Example}
\numberwithin{equation}{section}
\begin{document}

\title[Contractibility of spaces of psc metrics with symmetry]{Contractibility of spaces of positive scalar curvature metrics with symmetry}
\author{Christian B\"ar}
\address{Universit\"at Potsdam, Institut f\"ur Mathematik, 14476 Potsdam, Germany}
\email{\href{mailto:christian.baer@uni-potsdam.de}{christian.baer@uni-potsdam.de}}
\urladdr{\url{https://www.math.uni-potsdam.de/baer}}
\author{Bernhard Hanke}
\address{Universit\"at Augsburg, Institut f\"ur Mathematik, 86135 Augsburg, Germany}
\email{\href{mailto:hanke@math.uni-augsburg.de}{hanke@math.uni-augsburg.de}}
\urladdr{\url{https://www.math.uni-augsburg.de/diff/hanke}}

\begin{abstract} We show the contractibility of spaces of invariant Riemannian metrics of positive scalar curvature on compact connected manifolds of dimension at least two, with and without boundary and equipped with compact Lie group actions.
On manifolds without boundary, we assume that the Lie group contains a normal $S^1$-subgroup with at least one fixed-point component of codimension two.
In this situation, the existence of invariant metrics of positive scalar curvature was known previously.

For the proof, we combine equivariant Morse theory with conformal deformations near unstable manifolds. 
On manifolds without boundary, we also use local flexibility properties of positive scalar curvature metrics and the smoothing of mean-convex singularities.
\end{abstract}

\keywords{Positive scalar curvature, equivariant metrics, contractibility, equivariant Morse theory, Riemannian submersions, flexibility lemma, mean-convex singularities}

\subjclass[2020]{Primary 53C20; secondary 57S15}

\maketitle

\tableofcontents

\section{Introduction}

Let $M$ be a compact connected smooth manifold, possibly with boundary. 
It is well known that in dimensions $\ge 5$, the space of all Riemannian metrics of positive scalar curvature on $M$ (and satisfying appropriate boundary conditions if $\partial M \neq \emptyset$) is either empty or can have a very rich topology, see \cites{HSS14, BERW, BaerHanke2, ERW} among others.

Under symmetry assumptions, the situation can change drastically: 
positive scalar curvature metrics always exist under certain assumptions on the group action, by the work of Lawson and Yau \cite{LY74}*{Main Theorem} and by Wiemeler \cite{MW16}*{Theorem~1.1}.
In the present work we show that under Wiemeler's assumptions the space of all such metrics is contractible.

The existence question of $S^1$-invariant metrics of positive scalar curvature has furthermore been studied by B\'erard-Bergery, Hanke, and Lott in \cites{BB, Lott99, Hanke08}, among others.
For example, by B\'erard-Bergery's Theorem~C in \cite{BB}, a closed free $S^1$-manifold $M$ admits an $S^1$-invariant metric of positive scalar curvature if and only if the orbit space $M/S^1$ admits a metric of positive scalar curvature.
This can be used to construct closed free $S^1$-manifolds admitting metrics of positive scalar curvature, but no $S^1$-invariant such metrics, see \cite{BB}*{Example 9.2}. 

In general, the codimension of any fixed-point component in an $S^1$-mani\-fold is even because $S^1$ acts on the normal bundle without trivial summands.
Examples of $S^1$-manifolds with fixed-point components of codimension $4k$, $k \geq 1$, and not admitting $S^1$-invariant metrics of positive scalar curvature can be constructed using Lott's Theorem~5 in \cite{Lott99}.

Let $\Gamma$ be a compact, possibly non-connected Lie group acting smoothly on $M$.
We denote by $\mathscr{R}^\Gamma_{>0}(M)$ the space of $\Gamma$-invariant Riemannian metrics of positive scalar curvature on $M$, equipped with the $C^{\infty}$-topology.

Wiemeler showed the following result in \cite{MW16}*{Theorem~1.1}: 
Suppose $\Gamma$ is a compact Lie group and let $M$ be a closed connected smooth $\Gamma$-manifold such that $\Gamma$ contains a central $S^1$-subgroup with at least one  fixed-point component of codimension $2$.
Then $\mathscr{R}^{\Gamma}_{>0}(M) \neq \emptyset$.
This result is independent of spin and fundamental group assumptions.
Wiemeler's proof can be adapted to the case when $\Gamma$ contains a normal (rather than central) $S^1$-subgroup with at least one  fixed-point component of codimension~$2$.

The main result of the present paper, Theorem~\ref{mainB}, shows that under this weaker assumption, $\mathscr{R}^{\Gamma}_{>0}(M)$ is in fact contractible.
In particular, our approach provides an alternative proof of Wiemeler's existence result. 

Complete descriptions of the homotopy types of spaces with positive scalar curvature metrics on closed manifolds have so far remained elusive in dimensions greater than three, regardless of symmetry assumptions. 
Such homotopy types are well understood in dimension two by the uniformization theorem, and in dimension three  due to the work of Marques \cite{Marques} and Bamler-Kleiner \cite{BamKlein}.

Before proving Theorem~\ref{mainB}, we show the following result for spaces of $\Gamma$-invariant metrics of positive scalar curvature on manifolds with boundary.
This result is of independent interest. 

\begin{maintheorem} \label{mainA} Let $\Gamma$ be a compact Lie group, let $M$ be a compact connected smooth $\Gamma$-manifold of dimension at least $2$ and with non-empty (not necessarily connected) boundary.
Then $\mathscr{R}^{\Gamma}_{>0}(M)$ is contractible.
\end{maintheorem}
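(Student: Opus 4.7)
The plan is to realize $\mathscr{R}^\Gamma_{>0}(M)$ as a strong deformation retract of the convex, hence contractible, space $\mathscr{R}^\Gamma(M)$ of all $\Gamma$-invariant Riemannian metrics on $M$; this yields both non-emptiness and contractibility of $\mathscr{R}^\Gamma_{>0}(M)$ at once. Equivalently, it suffices to show that every continuous family $\phi\colon K \to \mathscr{R}^\Gamma(M)$ from a compact space can be homotoped into $\mathscr{R}^\Gamma_{>0}(M)$ fixing whatever part of $K$ already maps there, by first extending convexly across $K$ and then applying a universal conformal correction.

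For the conformal correction I would first choose a $\Gamma$-equivariant Morse--Bott function $f\colon M \to [0,\infty)$ with $f^{-1}(0) = \partial M$ and no interior local minima; such a function exists because $\partial M \neq \emptyset$, and its critical $\Gamma$-orbits in $\inn M$ then all have positive index. The associated unstable manifolds $W^u(C)$ are $\Gamma$-invariant of positive codimension and yield a $\Gamma$-equivariant stratification of $M$. Given $g \in \mathscr{R}^\Gamma(M)$, I would then construct a positive $\Gamma$-invariant conformal factor $u_g\colon M \to (0,\infty)$, concentrated in a tubular neighborhood of $\bigcup_C W^u(C) \cup \partial M$, such that $u_g^{4/(n-2)} g \in \mathscr{R}^\Gamma_{>0}(M)$. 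Using the standard identity
\[
\scal_{u^{4/(n-2)}g} = u^{-(n+2)/(n-2)}\!\left(-\tfrac{4(n-1)}{n-2}\Delta_g u + \scal_g u\right),
\]
a concentration construction inside the tube, averaged over $\Gamma$, produces such a $u_g$: the $-\Delta_g u$ term can be forced to dominate $\scal_g u$ wherever $\scal_g$ is nonpositive, and the positive codimension of the $W^u(C)$ leaves $u_g \equiv 1$ away from the tube. The homotopy $t \mapsto (1-t)g + t\, u_g^{4/(n-2)} g$ in $\mathscr{R}^\Gamma(M)$, with further conformal corrections of the same type applied along the path, then produces the desired deformation into $\mathscr{R}^\Gamma_{>0}(M)$.

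The main obstacle is carrying out this construction in a way that is simultaneously $\Gamma$-equivariant, continuous in $g$ in the $C^\infty$-topology, and trivial on $\mathscr{R}^\Gamma_{>0}(M)$. Equivariance is handled by averaging local concentration constructions over $\Gamma$; continuity follows from uniform smooth dependence of the concentration data on the Riemannian and Morse input; and triviality on the psc subspace is engineered by scaling the pointwise size of $u_g - 1$ and the diameter of the tube by a fixed smooth monotone cutoff applied to $\min_M \scal_g$, vanishing once this quantity becomes positive. Checking that these three requirements can be reconciled uniformly in $g$, and that the intermediate metrics along the straight-line homotopy can all be corrected to psc by the same mechanism, is the key technical point.
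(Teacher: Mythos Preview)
Your overall strategy---equivariant Morse theory plus conformal deformations near the unstable manifolds---matches the paper's, but there is a genuine gap in how you pass from the local conformal correction to a global psc metric.

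You assert that a conformal factor $u_g$ with $u_g \equiv 1$ outside a tubular neighborhood of $\bigcup_C W^u(C) \cup \partial M$ makes $u_g^{4/(n-2)} g$ psc on all of $M$. But outside the tube one simply has $\scal_{u_g^{4/(n-2)}g} = \scal_g$, which need not be positive. If the $W^u(C)$ all have positive codimension (as you claim), then their union together with $\partial M$ is a closed measure-zero subset; it does \emph{not} stratify $M$, and any tubular neighborhood of it is a proper open subset. Your two assertions---that the $W^u(C)$ have positive codimension and that they give a stratification of $M$---are in fact incompatible: in the Morse picture the unstable cells stratify $M$ only when the top cells have full dimension.

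The paper's remedy is a step you omit entirely: the gradient flow. After an inductive conformal construction (using $e^{2\psi}$ with $\psi = -\Lambda\,\chi\,r^2$, $r$ the distance to the current unstable manifold) produces a family $G_U\colon D^m \to \mathscr{R}^\Gamma_{>0}(U)$ on an open neighborhood $U$ of the attractor $\mathscr{C}_{\le k} = \bigcup_i \mathscr{W}^u_i$, the paper uses that the flow $\Psi^t$ of $-\grad f$ satisfies $\Psi^{t_0}(M) \subset U$ for some large $t_0$. The pullback $(\Psi^{t_0})^* G_U$ is then a family of psc metrics on all of $M$, and concatenating with the isotopy $(\Psi^{t})^* g$, $t \in [0,t_0]$, gives the required extension $D^m \to \mathscr{R}^\Gamma_{>0}(M)$. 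This flow step is what transports the local psc property across the complement of the tube; without it your $u_g$ cannot repair the scalar curvature there.

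Two smaller points. First, arranging that all unstable manifolds have positive codimension is not automatic in the equivariant setting; the paper passes to a \emph{special} $\Gamma$-Morse function in the sense of Mayer and performs a handle cancellation on the regular orbit space $M_{\Reg}/\Gamma$ to kill the interior local maxima. Second, the conformal law $u^{4/(n-2)}$ fails for $n \le 2$; the paper's choice $g \mapsto e^{2\psi} g$ works uniformly in dimension.
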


For $\Gamma = \{1\}$, Theorem~\ref{mainA} is a consequence of Gromov's h-principle \cite{GromovPartial}.
An equivariant version of Gromov's h-principle by Bierstone \cite{Bierstone}*{Theorem~3.2} applies, if for every closed subgroup $H < \Gamma$ each connected component of the union of all $\Gamma$-orbits with isotropy group conjugate to $H$ has a non-empty intersection with the boundary of $M$.
This condition is not required in Theorem~\ref{mainA}.

Our proof of Theorem~\ref{mainA}, which is independent of \cites{Bierstone, GromovPartial}, is based on equivariant Morse theory \cites{Mayer,  Wasserman} and conformal deformations around unstable manifolds.
In contrast to general h-principle techniques, it makes the construction of the relevant contracting homotopies quite explicit.

\begin{maintheorem} \label{mainB} 
Let $\Gamma$ be a compact Lie group and let $M$ be a closed connected smooth $\Gamma$-manifold.
Suppose that $\Gamma$ contains a normal $S^1$-subgroup with at least one fixed-point component of codimension $2$.
Then $\mathscr{R}^{\Gamma}_{>0}(M)$ is contractible.
\end{maintheorem} 

For this we combine Theorem~\ref{mainA} with the local flexibility lemma and with deformation properties of boundary conditions for positive scalar curvature metrics, proved by the authors in \cite{BaerHanke1} and \cite{BaerHanke2}, respectively.
Appendix~\ref{sec:EqFlexLemma} gives a short proof of an equivariant local flexibility lemma for smooth submanifolds, which is sufficient for our purposes.

Let $T^n = (S^1)^n$ be the $n$-torus.
Recall that a {\em torus manifold} is a closed connected smooth $2n$-dimensional effective $T^n$-manifold with non-empty fixed-point set.
Smooth compact toric varieties are examples of torus manifolds.
The following corollary lists some situations where the assumptions of Theorem~\ref{mainB} are satisfied; compare the proofs of \cite{MW16}*{Cor.~2.6, Cor.~2.8 and Cor.~2.9}.

\begin{corollary} \begin{enumerate}[(a)]
   \item Let $M$ be a torus manifold of dimension $2n$.
            Then $\mathscr{R}^{T^n}_{>0}(M)$ is contractible.
  \item Let $M$ be a closed connected smooth effective $S^1$-manifold of dimension $4$ and with negative Euler characteristic. 
             Then $\RR^{S^1}_{>0}(M)$ is contractible.
   \item Let $M$ be a closed connected oriented smooth effective $S^1$-manifold of dimension $4$.
   Assume that the action is semi-free, i.e., there are only fixed points or free orbits, and that $M$ has non-vanishing signature.
   Then $\RR^{S^1}_{>0}(M)$ is contractible.
 \end{enumerate}
\end{corollary}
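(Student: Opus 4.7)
The plan is to deduce all three statements from Theorem~\ref{mainB} by verifying its hypothesis in each case: given the acting group $\Gamma$ (which is $H$ in (a) and $S^1$ in (b) and (c)), one must exhibit a circle subgroup in $Z(\Gamma)$ whose fixed-point set in $M$ contains a component of codimension two. Once such a circle is produced, Theorem~\ref{mainB} directly yields the contractibility of $\mathscr{R}^{\Gamma}_{>0}(M)$.

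Parts (b) and (c) are the easier cases since $\Gamma = S^1$ is its own centre, so only the codim-$2$ condition is required. For (b), I would use the identity $\chi(M) = \chi(M^{S^1})$ valid for any smooth circle action on a closed manifold; isolated fixed points contribute $+1$ each, so $\chi(M) < 0$ forces $M^{S^1}$ to contain at least one component of positive dimension, and in a $4$-manifold this must be a closed surface, of codimension two. For (c), I would apply the $G$-signature theorem to the given semi-free circle action: at an isolated fixed point the tangent representation is of the form $\mathbb{C}_a \oplus \mathbb{C}_b$ with $|a| = |b| = 1$ (forced by semi-freeness), and a direct sum-cancellation shows the contributions from such isolated fixed points to the equivariant signature force $\sigma(M) = 0$. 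Non-vanishing signature therefore produces a two-dimensional component of $M^{S^1}$, i.e.\ one of codimension two.

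For part (a), $H \subset T^n$ is abelian, so $Z(H) = H$, and I would follow the strategy of Wiemeler's proofs of \cite{MW16}*{Cor.~2.6, Cor.~2.8, Cor.~2.9}. Choose a $T^n$-fixed point $p \in M$ (which exists by the definition of a torus manifold) and decompose $T_pM = \bigoplus_{i=1}^n V_i$ into the real two-dimensional weight summands with weights $\alpha_1, \dots, \alpha_n \in \mathrm{Hom}(T^n, S^1)$. The restricted characters $\alpha_i|_H$ are the $H$-weights at $p$. For a suitable choice of $p$ (among the possibly many $T^n$-fixed points) one selects a primitive cocharacter $\xi \in \mathrm{Lie}(H)$ lying in the kernel of all but one $\alpha_i|_H$; the corresponding circle $S^1_\xi \subset H$ then satisfies $\dim (T_pM)^{S^1_\xi} = 2n-2$, so the connected component of $M^{S^1_\xi}$ through $p$ is a codim-$2$ submanifold of $M$.

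The main obstacle is the combinatorial step in (a): one must show that at \emph{some} $T^n$-fixed point the restricted weights $\alpha_i|_H$ are non-degenerate enough with respect to the inclusion $H \hookrightarrow T^n$ for a cocharacter $\xi$ of the desired type to exist. This requires structural input about torus manifolds — such as the fact that the isotropy circles of the characteristic submanifolds generate $T^n$ — to rule out configurations in which the weight pictures at all fixed points are simultaneously degenerate with respect to $H$. Parts (b) and (c), by contrast, are essentially straightforward reductions to classical fixed-point formulas.
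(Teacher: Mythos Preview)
Your overall strategy---verify the hypothesis of Theorem~\ref{mainB} in each case---is exactly what the paper indicates, referring to the arguments for \cite{MW16}*{Cor.~2.6, 2.8, 2.9}. Parts (b) and (c) are handled correctly: for (b), $\chi(M)=\chi(M^{S^1})$ combined with the even codimension of fixed components forces a codimension-two component once $\chi(M)<0$; for (c), the $G$-signature theorem applied to a semi-free action with only isolated fixed points yields $\mathrm{sign}(t,M)=\big(\tfrac{t+1}{t-1}\big)^2\sum_p\epsilon_1(p)\epsilon_2(p)$, and since this must equal the constant $\sigma(M)$ for all $t\ne1$, the sum (and hence $\sigma(M)$) vanishes.

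For part (a), however, the obstacle you flag is not a technicality that further structural input about torus manifolds will dissolve---it is a genuine gap in the approach. Take $M=\mathbb{C}P^2$ with the standard $T^2$-action $(t_1,t_2)\cdot[z_0{:}z_1{:}z_2]=[z_0{:}t_1z_1{:}t_2z_2]$ and $H=\{(t,t^2):t\in S^1\}$. Then $H$ is a one-dimensional closed subgroup, so the only circle in $H$ is $H$ itself, and one checks directly that $M^H$ consists of the three coordinate points; at each $T^2$-fixed point both restricted weights are non-zero. Thus no $S^1\subset H$ has a codimension-two fixed component, and the hypothesis of Theorem~\ref{mainB} fails for $\Gamma=H$. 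The same phenomenon persists for $\mathbb{C}P^n$ with $H=\{(t,t^2,\dots,t^n)\}$, and the fact that the isotropy circles of the characteristic submanifolds generate $T^n$ does not help, since none of them need lie in $H$. Wiemeler's Corollary~2.6 concerns only \emph{existence} of an invariant positive scalar curvature metric, for which the trivial inclusion $\mathscr{R}^{T^n}_{>0}(M)\subset\mathscr{R}^H_{>0}(M)$ already suffices once the case $H=T^n$ is settled; but that inclusion does not transfer contractibility. So either part (a) requires an argument genuinely different from Theorem~\ref{mainB}, or the statement should be restricted (for instance to $H=T^n$, or to $H$ containing the isotropy circle of some characteristic submanifold).
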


The paper is structured as follows.
In Section~\ref{sec.ProofMainA} we prove Theorem~\ref{mainA}.
We use equivariant Morse theory as an essential tool.
The rest of the paper is devoted to the proof of Theorem~\ref{mainB}.
In Section~\ref{sec.scalmeansub} we study Riemannian submersions with a focus on scalar curvature and mean curvature of the boundary.
In Section~\ref{sec.Killing} we investigate the scalar curvature of deformations of $S^1$-invariant metrics obtained by rescalings with different rates along and orthogonal to the $S^1$-orbits.
Section~\ref{sec.uniformization} is devoted to a uniformization result.
We show that a family of invariant metrics can be deformed by suitable diffeomorphisms such that the normal exponential maps of an invariant subset coincide in a tubular neighborhood for all members of the family.
Section~\ref{sec.smoothing} deals with the smoothing of metrics on manifolds obtained by gluing together two manifolds along a common boundary.
Under a condition on the mean curvatures of the boundaries, such smoothings can be performed while keeping the scalar curvature positive.
This is known for metrics without symmetry.
The novelty here is that we can also perform the smoothing in an equivariant way.
This requires an equivariant version of the flexibility lemma, which is proved in Appendix~\ref{sec:EqFlexLemma}.
All the pieces are put together in Section~\ref{sec.proofmainB} to prove Theorem~\ref{mainB}.
This requires another use of the equivariant flexibility lemma as well as a geometric discussion of the $1$-jet of a metric along a submanifold which we include in Appendix~\ref{sec.jet1}.

\medskip

\textit{Acknowledgments:} 
The authors thank Kai Cieliebak, Georg Frenck and Misha Gromov for helpful comments.
They gratefully acknowledge support by the SPP 2026 ``Geometry at Infinity,'' funded by the DFG, and by the IAS Princeton, the Sapienza Universit\`a di Roma, and the University of Augsburg and the University of Potsdam.
B.~H.\ was supported by the Hausdorff Research Institute for Mathematics.

\section{Proof of Theorem~\ref{mainA}}
\label{sec.ProofMainA}

This section is devoted to the proof of Theorem~\ref{mainA}.

\begin{remark} \label{weaklyetc} 
Let $\Gamma$ be a compact Lie group and let $M$ be a smooth $\Gamma$-manifold, possibly with boundary.
The space $\RR^{\Gamma}_{>0}(M)$ of $\Gamma$-invariant Riemannian metrics of positive scalar curvature is an open subset of the Fr\'{e}chet space of all $\Gamma$-invariant sections of the symmetric $(0,2)$-tensor bundle $\mathrm{Sym}^2 (M) \to M$, equipped with the weak $C^{\infty}$-topology.
Hence, $\RR^{\Gamma}_{>0}(M)$ is a metrizable manifold in the sense of Palais, see \cite{Palais}*{Corollary 1}.
It follows from the corollary to Theorem~15 in \cite{Palais} that $\RR^{\Gamma}_{>0}(M)$ is contractible if and only if it is path connected and has trivial homotopy groups in all degrees.
This is what we will prove in the sequel.
\end{remark}

Let $\Gamma$ be a compact Lie group and let $M$ be a compact connected smooth $\Gamma$-manifold of dimension at least $2$ and with non-empty boundary $\partial M$.
Let $m \geq 0$, let $D^m$ be the closed $m$-ball and let 
\[
       g \colon \partial D^m \to \mathscr{R}^{\Gamma}_{>0}({M})
\]
be a continuous map. 
This assumption is empty for $m = 0$.
To prove Theorem~\ref{mainA}, we need to show that there exists a continuous map  $G \colon D^{m} \to \RR^{\Gamma}_{>0} (M)$ such that $G|_{\partial D^m} = g$, compare Remark~\ref{weaklyetc}.

Let 
\[
    f \colon {M} \to \R
\]
be a $\Gamma$-invariant Morse function, see \cite{Mayer}*{Definition~1.1} and \cite{ Wasserman}*{\S4}. 
In particular, the set of critical points of $f$ consists of non-degenerate critical orbits.
As the space of $\Gamma$-invariant Morse functions is open and dense in the space of all $\Gamma$-invariant smooth functions $M \to \R$ in the $C^2$-topology, we may assume that $f \leq 1$, that $f^{-1}(\{1\}) = \partial {M}$ and that all critical orbits of $f$ are contained in the interior $\mathrm{int}({M})$ of $M$.
It follows from the equivariant Morse lemma \cite{Mayer}*{Satz~1.3} that $f$ has only finitely many critical orbits.
After a $C^2$-small deformation, we can and will assume that the values of $f$ on these critical orbits are pairwise different.

Given a Euclidean vector space $V$, we denote by $B(V) \subset V$ the open unit ball in $V$.
Let 
\[
     \mathcal{O}_1, \ldots, \mathcal{O}_k \subset \mathrm{int}({M})
\]
be the  critical orbits of $f$, ordered in such a way that $f(\mathcal{O}_i) < f(\mathcal{O}_j)$ for $i < j$.
By the equivariant Morse lemma \cite{Mayer}*{Satz~1.3}, the following holds:
for each $1 \leq i \leq k$, there exists a closed subgroup $H_i < \Gamma$, two orthogonal linear $H_i$-representations $W^u_i$ and $W^s_i$ and a $\Gamma$-equivariant smooth embedding of a {\em $\Gamma$-handle}
\[ 
     \varphi_i \colon   \Gamma \times_{H_i} (B(W^u_i) \times B(W^s_i)) \hookrightarrow \mathrm{int}( {M}) 
\]
onto an open neighborhood of $\mathcal{O}_i$ sending  $\Gamma / H_i = \Gamma \times_{H_i} 0$ onto $\mathcal{O}_i$ and such that for $(\gamma, u, v) \in \Gamma \times_{H_i} (B(W^u_i) \times B(W^s_i))$, we have
\[
      (f \circ \varphi_i) ( \gamma, u, v) = f(\mathcal{O}_i) - |u|^2 + |v|^2 . 
\]
Note that the isotropy group of each point in $\mathcal{O}_i$ is conjugate to $H_i$.
We set 
\[
      V_i := \mathrm{im}(\varphi_i) \subset \mathrm{int}(M) .
\] 
We can assume that the $V_i$ are pairwise disjoint for $1 \leq i \leq k$.
Note that
\begin{equation} \label{dimsum}
    \dim_\R W^u_i + \dim_\R  W^s_i + \dim \Gamma - \dim H_i =  \dim M.
\end{equation}
The dimension of $W^u_i$ is called the {\em index} and the dimension of $W^s_i$ is called the {\em coindex} of $f$ at $\mathcal{O}_i$.

Fix a bi-invariant Riemannian metric on $\Gamma$.
Each $\Gamma \times_{H_i} (B(W^u_i) \times B(W^s_i))$ carries a canonical $\Gamma$-invariant Riemannian metric induced by the chosen metric on $\Gamma$ and the Euclidean metrics on $W^u_i$ and  $W^s_i$.
These metrics and the diffeomorphisms $\varphi_i$ induce $\Gamma$-invariant metrics on the neighborhoods $V_i$ of $\mathcal{O}_i$. 

Choose a $\Gamma$-invariant Riemannian metric $\mu$ on ${M}$ that restricts to these metrics on $V_i$.
Let $X:=-\grad(f) \in C^{\infty}(T{M})$ be the gradient field of $-f$ with respect to $\mu$ and let $\Psi^t \colon {M} \to {M}$ be the flow induced by $X$.
Then, $X$ and $\Psi^t$ are $\Gamma$-equivariant.
For each $x \in {M}$,  there is a maximal interval $\mathcal{I}_x \subset \R$ such that $\Psi^t(x)$ is defined for all $t \in \mathcal{I}_x$.
Since $f$ decreases along the integral curves of $X$, we have $[0, \infty) \subset \mathcal{I}_x$.

We consider the ``ancient'' subset
\[
    \mathcal{A} := \{ x \in M \mid  \mathcal{I}_x = \R \} \subset M.
\]

\begin{lemma} \label{closed}  $\mathcal{A}$ is a closed subset of $M$.
\end{lemma}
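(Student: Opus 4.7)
The plan is to show that $M \setminus \mathcal{A}$ is open. The key geometric fact is that $X = -\grad(f)$ points strictly into $M$ along $\partial M$: since every critical orbit of $f$ lies in $\mathrm{int}(M)$, the hypersurface $\partial M = f^{-1}(\{1\})$ is a regular level set of $f$, so $\grad(f)$ is a nowhere-vanishing outward-pointing normal to $\partial M$.

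First I would use the flow-box/collar construction for the transverse inward vector field $X$ to produce an $\epsilon_0 > 0$ and a diffeomorphism
\[
      \Phi \colon \partial M \times [0, \epsilon_0) \to U, \qquad \Phi(y, s) := \Psi^s(y),
\]
onto an open collar neighborhood $U$ of $\partial M$ in $M$. By construction, the backward trajectory of every point of $U$ meets $\partial M$ in time less than $\epsilon_0$, so $U \cap \mathcal{A} = \emptyset$.

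Now take any $x \in M \setminus \mathcal{A}$. If $x \in \partial M$, then $U$ itself is an open neighborhood of $x$ disjoint from $\mathcal{A}$. Otherwise $x \in \mathrm{int}(M)$, and $T_- := \inf \mathcal{I}_x$ is a finite negative number. I would argue that $\Psi^t(x) \to \partial M$ as $t \to T_-^+$, and hence pick some $t_0 \in (T_-, 0)$ with $\Psi^{t_0}(x) \in U$. Continuity of the flow then yields an open neighborhood $V$ of $x$ on which $\Psi^{t_0}$ is defined and satisfies $\Psi^{t_0}(V) \subset U$. For each $y \in V$, the backward trajectory of $\Psi^{t_0}(y) \in U$ reaches $\partial M$ in time at most $\epsilon_0$, so the backward trajectory of $y$ itself dies in finite time, i.e., $y \notin \mathcal{A}$. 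This exhibits the desired open neighborhood $V \subset M \setminus \mathcal{A}$ of $x$.

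The main (mild) technical obstacle is verifying that $\Psi^t(x)$ approaches $\partial M$ as $t \to T_-^+$. I would argue by contradiction, using compactness of $M$: if some sequence $t_n \to T_-^+$ satisfies $\mathrm{dist}(\Psi^{t_n}(x), \partial M) \geq \delta > 0$, then a subsequence of $\Psi^{t_n}(x)$ converges to an interior point $y^\ast$ bounded away from $\partial M$. Standard ODE theory gives a uniform time interval around $0$ on which the flow starting from points near $y^\ast$ stays in $\mathrm{int}(M)$; continuous dependence on initial conditions transfers this to the sequence $\Psi^{t_n}(x)$ for $n$ large, allowing one to extend the trajectory of $x$ to times strictly below $T_-$ and contradicting the maximality of $\mathcal{I}_x$. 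With this in hand, the rest of the argument only uses the collar neighborhood and continuity of the flow.
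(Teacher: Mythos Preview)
Your argument is correct. The paper takes a different, more concise route: it observes that the integral curves $\gamma_x\colon \mathbb{R}\to M$, $\gamma_x(t)=\Psi^t(x)$, for $x\in\mathcal{A}$ form an equicontinuous family (since $X$ is bounded on the compact $M$) and then invokes the Arzel\`a--Ascoli theorem. If $x_i\in\mathcal{A}$ converge to $x\in M$, the curves $\gamma_{x_i}$ subconverge locally uniformly to a curve through $x$, which is necessarily an integral curve of $X$ defined on all of $\mathbb{R}$; hence $x\in\mathcal{A}$. This is a pure compactness argument that never mentions $\partial M$ and would apply verbatim to any smooth vector field on a compact manifold with boundary. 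Your approach, by contrast, exploits the specific geometry---the inward transversality of $X$ along $\partial M$ and the resulting collar---to show directly that $M\setminus\mathcal{A}$ is open. It is a bit longer but makes the mechanism transparent: failure to be ancient means escape through $\partial M$ in finite backward time, and this is an open condition by continuous dependence.
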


\begin{proof}
To each $x\in\mathcal{A}$ we associate the curve $\gamma_x\colon\R\to M$ given by $\gamma_x(t)=\Psi^t(x)$.
The set $\mathcal{F}:=\{\gamma_x \mid x\in\mathcal{A}\}$ is equicontinuous and hence, by the Arzel\`a-Ascoli theorem, relatively compact in the space of continuous curves in $M$.

Let $x_i\in\mathcal{A}$ converge to $x\in M$.
Then the curves $\gamma_{x_i}$ subconverge and hence $\Psi^t(x)$ is defined for all $t\in\R$.
\end{proof}

For $1 \leq i \leq k$, we consider the $\Gamma$-invariant subsets of $\mathrm{int}(M)$ defined by 
\begin{align*}
      \mathscr{W}^u_i & := \{ x \in \mathcal{A}  \mid  \lim_{t \to  -\infty} \Psi^{t}(x) \in \mathcal{O}_i \} , \\
       \mathscr{C}_{\leq i}  & := \bigcup_{1 \leq j \leq i} \mathscr{W}^u_{j} .
\end{align*}
We call $\mathscr{W}^u_i$  the  {\em unstable manifold} of the critical orbit $\mathcal{O}_i$.

\begin{lemma} \label{morse_flow} For $1 \leq i \leq k$, the following assertions hold: 
\begin{enumerate}[(i)]
    \item \label{one}  $\mathscr{W}^u_i$ is a smooth submanifold of $\mathrm{int}(M)$;
    \item \label{two}  $\mathscr{C}_{\leq i}$ is a closed subset of  $M$;
    \item \label{three} for every open neighborhood $U \subset {M}$ of $\mathscr{C}_{\leq k}$ there exists a $t_0 \geq 0$ such that for all $t \geq t_0$ we have $\Psi^t({M}) \subset U$.
\end{enumerate}
\end{lemma}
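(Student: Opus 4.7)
For (i), the plan is the standard flow-out construction. Inside the Morse chart, the \emph{local} unstable manifold $L_i := \varphi_i(\Gamma \times_{H_i}(B(W^u_i) \times \{0\}))$ is a $\Gamma$-invariant smoothly embedded submanifold of $V_i$, and the Morse normal form $f \circ \varphi_i(\gamma,u,v) = f(\mathcal{O}_i) - |u|^2 + |v|^2$ implies that a point in $V_i$ satisfies $\Psi^{-t}(x) \in V_i$ for all large $t$ precisely when its $W^s_i$-coordinate vanishes. Consequently $x \in \mathscr{W}^u_i$ iff $\Psi^{-t_0}(x) \in L_i$ for some $t_0 \geq 0$, so $\mathscr{W}^u_i = \bigcup_{t \geq 0} \Psi^t(L_i)$. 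Since $f \circ \Psi^t$ is non-increasing and $\partial M = f^{-1}(1)$ is the maximal level set, $X$ points inward along $\partial M$, so each $\Psi^t$ with $t \geq 0$ is a diffeomorphism of $M$ onto its image; applying $\Psi^{t_0}$ to a chart of $L_i$ at $\Psi^{-t_0}(x)$ then provides a smooth local parametrization of $\mathscr{W}^u_i$ around $x$.

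For (ii), the key tool is that $f$ is a Lyapunov function. Given $x_n \in \mathscr{C}_{\leq i}$ with $x_n \to x$, after passing to a subsequence I may assume $x_n \in \mathscr{W}^u_j$ for a single $j \leq i$. By Lemma~\ref{closed} the limit $x$ lies in $\mathcal{A}$, so its full trajectory is defined; the backward $\alpha$-limit set of $x$ is then nonempty, compact, connected and contained in the critical set of $f$, and since the finitely many critical orbits have pairwise distinct $f$-values this limit set reduces to a single critical orbit $\mathcal{O}_{j'}$, giving $x \in \mathscr{W}^u_{j'}$. It remains to verify $j' \leq i$, which I would reduce to showing $f(\mathcal{O}_{j'}) \leq f(\mathcal{O}_j)$: otherwise pick $\eps > 0$ and an open neighborhood $U'$ of $\mathcal{O}_{j'}$ on which $f > f(\mathcal{O}_{j'}) - \eps > f(\mathcal{O}_j)$, choose $T > 0$ with $\Psi^{-T}(x) \in U'$, and use continuity to obtain $\Psi^{-T}(x_n) \in U'$ for large $n$; this contradicts $f(\Psi^{-T}(x_n)) \leq f(\mathcal{O}_j)$, which holds because $x_n \in \mathscr{W}^u_j$ forces the entire trajectory of $x_n$ to lie in $\{f \leq f(\mathcal{O}_j)\}$.

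For (iii), I would argue by contradiction using Arzel\`a-Ascoli. If the claim fails, there are $t_n \to \infty$ and $x_n \in M$ with $y_n := \Psi^{t_n}(x_n) \notin U$; by compactness, passing to a subsequence, $y_n \to y \in M \setminus U$. The backward curves $s \mapsto \Psi^{-s}(y_n)$ are defined on $[0, t_n]$ and uniformly Lipschitz since $X$ is bounded on the compact manifold $M$, so they subconverge on compact subsets to a continuous curve $\gamma \colon [0, \infty) \to M$; continuous dependence on initial conditions forces $\gamma(s) = \Psi^{-s}(y)$ for all $s \geq 0$, whence $y \in \mathcal{A}$. But by the $\alpha$-limit argument from (ii), every point of $\mathcal{A}$ lies in some $\mathscr{W}^u_j$, so $\mathcal{A} \subseteq \mathscr{C}_{\leq k} \subseteq U$, contradicting $y \notin U$.

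The main technical obstacle is the ordering step in (ii): one must exclude that $\mathscr{W}^u_j$ accumulates on a critical orbit of strictly higher $f$-value, and the Lyapunov bound $f \leq f(\mathcal{O}_j)$ along the entire trajectory of each $x_n$ is the essential ingredient making the Morse filtration survive under limits; once that is in hand, parts (i) and (iii) are routine consequences of the flow's smoothness and the compactness of $M$.
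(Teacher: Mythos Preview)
Your argument for (i) has a gap. You correctly write $\mathscr{W}^u_i = \bigcup_{t \geq 0} \Psi^t(L_i)$ and note that each $\Psi^{t_0}$ is a diffeomorphism of $M$ onto its image, but the assertion that pushing forward a chart of $L_i$ by $\Psi^{t_0}$ yields ``a smooth local parametrization of $\mathscr{W}^u_i$'' is precisely the embeddedness claim that needs proof: one must rule out that $\mathscr{W}^u_i$ has points arbitrarily close to $x$ lying in $\Psi^t(L_i)\setminus\Psi^{t_0}(L_i)$ for much larger $t$. An increasing union of embedded submanifolds is not automatically embedded. The fix is short and uses an ingredient you already stated: choose a neighborhood $W$ of $x$ small enough that $\Psi^{-t_0}(W)\subset V_i$; then any $y\in\mathscr{W}^u_i\cap W$ satisfies $\Psi^{-t_0}(y)\in\mathscr{W}^u_i\cap V_i=L_i$, hence $y\in\Psi^{t_0}(L_i)$. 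The paper devotes a full paragraph to exactly this step (via a contradiction argument), so contrary to your closing remark it is the least routine of the three parts.

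Your arguments for (ii) and (iii) are correct and take a different route from the paper. For (ii), the paper gives the closed-set characterization $\mathscr{C}_{\leq i}=\{x\in\mathcal{A}: f(\Psi^{-t}(x))\leq f(\mathcal{O}_i)\text{ for all }t\geq 0\}$ and appeals directly to Lemma~\ref{closed}; your sequential $\alpha$-limit argument with the Lyapunov bound $f\leq f(\mathcal{O}_j)$ along trajectories in $\mathscr{W}^u_j$ is more hands-on but equally valid. For (iii), the paper shows $\bigcap_{t\geq0}\Psi^t(M)=\mathscr{C}_{\leq k}$ and uses the nested-compact-sets argument, while you argue by contradiction via Arzel\`a--Ascoli; both ultimately rest on the identity $\mathcal{A}=\mathscr{C}_{\leq k}$. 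The paper's route is slicker in each case, while yours makes the dynamical mechanism more visible.
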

 
\begin{proof} 
We start by proving~\ref{one}.
Let $\partial_{r}^u$ be the unit radial vector field on $B(W^u_i) \setminus 0$ and let $\partial_{r}^s$ be the unit radial vector field on $B(W^s_i) \setminus 0$.
These vector fields pull back to vector fields on $\Gamma \times_{H_i} (B(W^u_i)\setminus 0  \times B(W^s_i) \setminus 0)$ which we denote by the same symbols.
The restriction of  $X$  to $V_i$ takes the form 
\begin{equation} \label{grad_field}
    (\gamma, u, v) \mapsto + 2 |u| \cdot \partial_{r}^u - 2  |v| \cdot \partial_{r}^s .
\end{equation}
For $1 \leq i \leq k$, define the $\Gamma$-invariant submanifold
\begin{equation} \label{defF}
     F^u_i := \varphi_i \big( \Gamma \times_{H_i} (B(W^u_i) \times 0) \big) \subset {M}. 
 \end{equation}
We claim that
\begin{equation} \label{fund_eq}
   \mathscr{W}^u_i = \bigcup_{n \in \N_0} \Psi^n ( F^u_i) .
\end{equation}
Indeed, on the one hand, it follows from \eqref{grad_field} that $\Psi^n(F^u_i) \subset \mathscr{W}^u_i$ for all $n$.
On the other hand, if $x \in \mathscr{W}^u_i$, then $\lim_{t \to -\infty} \Psi^{t} (x) \in \mathcal{O}_i$ and hence there exists $n_0 \in \N_0$ such that $\Psi^{-n}(x) \in V_i$ for all $n \geq n_0$.
Using  \eqref{grad_field}, this implies $\Psi^{-n_0}(x) \in F^u_i$. 
This shows  $x \in \Psi^{n_0}(F^u_i)$ and  finishes the proof of \eqref{fund_eq}.

We turn to the proof of  \ref{one}.
First, using \eqref{grad_field}, we observe that 
\begin{equation}\label{WVF}
\mathscr{W}^u_i \cap V_i = F^u_i , 
\end{equation}
which  is a submanifold of $\mathrm{int}(M)$. 
It remains to construct submanifold charts of $\mathscr{W}^u_i$ around each $x \in  \mathscr{W}^u_i \setminus \mathcal{O}_i$.
By \eqref{fund_eq} there exists  $n_0 \in \N$ with $x = \Psi^{n_0} (x_0)$ where $x_0 \in F^u_i \setminus \mathcal{O}_i$.

Since  $\Psi^{n_0} (F^u_i)$  is a smooth submanifold of $\mathrm{int}(M)$, there exists a submanifold chart defined on $W \subset \mathrm{int}(M)$ for $\Psi^{n_0} (F^u_i)$ with $x \in W$.
We claim that choosing $W$ small enough, this yields a submanifold chart for $\mathscr{W}^u_i$. 

Assume this is not the case. 
By \eqref{fund_eq} and since $\Psi^{n-1}(F^u_i) \subset \Psi^{n}(F^u_i)$ for all $n \in \N$, this implies that there exists a sequence $x_n \in \Psi^n(F^u_i) \setminus \Psi^{n-1}(F^u_i) \subset \mathscr{W}_i^u$, $n \in \N$, which subconverges to $x$ in ${M}$.
As $\Psi^{-n_0} (x) = x_0 \notin \partial {M}$, we can ensure, choosing $W$ small enough, that $\Psi^{-n_0}$ is defined on $W$.
Since  $\Psi^{-n_0} \colon W \to {M}$ is continuous, we can furthermore assume that $\Psi^{-n_0}(W) \subset V_i$.
Choose $n \geq n_0 +1 $ large enough such that $x_n \in W$.
Then, $\Psi^{-n_0} (x_n) \in V_i$.
Using $x_n \in \mathscr{W}^u_i$ and hence $\Psi^{-n_0} (x_n) \in \mathscr{W}^u_i$, \eqref{WVF} yields $\Psi^{-n_0}(x_n) \in F^u_i$.
Therefore, $x_{n} \in \Psi^{n_0}(F^u_i)$.
Since $\Psi^{n_0}(F^u_i) \subset \Psi^{n-1}(F^u_i)$ by the choice of $n$, this contradicts the choice of $x_{n}$.
The  proof of \ref{one} is hence complete.

To prove \ref{two}, it is enough to show that
\[
      \mathscr{C}_{\leq i} = \{x \in \mathcal{A}  \mid  f(\Psi^{-t}(x)) \leq f(\mathcal{O}_i) \text{ for all } t \in [0,\infty) \}. 
\]
Indeed, the right-hand side is a closed subset of $M$ as each $f \circ \Psi^{-t} \colon \mathcal{A} \to \R$ is continuous and $\mathcal{A} \subset M$ is closed by Lemma~\ref{closed}.
 
Clearly, the left-hand side is contained in the right-hand side.
Conversely, let $x \in \mathcal{A}$ satisfy   $f(\Psi^{-t}(x)) \leq f(\mathcal{O}_i)$ for all $t \geq 0$.
By compactness of $M$,  the sequence $\Psi^{-n}(x)$, $n \in \N$,  subconverges to some point $z \in M$ satisfying $f(z) \leq f(\mathcal{O}_i)$.
The point $z$ must be critical for $f$, say $z \in \mathcal{O}_j$ for some $1 \leq j \leq i$.
Hence, for every $n_0 \in \N$,  there exists $n \geq n_0$ with $\Psi^{-n}(x) \in V_j$. 
Using \eqref{grad_field}, this shows $x \in \mathscr{W}^u_j \subset  \mathscr{C}_{\leq i}$.

To prove \ref{three}, by the compactness of ${M}$, it is enough to show that 
\[
   \bigcap_{t \geq 0} \Psi^t({M}) = \mathscr{C}_{\leq k} . 
\]
By \eqref{fund_eq} and since $\Psi^{t}(F^u_i) \subset \Psi^s(F^u_i) \subset \Psi^t(M)$ for all $s\ge t\ge 0$ and all $i\in\{1,\dots,k\}$, the right-hand side is contained in the left-hand side.
Conversely, if $x \in  \bigcap_{t \geq 0} \Psi^t({M})$, then $\mathcal{I}_x = \R$, and, by the compactness of $M$, the sequence $\Psi^{-n}(x)$, $n \in \N$,  subconverges to a critical point $z \in M$.
By a similar argument as in the proof of \ref{two}, this implies $x \in \mathscr{C}_{\leq k}$. 
\end{proof}

\begin{remark} 
In Lemma~\ref{morse_flow}, 
\begin{enumerate}[\myicon]
   \item part~\ref{one} may be well known, but we could not find a suitable reference;
   \item part~\ref{two} is independent of the Morse-Smale condition;
   \item for trivial $\Gamma$, part~\ref{three} can be derived from \cite{CE}*{Lemma~9.18}.
\end{enumerate}
\end{remark}

\begin{lemma}[Extending the extension]\label{strati}  
Let $\mathscr{C} \subset M$ be a closed  $\Gamma$-invariant subset and let $U \subset M$ be a $\Gamma$-invariant open neighborhood of $\mathscr{C}$. 
Let 
\[
    G_U  \colon D^m \to \mathscr{R}^{\Gamma}_{>0}(U) 
\]
be a continuous map with $ G_U(\xi)  = g(\xi)|_U$ for  $\xi \in \partial D^m$, where $\mathscr{R}^{\Gamma}_{>0}(U)$ is equipped with the weak $C^{\infty}$-topology.

Let $\mathscr{W} \subset \mathrm{int}(M)$ be a $\Gamma$-invariant smooth, not necessarily compact, embedded submanifold without boundary such that $\dim \mathscr{W} \leq \dim M - 1$.

Then there are $\Gamma$-invariant open neighborhoods $U' \subset U$ of $\mathscr{C}$ and $V \subset \mathrm{int}(M)$ of $\mathscr{W}$ and a continuous map 
\[
    G_{U' \cup V} \colon D^m \to \mathscr{R}^{\Gamma}_{>0}(U' \cup V) 
\]
satisfying $G_{U' \cup V}(\xi)  = g(\xi)|_{U' \cup V}$ for $\xi \in \partial D^m$. 
\end{lemma}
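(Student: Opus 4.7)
The strategy is to first extend $G_U$ to a continuous family of (generically non-PSC) $\Gamma$-invariant metrics on all of $M$, and then to restore positive scalar curvature in a tubular neighborhood of $\mathscr{W}$ by a $\xi$-dependent conformal deformation, exploiting the codimension condition $\dim\mathscr{W}\leq\dim M-1$.

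\textbf{Auxiliary family.} By $\Gamma$-averaging, choose nested $\Gamma$-invariant open sets $\mathscr{C}\subset U'\subset\overline{U'}\subset U''\subset\overline{U''}\subset U$ and a $\Gamma$-invariant cutoff $\rho\in C^\infty(M,[0,1])$ with $\rho\equiv 1$ on $\overline{U''}$ and $\mathrm{supp}(\rho)\subset U$. Using that the space $\mathscr{R}^\Gamma(M)$ of all $\Gamma$-invariant Riemannian metrics on $M$ is convex, continuously extend $g$ to $\tilde g\colon D^m\to\mathscr{R}^\Gamma(M)$ (values not required to be PSC). Put
\[
    h(\xi) := \rho\cdot G_U(\xi) + (1-\rho)\cdot\tilde g(\xi),
\]
a continuous family of $\Gamma$-invariant metrics on $M$ with $h(\xi)=G_U(\xi)$ on $\overline{U''}$ and $h(\xi)=g(\xi)$ for $\xi\in\partial D^m$. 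By compactness of $D^m$, there is a uniform positive lower bound $\sigma$ on $\scal(h(\xi))$ over $\overline{U''}\times D^m$ and a uniform two-sided bound $C$ on $|\scal(h(\xi))|$ globally.

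\textbf{Conformal correction.} By the equivariant tubular neighborhood theorem, pick a $\Gamma$-invariant tubular neighborhood $V_0\subset\mathrm{int}(M)$ of $\mathscr{W}$. As $\mathscr{W}$ has codimension $\geq 1$, for small $\epsilon>0$ one can construct a $\Gamma$-invariant smooth function $\psi_\epsilon$ on $M$, supported in $V_0$, equal to $1$ on $\mathscr{W}$, and strictly concave in the normal direction, with $-\Delta_{h(\xi)}\psi_\epsilon\geq c/\epsilon^2$ on a $\Gamma$-invariant open neighborhood $V\subset V_0$ of $\mathscr{W}$, uniformly in $\xi\in D^m$ (e.g.\ $\psi_\epsilon(x)=\chi(\|v(x)\|^2/\epsilon^2)$ for a concave bump $\chi$ in normal-bundle coordinates). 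Fix a $\Gamma$-invariant cutoff $\tau\in C^\infty(M,[0,1])$ with $\tau\equiv 0$ on $U'$ and $\tau\equiv 1$ outside $U''$, and choose a continuous $\lambda\colon D^m\to[0,\Lambda]$ with $\lambda|_{\partial D^m}\equiv 0$. Setting $u(\xi):=1+\lambda(\xi)\,\tau\,\psi_\epsilon$ gives a $\Gamma$-invariant positive smooth function on $M$ with $u(\xi)\equiv 1$ on $U'\cup\partial D^m$. The conformal change formula
\[
   \scal\bigl(u^{4/(n-2)}h\bigr) = u^{-\frac{n+2}{n-2}}\Bigl(-\tfrac{4(n-1)}{n-2}\Delta_h u + \scal(h)\,u\Bigr)
\]
is positive on $V$ for suitably matched $\epsilon$ and $\Lambda$: on $V\setminus\overline{U''}$ one has $\tau\equiv 1$, so the Laplacian term of order $\lambda/\epsilon^2$ dominates the bounded scalar-curvature term; on $V\cap U''$, the uniform lower bound $\sigma>0$ of $\scal(h)$ absorbs the $O(\lambda/\epsilon)$ cross-terms from $\tau$'s derivatives in $\Delta_h(\tau\psi_\epsilon)$. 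Setting $G_{U'\cup V}(\xi):=u(\xi)^{4/(n-2)}h(\xi)|_{U'\cup V}$ completes the construction, with $G_{U'\cup V}(\xi)=g(\xi)|_{U'\cup V}$ for $\xi\in\partial D^m$.

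\textbf{Main obstacle.} The crux is the quantitative matching in the conformal correction: $\lambda$ must be large enough to dominate the possibly very negative $\scal(h)$ on $V\setminus\overline{U''}$ and, simultaneously, small enough to render harmless the cross-terms on $V\cap U''$. These two conditions are mutually compatible precisely when $\epsilon$ is chosen small enough, which is feasible since the positive codimension of $\mathscr{W}$ permits $\psi_\epsilon$ to be concentrated in an arbitrarily thin normal neighborhood with arbitrarily large negative Laplacian of order $1/\epsilon^2$. Continuity in $\xi$ and $\Gamma$-equivariance are built into the choices.
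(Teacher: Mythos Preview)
Your overall strategy---extend $G_U$ to a not-necessarily-PSC family $h$ on all of $M$, then conformally deform near $\mathscr{W}$ using the codimension hypothesis---matches the paper's. However, your execution has a genuine gap when $\mathscr{W}$ is not closed in $M$ (which is precisely the situation in the application to unstable manifolds).

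You require a smooth function $\psi_\epsilon$ on $M$, supported in a tubular neighborhood $V_0$ of $\mathscr{W}$, and equal to $1$ on $\mathscr{W}$. If $\mathscr{W}$ is not closed, no tubular neighborhood $V_0$ can contain a point of $\overline{\mathscr{W}}\setminus\mathscr{W}$: such a point would be a limit of points of the zero-section and hence, under the tubular-neighborhood diffeomorphism, would itself lie in $\mathscr{W}$. By continuity $\psi_\epsilon=1$ on $\overline{\mathscr{W}}$, while $\psi_\epsilon=0$ outside $V_0$; these requirements are incompatible at any $p\in\overline{\mathscr{W}}\setminus\mathscr{W}$. The same obstruction prevents choosing a uniform normal radius $\epsilon$ along $\mathscr{W}$, and with it your claimed uniform bound $-\Delta_{h(\xi)}\psi_\epsilon\ge c/\epsilon^2$.

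The paper avoids this by not insisting on a globally defined conformal factor. It uses the $\xi$-\emph{dependent} squared distance $r_\xi^2=\mathrm{dist}_{\bar G(\xi)}(\,\cdot\,,\mathscr{W})^2$, smooth only on some neighborhood $V$ of $\mathscr{W}$, and sets $\psi^\Lambda_\xi=-\Lambda\,\chi_2\,\delta_2(|\xi|)\,r_\xi^2$ there. Because $r_\xi$ is adapted to $\bar G(\xi)$, one gets the \emph{exact} value $\Delta^\xi r_\xi^2=-2(n-\dim\mathscr{W})$ along $\mathscr{W}$, independently of $\xi$ and of the point on $\mathscr{W}$; moreover $d\psi^\Lambda_\xi=0$ there, so the $|d\psi|^2$-term in the conformal formula drops out and no cross-term analysis is needed. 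Positivity is then checked \emph{only along} $\mathscr{W}$ (a single pointwise inequality fixing one constant $\Lambda$), and $V$ is shrunk afterwards by continuity and compactness of $D^m$---an argument that does not require $\mathscr{W}$ to be compact. The final metric is defined on $U'\cup V$ by gluing $G_U$ on $U'$ with the conformal deformation on $V$; no global extension of the conformal factor is ever needed.

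A minor additional point: your formula $u^{4/(n-2)}$ breaks down for $n=2$; the paper's use of $e^{2\psi}$ works in all dimensions.
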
 

\begin{proof} 
Since $\mathscr{C} \subset {M}$ is closed, hence compact, there exists an open $\Gamma$-invariant subset $U_1 \subset M$ with $\mathscr{C} \subset U_1\subset\bar{U}_1 \subset U$. 
Then $\bar{U}_1$ and $M \setminus U$ are disjoint $\Gamma$-invariant closed subsets of $M$.
Take a $\Gamma$-invariant smooth function $\chi_1 \colon M  \to [0,1]$ with 
\[
       \chi_1|_{U_1} \equiv 0 \quad\text{ and }\quad \chi_1|_{M \setminus U}  \equiv 1 .
\]

Let $\delta_1 \colon [0,1] \to [0,1]$ be a continuous function which is equal to $1$ near $0$ and equal to $0$ near $1$.
Denote by $\mathscr{R}^{\Gamma}(M)$ the space of $\Gamma$-invariant Riemannian metrics on $M$ equipped with the $C^{\infty}$-topology.
Choose some $g_0 \in \mathscr{R}^{\Gamma}(M)$ and define a continuous map $\bar G \colon D^m \to \mathscr{R}^{\Gamma}(M)$ by
\[
     \bar G(\xi) = \begin{cases} (1-\chi_1) G_U(\xi)  + \chi_1\big(\delta_1(|\xi|) g_0 + (1-\delta_1(|\xi|)) g\left( \nicefrac{\xi}{|\xi|}\right) \big), & \xi \neq 0 , \\
                                                              (1-\chi_1) G_U(\xi) + \chi_1  g_0 , & \xi = 0.
                         \end{cases}                  
\]
We have  $\bar G(\xi) = g(\xi)$ for $\xi \in \partial D^m$ and $\bar G(\xi)|_{U_1} = G_{U}(\xi)|_{U_1}$ for $\xi \in D^m$.
By a conformal change of $\bar G(\xi)$ near $\mathscr{W}$, leaving $\bar G(\xi)$ unchanged near $\mathscr{C}$, we will turn the scalar curvature of $\bar G(\xi)$ positive near $\mathscr{W}$.

There exists an open $\Gamma$-invariant subset $U_2 \subset M$ with $\mathscr{C} \subset U_2\subset\bar{U}_2 \subset U_1$. 
Then $\bar{U}_2$ and $M \setminus U_1$ are disjoint $\Gamma$-invariant closed subsets of $M$.
Take a $\Gamma$-invariant smooth function $\chi_2 \colon M  \to [0,1]$ with 
\[
       \chi_2|_{U_2} \equiv 0 \quad\text{ and }\quad \chi_2|_{M \setminus U_1}  \equiv 1 .
\]

Since being of positive scalar curvature is an open condition, there exists $0 < r_0 < 1$ such that $\bar G(\xi) \in \RR^{\Gamma}_{>0}(M)$ for $r_0 \leq |\xi| \leq 1$. 
Let $0 < r_0 < r_1 < 1$ and let $\delta_2 \colon [0,1] \to [0,1]$ be a continuous function equal to $1$ on $[0, r_0]$ and equal to $0$ on $[r_1, 1]$.

For $\xi \in D^m$ and $x \in M$, put $r_\xi(x) =  \mathrm{dist}_{\bar G(\xi)}(x, \mathscr{W})$.
By Lemma~\ref{lem.tub.nbhd}, there exists a $\Gamma$-invariant open neighborhood $V \subset \mathrm{int}(M)$ of $\mathscr{W}$ such that, for each $\xi \in D^m$, the function $r_{\xi}^2$ is smooth on $V$.

Choose a constant $\Lambda > 0$ so large that 
\[
\scal_{\bar G(\xi)}(x) + 4\Lambda (n-1)(n-\dim\mathscr{W})>0
\] 
for all $\xi\in D^m$ and all $x\in M$. 
Here $n=\dim M \geq 2$.
For $\xi \in D^m$, define the $\Gamma$-equivariant smooth map $\psi^{\Lambda}_{\xi} \colon V \to \R$ by 
\begin{equation}\label{largeLambda}
     \psi^{\Lambda}_{\xi}(x)  :=   - \Lambda \cdot  \chi_2 (x) \cdot \delta_2( |\xi|) \cdot r^2_{\xi}(x). 
\end{equation}
The family $(\psi_\xi^\Lambda)$ depends continuously on $\xi$ in the weak $C^{\infty}$-topology.

Denote the (non-negative) Laplace operator with respect to $\bar G(\xi)$ by $\Delta^{\xi}$.
The function $\psi^{\Lambda}_\xi$ has the following properties:
\begin{enumerate}[(i)] 
  \item \label{equivone} $\psi^{\Lambda}_{\xi}|_{\mathscr{W}} = 0$;
  \item \label{equivzero} $d \psi^{\Lambda}_{\xi}|_{\mathscr{W}} = 0$; 
  \item \label{unchanged} $\psi^{\Lambda}_{\xi} (x) = 0$, if $x\in U_2$ or $|\xi| \geq r_1$;
  \item \label{lessequal} $\Delta^{\xi} \psi^{\Lambda}_{\xi}  \geq 0$ along $\mathscr{W}$;
  \item \label{verynegative} $\Delta^{\xi} \psi^{\Lambda}_\xi(x) = 2\Lambda (n - \dim \mathscr{W})$, if $|\xi| \leq r_0$ and $x \in \mathscr{W} \cap (M \setminus U_1)$. 
 \end{enumerate} 
The last two statements follow from the fact that along $\mathscr{W}$ we have $\Delta^\xi\psi^\Lambda_\xi = -\Lambda\cdot \chi_2(x)\cdot \delta_2(|\xi|)\cdot \Delta^\xi(r^2_\xi)=2\Lambda\cdot \chi_2(x)\cdot \delta_2(|\xi|)\cdot (n - \dim \mathscr{W})$ because $r^2_\xi=0$ and $d(r^2_\xi)=0$ hold along $\mathscr{W}$.
 
We define the continuous map
\[
  G^\Lambda \colon D^m \to  \mathscr{R}^{\Gamma}(V),\quad G^{\Lambda}(\xi) := e^{2\psi^{\Lambda}_{\xi}} \cdot \bar G(\xi) .
\]
From \cite{Besse}*{Theorem~1.159~(f)} we recall the transformation formula for the scalar curvature under conformal change of metrics:
\[
\scal_{G^{\Lambda}(\xi)} = e^{-2\psi^{\Lambda}_{\xi}} \cdot \left( \scal_{\bar G(\xi)} + 2(n-1)\Delta^{\xi} \psi^{\Lambda}_{\xi} - (n-2)(n-1)\big|d\psi^\Lambda_\xi\big|^2\right) .
\]
Now we observe:
\begin{enumerate}[\myicon] 
\item For $\xi \in D^m$, since  $\bar{G}(\xi)|_{U_1} = G_U(\xi)|_{U_1} \in \mathscr{R}^{\Gamma}_{>0}(U_1)$, Properties~\ref{equivzero} and ~\ref{lessequal} imply that $G^\Lambda(\xi)$ has positive scalar curvature along $\mathscr{W} \cap U_1$.
\item For $r_0 \leq |\xi| \leq 1$, since  $\bar G(\xi) \in \mathscr{R}^{\Gamma}_{>0}(M)$,  Properties~\ref{equivzero} and  \ref{lessequal} imply that $G^\Lambda(\xi)$ has positive scalar curvature along $\mathscr{W}$.
\item For $0 \leq |\xi| \leq r_0$, Properties~\ref{equivzero} and  \ref{verynegative} together with \eqref{largeLambda} imply that $G^{\Lambda}(\xi)$ has positive scalar curvature along $\mathscr{W} \cap (M \setminus U_1)$.
\item Property~\ref{unchanged} implies that $G^\Lambda(\xi)|_{U_2} = \bar G(\xi)|_{U_2}$ for $\xi \in D^m$ and that $G^{\Lambda}(\xi)|_V = \bar G(\xi)|_V = g(\xi)|_V$ for $\xi \in \partial D^m$. 
\end{enumerate} 

Passing to a smaller $V$, we can therefore set $U' := U_2$ and define 
\[
G_{U' \cup V} \colon D^m \to \mathscr{R}_{>0}^{\Gamma} (U' \cup V)
\] 
by  
\[
  G_{U' \cup V}(\xi)(x) 
  := 
  \begin{cases} 
    G_U(\xi)(x), & x \in U' , \\ 
    G^{\Lambda}(\xi)(x), & x \in V . 
  \end{cases}  
\]
This concludes the proof of Lemma~\ref{strati}.
\end{proof} 

We will apply Lemma~\ref{strati} inductively to $\mathscr{W} := \mathscr{W}^u_i$, $1 \leq i \leq k$. 
The condition $\dim \mathscr{W}^u_i \leq \dim M-1$ will be ensured by Lemma~\ref{smalldim}, which is prepared by Lemmas~\ref{lem.checkZ.connected}, \ref{lem.compact.core} and \ref{lem.cancellation}. 

We work with the $\Gamma$-invariant Riemannian metric $\mu$ on $M$ introduced before Lemma~\ref{closed} and the associated gradient vector field $X=-\grad(f)\in C^\infty(TM)$.

Let $H < \Gamma$ be a closed subgroup such that $\Gamma/H$ is the maximal orbit type of $M$ and let $M_{\Reg} \subset M$ be the principal stratum.\footnote{Recall that $M_{(H)}$ is  the union of all $\Gamma$-orbits with isotropy group conjugate to $H$.}  
By \cite{Bredon}*{Theorem~IV.3.1}, $M_{\Reg}$ is open and dense in $M$.
Furthermore, the existence of $\Gamma$-invariant collar neighborhoods implies that $\Gamma/H$ is the maximal orbit type of $\partial M$ and that $M_{\Reg}$ is a manifold with boundary $\partial(M_{\Reg}) = (\partial M)_{\Reg}$.

Following \cite{Mayer}*{Definition~2.1}, we call the $\Gamma$-invariant Morse function $f$ {\em special} if for each critical orbit $\mathcal{O}_i$ of $f$, the isotropy group $H_i$ acts trivially on the unstable representation $W_i^u$.
In other words, the index of $f$ at $\mathcal{O}_i$ equals the index of $f|_{M_{(H_i)}}$ at $\mathcal{O}_i$.

\begin{lemma}[Connectivity of the principal quotient]\label{lem.checkZ.connected}
Assume that the $\Gamma$-invariant Morse function $f$ is special.
Let $c < 1$ be a regular value of $f$ such that the critical orbits of $f$ contained in
\[
 Z:=f^{-1}([c,1])
\]
are precisely the critical orbits contained in $M_{\Reg}$. 
Set
\[
 Z_{\Reg}:=Z\cap M_{\Reg},\qquad
 \partial_-Z:=f^{-1}(\{c\}),\qquad
 \partial_+Z:=f^{-1}(\{1\})=\partial M,
\]
and write $f_Z:=f|_Z$.
Then $\check Z:=Z_{\Reg}/\Gamma$ is path connected. 
Consequently, the inclusion $\partial_+\check Z\hookrightarrow \check Z$ is $0$-connected.
\end{lemma}

\begin{proof}
By the equivariant slice theorem, the orbit space $\check M := M_{\Reg}/\Gamma$ carries an induced structure of a smooth manifold.
This manifold, which is non-compact in general, has non-empty boundary $\partial \check M = (\partial M)_{\Reg}/\Gamma$.
Moreover, since $M$ is path connected, $\check M$ is path connected by \cite{Bredon}*{Theorem~IV.3.1}.

Consider the smooth manifold $\check Z := Z_{\Reg}/\Gamma$.
It is non-compact in general and has non-empty boundary $\partial \check Z = \partial_+ \check Z \cup \partial_- \check Z$, where
\[
   \partial \check Z = (\partial Z)_{\Reg}/\Gamma = \partial_+ \check Z \cup \partial_- \check Z,
   \quad \partial_{\pm}\check Z = (\partial_\pm Z)_{(H)}/\Gamma,
   \quad \partial_+ \check Z = \partial \check M.
\]

Let $c' < c$ be a regular value of $f$ such that $f^{-1}([c',c]) \subset M$ contains exactly one critical orbit of $f$.
This critical orbit is of type $\Gamma/H'$ for some closed subgroup $H' < \Gamma$ and, by the choice of $c$, is disjoint from $M_{\Reg}$.
The compact smooth $\Gamma$-invariant submanifold $Z':=f^{-1}([c',1]) \subset M$ is obtained from $Z$ by attaching a $\Gamma$-handle
\[
 \Gamma\times_{H'}\bigl(\bar B(W^u)\times \bar B(W^s)\bigr)
\]
along a $\Gamma$-equivariant smooth embedding
\[
 \Gamma\times_{H'}\bigl(\bar B(W^u)\times S(W^s)\bigr)\hookrightarrow \partial_-Z,
\]
where $H'$ acts trivially on $W^u$ because $f$ is special and $\bar B(-)$ denotes the closed unit ball.
Since $\Gamma/H'\times(\bar B(W^u)\times 0)=\Gamma\times_{H'}(\bar B(W^u)\times 0)$ does not contain orbits of type $\Gamma/H$, the inclusion
\[
 \left(\Gamma\times_{H'}\bigl(\bar B(W^u)\times S(W^s)\bigr)\right)_{(H)}
 \hookrightarrow
 \left(\Gamma\times_{H'}\bigl(\bar B(W^u)\times \bar B(W^s)\bigr)\right)_{(H)}
\]
is a strong $\Gamma$-deformation retract.
We conclude that $Z_{\Reg}\hookrightarrow Z'_{\Reg}$ is a $\Gamma$-homotopy equivalence.
By induction, the inclusion $Z_{\Reg}\hookrightarrow M_{\Reg}$ is a $\Gamma$-homotopy equivalence.
Thus $\check Z=Z_{\Reg}/\Gamma$ is path connected because $\check M=M_{\Reg}/\Gamma$ is path connected.
Since $\partial_+\check Z=\partial\check M\neq\emptyset$, the inclusion $\partial_+\check Z\hookrightarrow\check Z$ is $0$-connected.
\end{proof}

\begin{lemma}[A compact principal-orbit core]\label{lem.compact.core}
Under the assumptions and with the notation of Lemma~\ref{lem.checkZ.connected}, there exists a compact $\Gamma$-invariant smooth submanifold with corners $R\subset Z$ with the following properties:
\begin{enumerate}[(i)]
 \item all $\Gamma$-orbits in $R$ are of type $\Gamma/H$, and  all critical orbits of $f_Z$ are contained in the interior of $R$;
 \item the boundary decomposes into $\Gamma$-invariant smooth manifolds with boundary as $\partial R=\partial_+R\cup\partial_-R\cup\partial_\mathrm{vert}R$, where $\partial_{\pm}R\subset\partial_{\pm}Z$;
 \item \label{collar_in_R} the gradient field $X=-\grad(f)$ is tangential to $\partial_\mathrm{vert}R$.
\end{enumerate}
\end{lemma}

\begin{proof}
Let $Z_\mathrm{sing}:=Z\setminus Z_{(H)}$ be the union of singular orbits in $Z$.
This is a closed $\Gamma$-invariant subset of $Z$ which does not contain critical orbits of $f_Z$.
Hence the restriction of the gradient vector field $X$ of $-f$ to $Z_\mathrm{sing}$ does not have zeros.
There is therefore a $\Gamma$-invariant open neighborhood $U$ of $Z_\mathrm{sing}$ in $Z$ such that $X|_U$ does not have zeros.
Choose an open $\Gamma$-invariant subset $V\subset Z$ with $Z_\mathrm{sing}\subset V\subset\bar V\subset U$, take a $\Gamma$-invariant smooth cutoff function $\chi\colon Z\to[0,1]$ with $\chi|_V\equiv1$ and $\chi|_{Z\setminus U}\equiv0$, and define the smooth $\Gamma$-equivariant vector field $Y\in C^\infty(TZ)$ by
\begin{equation} \label{def_Y}
 Y(x):=
 \begin{cases}
  \chi(x)\dfrac{X(x)}{\langle X(x),X(x)\rangle},&x\in U,\\
  0,&x\in Z\setminus U.
 \end{cases}
\end{equation}
Let $\varphi^t\colon Z\to Z$ be the flow induced by $Y$.
Each $\varphi^t$ is $\Gamma$-equivariant and sends points in $Z_\mathrm{sing}$ to points in $Z_\mathrm{sing}$.
For $x\in\partial_+Z$, let $\mathcal I_x\subset[0,\infty)$ be the maximal interval such that $\varphi^t(x)$ is defined for all $t\in\mathcal I_x$.
By construction, $[0,1-c]\subset\mathcal I_x$ for all $x\in\partial_+Z$, and $\mathcal I_x=[0,1-c]$ for all $x\in(\partial_+Z)_\mathrm{sing} := \partial_+ Z \setminus (\partial_+ Z)_{(H)}= \partial_+ Z \cap Z_{\rm sing} $.

By continuous dependence of solutions of ODEs on initial values, there exists an open $\Gamma$-invariant neighborhood $W$ of $(\partial_+Z)_\mathrm{sing}$ in $\partial_+Z$ such that $\varphi^t(x)\in V$ for all $x \in W$ and $t\in[0,1-c]$.
In particular, $\varphi\colon W\times[0,1-c]\to Z$, $(x,t)\mapsto\varphi^t(x)$, is a $\Gamma$-equivariant smooth embedding and $\varphi(W\times\{1-c\})\subset\partial_-Z$.
Since, for $z \in Z_\mathrm{sing}$, we have $\varphi^{1-f(z)}(z) \in (\partial_+Z)_\mathrm{sing}$, we furthermore obtain that 
\begin{equation} \label{cover_all}
 Z_{\rm sing} = \varphi \big( (\partial_+Z)_\mathrm{sing} \times [0,1-c] \big) . 
 \end{equation}

Let $d\colon\partial_+Z\to\R_{\geq0}$ measure the distance to $(\partial_+Z)_\mathrm{sing}$ with respect to the Riemannian metric induced by $\mu$.
The function $d$ is $\Gamma$-invariant and $1$-Lipschitz.
There exists $\eps>0$ such that $d^{-1}([0,\eps])\subset W$.
Let $\delta\colon\partial_+Z\to\R_{\geq0}$ be a $\Gamma$-invariant smooth function satisfying $\|\delta-d\|_{C^0}<\frac\eps6$.
By Sard's theorem, there exists a regular value  $\xi\in(\frac\eps3,\frac{2\eps}3)$ of $\delta$.
Put
\begin{equation} \label{def:P} 
 P:=\delta^{-1}([0,\xi])\subset\partial_+Z. 
 \end{equation} 
Then $P$ is a $\Gamma$-invariant compact smooth submanifold of $\partial_+Z$ with smooth boundary such that $(\partial_+Z)_\mathrm{sing}\subset\mathrm{int}(P)$ and $P\subset W$.
In particular, $\partial P\subset(\partial_+Z)_{(H)}$.

Let $Q:=\varphi(P\times[0,1-c])\subset Z$.
Then $Q$ is a $\Gamma$-invariant compact smooth submanifold of $Z$ with corners which is diffeomorphic to $P\times[0,1-c]$.
Its boundary decomposes into compact smooth $\Gamma$-invariant manifolds with boundary as $\partial Q=\partial_+Q\cup\partial_-Q\cup\partial_\mathrm{vert}Q$, where
\[
 \partial_+Q:=\varphi(P\times\{0\}),\quad
 \partial_-Q:=\varphi(P\times\{1-c\}),\quad
 \partial_\mathrm{vert}Q:=\varphi(\partial P\times[0,1-c]).
\]
We have that $\partial_{\pm}Q\subset\partial_{\pm}Z$, $Q$ contains no critical orbits of $f_Z$, and $\partial_\mathrm{vert}Q\subset Z_{(H)}$.
Furthermore, $X$ is tangential to $\partial_\mathrm{vert}Q$ and, by \eqref{cover_all}, we have $Z_\mathrm{sing}\subset Q$.

Set $R:=Z\setminus(Q\setminus\partial_\mathrm{vert}Q)$.
Then $R$ is a $\Gamma$-invariant compact smooth submanifold of $Z$ with corners.
Its boundary decomposes into compact smooth $\Gamma$-invariant manifolds with boundary as $\partial R=\partial_+R\cup\partial_-R\cup\partial_\mathrm{vert}R$, where
\[
 \partial_+R=R\cap\partial_+Z,\qquad
 \partial_-R=R\cap\partial_-Z,\qquad
 \partial_\mathrm{vert}R=\partial_\mathrm{vert}Q.
\]
All $\Gamma$-orbits in $R$ are of type $\Gamma/H$,  all critical orbits of $f_Z$ are contained in the interior of $R$, and $X$ is tangential to $\partial_\mathrm{vert}R$.
\end{proof}

\begin{lemma}[Cancellation of the coindex-zero critical orbits]\label{lem.cancellation}
Under the assumptions and with the notation of Lemma~\ref{lem.checkZ.connected}, 
the function $f_Z$ can be replaced by a special $\Gamma$-invariant Morse function $\tilde f_Z\colon Z\to[c,1]$ with no critical orbits of coindex $0$ and without altering $f_Z$ near $\partial Z$.
\end{lemma}

\begin{proof}
The function $f_Z$ induces a Morse function $f_{\check Z}\colon\check Z\to[c,1]$ with finitely many critical points, which correspond to the critical orbits of $f_Z$.
In particular, the finitely many coindex-$0$ critical orbits of $f_Z$ correspond to finitely many coindex-$0$ critical points $p_1,\ldots,p_\ell\in\check Z$ of $f_{\check Z}$.
These points are precisely the local maxima of $f_{\check Z}$ in $\check Z\setminus\partial\check Z$.
Since, by Lemma~\ref{lem.checkZ.connected}, the inclusion $\partial_+\check Z\hookrightarrow\check Z$ is $0$-connected, there are smooth curves $\gamma_1,\ldots,\gamma_\ell\colon[0,1]\to\check Z$ such that $\gamma_i$ starts at $p_i$ and ends in $\partial_+\check Z$.

Let $R \subset Z$ be the submanifold constructed in Lemma~\ref{lem.compact.core}.
Since all $\Gamma$-orbits in $R$ are of type $\Gamma/H$, the quotient $\check R:=R/\Gamma$ has an induced structure of a smooth manifold with corners.
The boundary of $\check R$ decomposes into compact smooth manifolds with boundary as $\partial\check R=\partial_+\check R\cup\partial_-\check R\cup\partial_\mathrm{vert}\check R$, where
\[
 \partial_{\pm}\check R=(\partial_{\pm}R)/\Gamma,
 \qquad
 \partial_\mathrm{vert}\check R=(\partial_\mathrm{vert}R)/\Gamma.
\]
In other words, $\check R$ constitutes a bordism between the manifolds with boundary $\partial_- \check R$ and $\partial_+ \check R$.

The restriction of $f_Z$ to $R$ induces a Morse function $f_{\check R}\colon\check R\to[c,1]$ with no critical points on $\partial\check R$ and satisfying $f_{\check R}^{-1}(c) = \partial_- \check R$ and $f_{\check R}^{-1}(1) = \partial_+ \check R$.
The $\Gamma$-invariant Riemannian metric $\mu_R$ induced by $\mu$ descends to a Riemannian metric $\mu_{\check R}$ on $\check R$.
Furthermore, the gradient field $X_{\check R}$ of $-f_{\check R}$ with respect to $\mu_{\check R}$ is tangential to $\partial_\mathrm{vert}\check R$.

The critical points of coindex $0$ of $f_{\check R}$ are precisely $p_1,\ldots,p_\ell$.
Passing to a smaller $\eps > 0$ in the proof of Lemma~\ref{lem.compact.core}, we can assume that the curves $\gamma_i$ connecting $p_i$ with $\partial_+ \check R$ are contained in $\check R$.
Thus the inclusion $\partial_+\check R\hookrightarrow\check R$ is $0$-connected.

In this situation the coindex-$0$ critical points $p_1, \ldots, p_\ell$ can be cancelled against coindex-$1$ critical points of $f_{\check R}$. 
To achieve this, we will double the manifold $\check R$ along its vertical boundary $\partial_{\rm vert} \check R$ and use handle cancellation as described  in \cite{Milnor}.

We will use  the notation from the proof of Lemma~\ref{lem.compact.core}. 
Since $\partial_+Z$ is compact,  the set of regular values of $\delta$ is open and thus we find $ \xi < \xi'  < \tfrac{2\eps}{3}$ such that $[\xi, \xi']$ contains only regular values of $\delta$.
Then $\mathscr{U}  := \delta^{-1}([\xi, \xi')) \subset \partial_+ R$ is a  $\Gamma$-invariant collar neighborhood of the boundary $\partial (\partial_+ R) = \partial P$ in $\partial_+ R$ and $\delta$ induces a  $\Gamma$-equivariant collar structure $\mathscr{U} \approx \partial ( \partial_+ R) \times [\xi, \xi')$.

Set $\mathscr{V} := \varphi(\mathscr{U} \times [0,1-c]) \subset R$ where $\varphi^t$ is the flow of the vector field $Y$ defined in \eqref{def_Y}.
Note that $\varphi(\mathscr{U} \times \{1-c\})$ is a $\Gamma$-invariant collar neighborhood of $\partial(\partial_- R)$ in $\partial_- R$.
Altogether,  $\mathscr{V}$ is a $\Gamma$-invariant collar neighborhood of $\partial_{\rm vert}  R$ in  $R$ on which the Morse function $f_R$ takes the form $f_R = 1-t$.
Dividing out the $\Gamma$-action, we obtain a collar neighborhood $\check{\mathscr{V}} \approx \partial_{\rm vert} \check R \times [\xi, \xi')$ of $\partial_{\rm vert} \check R$ in $\check R$, inducing collar neighborhoods of $\partial(\partial_{\pm} \check R)$ in $\partial_{\pm} \check R$.
On $\check{\mathscr{V}}$, we have $f_{\check R} = 1-t$, which is independent of the collar coordinate.

These collar structures  induce smooth structures on the doubles 
\begin{align*} 
   \mathcal{D} \check R& := \check R \cup_{\partial_{\rm vert} \check R} \check R, \\
   \mathcal{D} \check{\mathscr{V}} &:=  \check{\mathscr{V}} \cup_{\partial_{\rm vert} \check R} \check{\mathscr{V}}, \\
   \mathcal{D} \partial_+ \check R & := \partial_+ \check R \cup_{\partial(\partial_+ \check R)} \partial_+ \check R, \\
   \mathcal{D} \partial_- \check R & := \partial_- \check R \cup_{\partial(\partial_- \check R)} \partial_- \check R.
   \end{align*}
Now, $(\mathcal{D} \check R; \mathcal{D} \partial_+ \check R, \mathcal{D} \partial_-\check R)$ is a smooth manifold triad in the sense of \cite{Milnor}*{Definition 1.3}.
Setting 
\[
   f_{\mathcal{D}\check R} := f_{\check R} \cup_{\partial_{\rm vert} \check R} f_{\check R} \colon \mathcal{D}\check R \to [c,1],
\]
we obtain a Morse function $-f_{\mathcal{D}\check R}$ on this manifold triad in the sense of \cite{Milnor}*{Definition 2.3}.
All critical points of $f_{\mathcal{D}\check R}$ lie outside of $\mathcal{D}\check{\mathscr{V}}$.
Put $\alpha := \xi' - \xi$.
We obtain an induced diffeomorphism  $\mathcal{D} \check{\mathscr{V}} \approx \partial_{\rm vert} \check R \times (-\alpha, \alpha)$, yielding a tubular neighborhood of $\partial_{\rm vert} \check R$ in $\mathcal{D} \check R$.

Let $r \in (-\alpha, \alpha)$ be the collar coordinate, let $h$ be some Riemannian metric on $\partial_{\rm vert} \check R$ and let  $\chi \colon \R \to [0,1]$ be a smooth cutoff function equal to $1$ on $[-\alpha/3, \alpha/3]$ and equal to $0$ on $(-\infty, - 2\alpha/3] \cup [2\alpha/3, \infty)$.
We obtain a smooth Riemannian metric on $\partial_{\rm vert} \check R \times (-\alpha, \alpha)$, defined by
\[
    \chi(r) \cdot (h + dr^2) + \big(1 - \chi(r)\big) \, ( \check \mu \cup  \check \mu).
\]
Extend this metric by $\check \mu \cup  \check \mu$ to a smooth Riemannian metric on $\mathcal{D} \check R$. 
By construction, the associated gradient field $X_{\mathcal{D}\check R}$ of $-f_{\mathcal{D}\check R}$ is tangential to $\partial_{\rm vert}\check R \times \{r\}$ for all $r \in (-\alpha/3, \alpha/3)$. 
In particular, all stable and unstable manifolds of critical points of $f_{\mathcal{D}\check R}$ have distance at least $\alpha/3$ to $\partial_{\rm vert}\check R$.

By construction, the inclusion $\mathcal{D} \partial_+ \check R \hookrightarrow \mathcal{D} \check R$ is $0$-connected. 
Applying \cite{Milnor}*{Theorem 8.1}, we cancel the critical points of index $0$ of  $- f_{\mathcal{D}\check R}$ against critical points of index $1$ of $-f_{\mathcal{D}\check R}$.
Note that the doubled function $- f_{\mathcal{D}\check R}$ has two copies of each $p_i$ as  critical points of index $0$.

We claim that we can assume that this cancellation process leaves  $f_{\mathcal{D}\check R}$ unaltered near $\partial_{\rm vert} \check R \, \cup  \,  \mathcal{D} \partial_{\pm} \check R  \subset \mathcal{D}\check R$.
First, rearrangement of critical points, as in \cite{Milnor}*{Theorem 4.1}, ensures that for critical points $q_1, q_2 \in \mathcal{D}\check R$ with ${\rm ind}(q_1) <  {\rm ind}(q_2)$, the value of $q_1$ is smaller than the one of $q_2$. 
This process alters $-f_{\mathcal{D}\check R}$ in an arbitrarily small neighborhood of the union of stable and unstable manifolds of critical points of $-f_{\mathcal{D}\check R}$ and away from $\mathcal{D} \partial_{\pm} \check R$.
Hence, this step can be assumed to leave  $f_{\mathcal{D}\check R}$ unaltered near $\mathcal{D} \partial_{\pm} \check R$ and on the $\alpha/6$-neighborhood of $\partial_{\rm vert} \check R$.
Second, as in the proof of \cite{Milnor}*{Theorem 8.1}, for any critical point $p$ of index $0$ of $-f_{\mathcal{D}\check R}$, we find a critical point $q$ of index $1$ of $-f_{\mathcal{D}\check R}$ such that $q$ is connected to $p$  by one and only one flow line of $X_{\mathcal{D}\check R}$. 
According to \cite{Milnor}*{Theorem 5.4}, the critical point $p$ can be cancelled against $q$ by altering $-f_{\mathcal{D}\check R}$ in an arbitrarily small neighborhood of this flow line. 
Since this flow line is disjoint from $\mathcal{D} \partial_{\pm} \check R$ and has distance at least $\alpha/6$ from $\partial_{\rm vert} \check R$, we can assume that this cancellation leaves $-f_{\mathcal{D}\check R}$ unaltered near  $\mathcal{D} \partial_{\pm} \check R$ and on the $\alpha/12$-neighborhood of $\partial_{\rm vert} \check R$.
By induction, we can eliminate all critical points of index $0$ of $-f_{\mathcal{D}\check R}$ without altering $f_{\mathcal{D}\check R}$ near $\partial_{\rm vert} \check R \cup \mathcal{D} \partial_{\pm} \check R$. 
This proves our claim.

The restriction of the resulting Morse function to $\check R$ coincides with $f_{\check R}$ near $\partial \check R=\partial_+\check R\cup\partial_-\check R\cup\partial_\mathrm{vert}\check R$ and does not contain critical points of coindex $0$.
We call this Morse function $f_{\check R}$ again.

Precomposing $f_{\check R}$ with the projection $R\to\check R$ defines a $\Gamma$-invariant Morse function $\tilde f_R \colon R \to [c,1]$ with no critical orbits of coindex $0$.
Since all orbits in $R$ are of type $\Gamma/H$, it is special. 
By construction, $\tilde f_R$ coincides with $f_{R}$ near $\partial R = \partial_\mathrm{vert} R \cup \partial_+ R\cup\partial_- R$.
Taking the union of $\tilde f_R$  with $f_Q$, we obtain the required special $\Gamma$-invariant Morse function $\tilde f_Z\colon Z\to[c,1]$ with no critical orbits of coindex $0$.
\end{proof}

\begin{lemma}[Dimension bound on unstable manifolds] \label{smalldim}
We can assume that $\dim \mathscr{W}^u_i \leq \dim M-1$ for all $1 \leq i \leq k$.
\end{lemma}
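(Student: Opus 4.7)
By \eqref{fund_eq} and \eqref{defF}, the unstable manifold $\mathscr{W}^u_i$ is the $\Psi^t$-saturation of $F^u_i$, hence has dimension $\dim(\Gamma/H_i) + \dim_\R W^u_i$. Combining this with the dimension formula \eqref{dimsum} yields
\[
\dim \mathscr{W}^u_i = \dim M - \dim_\R W^s_i,
\]
so the inequality $\dim \mathscr{W}^u_i \leq \dim M - 1$ holds for all $i$ if and only if $\dim_\R W^s_i \geq 1$ for all $i$. Geometrically, this means no critical orbit of $f$ is a transverse local maximum. The plan is therefore to replace $f$ by a $\Gamma$-invariant Morse function, satisfying all previously imposed properties, whose critical orbits all admit at least one stable normal direction.

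My plan is to build such an $f$ from the boundary inward, exploiting $\partial M = f^{-1}(\{1\}) \neq \emptyset$. I would first fix a $\Gamma$-equivariant collar embedding $\partial M \times [0, \delta) \hookrightarrow M$ (with $\Gamma$ acting trivially on the collar parameter) and set $f(x, s) := 1 - s$ on this collar; this has no critical points and realizes $\partial M$ as the top level set. On the complementary compact $\Gamma$-manifold with boundary $K := M \setminus (\partial M \times [0, \delta/2))$, I would then produce a $\Gamma$-invariant Morse function $f_K$ which matches $1 - s$ near $\partial K$, stays strictly below $1 - \delta/2$ on $\mathrm{int}(K)$, has pairwise distinct critical values, and has no ``top-type'' critical orbits (those with $W^s = 0$). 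Gluing $f_K$ with the collar function then produces the desired $f$, at which point only a small $\Gamma$-invariant perturbation is needed to ensure the critical values remain pairwise distinct (this does not change the indices).

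The main obstacle is producing $f_K$ without top-type critical orbits. The key observation is that if $\mathcal{O}_i$ is such an orbit, then by \eqref{grad_field} the entire Morse neighborhood $V_i = \varphi_i(\Gamma \times_{H_i} B(W^u_i))$ lies in $\mathscr{W}^u_i$, so $\mathscr{W}^u_i$ is an open $\Gamma$-invariant neighborhood of $\mathcal{O}_i$. A $\Gamma$-equivariant general-position argument then yields a $\Gamma$-equivariant tube $(\Gamma/H_i) \times [0, 1] \hookrightarrow K$ from $\mathcal{O}_i$ to $\partial K$, transverse to $\partial K$ and avoiding the other critical orbits; adding a $\Gamma$-invariant bump supported in a tubular neighborhood of this tube to $f_K$ raises the values of $f_K$ along the tube and cancels $\mathcal{O}_i$ against a non-critical point of $\partial K$. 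Iterating removes all top-type critical orbits. Making this cancellation precise in the equivariant setting, while preserving the boundary behavior and controlling the new critical set, is the technical heart of the argument; it follows the handle-cancellation scheme built into the equivariant Morse theory of \cites{Mayer, Wassermann}, adapted to the manifold-with-boundary setting.
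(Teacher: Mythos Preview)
Your reduction to the condition $\dim_\R W^s_i \ge 1$ and the overall strategy of eliminating local-maximum critical orbits are correct. However, the central step --- producing a $\Gamma$-equivariant embedded tube $(\Gamma/H_i)\times[0,1]\hookrightarrow K$ from $\mathcal{O}_i$ to $\partial K$ --- has a genuine gap. A $\Gamma$-equivariant \emph{embedding} of $(\Gamma/H_i)\times[0,1]$ maps each slice $\{t\}$ onto an orbit with isotropy exactly conjugate to $H_i$, so its image must lie entirely in the stratum of orbits of type $(H_i)$. When $\mathcal{O}_i$ is not a principal orbit, this stratum need not reach $\partial K$ at all. Concretely, let $\Gamma=\mathbb{Z}/2$ act on $M=D^2$ by $z\mapsto -z$; the fixed-point set is $\{0\}$ and the action on $\partial M=S^1$ is free. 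There are $\Gamma$-invariant Morse functions with $f^{-1}(1)=\partial D^2$ having an interior local maximum at $0$, so that $H_i=\mathbb{Z}/2$ acts by $-1$ on $W^u_i=\R^2$ and $W^s_i=0$; yet the $(H_i)$-stratum is the single point $\{0\}$, disjoint from $\partial K$. No general-position or bump-function argument can manufacture the tube, and the equivariant handle-cancellation results in \cites{Mayer, Wassermann} operate only within a fixed orbit-type stratum, so they do not supply the move you invoke.

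The paper circumvents this obstruction with a different idea. It first invokes \cite{Mayer}*{Satz~2.2} to replace $f$ by a \emph{special} $\Gamma$-invariant Morse function, meaning each $H_i$ acts trivially on $W^u_i$. This single step forces every critical orbit with $W^s_i=0$ to be principal: the slice $B(W^u_i)$ then carries the trivial $H_i$-action, so the whole Morse neighborhood $V_i$ consists of orbits of type $\Gamma/H_i$, and density of $M_{\Reg}$ makes $H_i$ principal. All local maxima therefore lie in $M_{\Reg}$, whose quotient $\check M=M_{\Reg}/\Gamma$ is a smooth connected manifold with nonempty boundary. The induced $\check f$ is an ordinary Morse function, its finitely many interior local maxima are removed by standard non-equivariant handle cancellation on a compact subset of $\check M$, and the modification lifts back $\Gamma$-equivariantly. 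The passage through special Morse functions is exactly the missing ingredient that converts the equivariant cancellation problem into a tractable non-equivariant one.
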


\begin{proof}
By \cite{Mayer}*{Satz~2.2} together with the remark at the end of Section~2 in \cite{Mayer}, we can assume that the $\Gamma$-invariant Morse function $f$ is special.
Therefore, every critical orbit outside $M_{\Reg}$ has $\dim W_i^s\geq1$.
Hence all critical orbits of coindex $0$, equivalently all local maxima of $f|_{\mathrm{int}(M)}$, lie in $M_{\Reg}$.

Using that $f$ is special, \cite{Hanke08}*{Lemma~12} applied to $G:=\Gamma$ and $H$ allows us, after rearranging the critical orbits, to choose a regular value $c<1$ such that the critical orbits in $Z:=f^{-1}([c,1])$ are precisely those contained in $M_{\Reg}$.
Lemmas~\ref{lem.checkZ.connected}, \ref{lem.compact.core}, and \ref{lem.cancellation} provide a special $\Gamma$-invariant Morse function $\tilde f_Z$ on $Z$ with no critical orbits of coindex $0$ which agrees with $f_Z$ near $\partial Z$.
Extending $\tilde f_Z$ by $f$ on $M\setminus Z$ eliminates all critical orbits of coindex $0$ without changing $f$ near $\partial M$.
We may therefore assume that $\dim W_i^s\geq1$ for all $i=1,\ldots,k$.

Finally,
\begin{align*}
\dim \mathscr{W}^u_i
&=
\dim F_i^u
\stackrel{\eqref{defF}}{=}
\dim W_i^u+\dim\Gamma-\dim H_i\\
&\stackrel{\eqref{dimsum}}{=}\dim M-\dim W_i^s
\leq
\dim M-1.
\qedhere
\end{align*}
\end{proof}

\begin{proof}[Conclusion of the proof of Theorem~\ref{mainA}]
Applying Lemma~\ref{strati} inductively to $\mathscr{W} := \mathscr{W}^u_i$ and $\mathscr{C} := \mathscr{C}_{\leq i-1}$ for $1 \leq i \leq k$ and using Lemma~\ref{smalldim} and Lemma~\ref{morse_flow}~\ref{one} and \ref{two}, we obtain an open $\Gamma$-invariant neighborhood $U \subset M$ of $\mathscr{C}_{\leq k}$ and  a continuous map 
\[
    G_U \colon D^m \to \RR^{\Gamma}_{>0}(U)
\]
with $G_U(\xi) = g(\xi)|_U$ for $\xi \in \partial D^m$. 

Let $t_0$ be as in Lemma~\ref{morse_flow}~\ref{three} for this $U$. 
Then $(\Psi^t)_{t \in [0 ,t_0]}$ yields a smooth isotopy with $\Psi^{t_0}(M)\subset U$.    
Now define the continuous map $G \colon D^m \to \RR^{\Gamma}_{>0} (M)$ by
\[
  G(\xi) := 
  \begin{cases}  
      (\Psi^{t_0})^*( G_U(2 \xi)), & \text{ for } 0 \leq |\xi| \leq \tfrac{1}{2} , \\
      (\Psi^{2t_0(1 - |\xi|) })^* \left( g\left(\nicefrac{\xi}{|\xi|}\right) \right), & \text{ for } \tfrac{1}{2} \leq |\xi| \leq 1 .
  \end{cases} 
\]
Then  $G|_{\partial D^m} = g$, concluding the proof of Theorem~\ref{mainA}. 
\end{proof}

\section{Scalar and mean curvature in Riemannian submersions}  
\label{sec.scalmeansub}

First we recall some basic facts about Riemannian submersions, largely following Section~9.B in \cite{Besse}.
Let $(M,g)$ and $(B, \check{g})$ be connected Riemannian manifolds, possibly with boundary, and let
\[
    \pi \colon (M, g) \to (B, \check{g}) 
\]
be a Riemannian submersion.
For $b \in B$, we denote by $M_b := \pi^{-1}(b) \subset M$ the fiber over $b$ and by $\hat g_b$ the Riemannian metric on $M_b$ induced by $g$.

Let  $\mathscr{V} = \ker d\pi \subset TM$ be the vertical subbundle and let  $\mathscr{H} = \mathscr{V}^{\perp_g} \subset TM$ be the horizontal subbundle of the submersion $\pi$.
We obtain an orthogonal decomposition
\[
    g = g_\mathscr{V} + g_\mathscr{H}. 
\]
We denote the orthogonal projections $TM\to \mathscr{H}$ and $TM\to \mathscr{V}$ by the same symbols $\mathscr{H}$ and $\mathscr{V}$, respectively.
Following \cite{Besse}*{Chapter~9}, we define the $(1,2)$-tensor field $A$ on $M$ by
\begin{align*}
A_XY &:= \mathscr{H}(\nabla^g_{\mathscr{H}X}\mathscr{V}Y)  + \mathscr{V}(\nabla^g_{\mathscr{H}X}\mathscr{H}Y).
\end{align*}
For horizontal vector fields $X, Y \in C^{\infty}(\mathscr{H})$, equation~(9.24) in \cite{Besse} says that 
\begin{equation} \label{tensorA} 
    A_X Y = \tfrac{1}{2}  \mathscr{V} [X,Y] . 
\end{equation} 
In our applications, we will mostly consider submersions with totally geodesic fibers. 
In this case, the O'Neill formula \cite{Besse}*{(9.70d)} says that the scalar curvature of $g$ is given by 
\[
     \scal_g = \scal_{\check g} \circ \pi + \scal_{\hat g} - |A|^2 . 
\]

Assume $\partial M \neq \emptyset$.
We only consider the cases where  the base or the fiber of the submersion $\pi$  have empty boundary.
Let $\mathcal{N} \in C^{\infty}(TM|_{\partial M})$ be the exterior unit normal of $\partial M \subset M$ with respect to $g$ and let $H_g \colon \partial M \to \R$ denote the (unnormalized) mean curvature of $\partial M \subset M$ with respect to $\mathcal{N}$.
Our sign convention is such that the boundary of the closed unit ball in $\R^n$ has mean curvature $n-1$.

\begin{proposition} \label{meancurv1}  
Let $\pi \colon (M, g) \to (B, \check{g})$ be a Riemannian submersion.
Assume $\partial B = \emptyset$ and let $\mathcal{N}$ be vertical, i.e., $\mathcal{N} \in C^{\infty}(\mathscr{V}|_{\partial M})$. 
Let $b \in B$ and let  $H_{\hat g_b} \colon \partial M_b \to \R$ denote the (unnormalized) mean curvature of $\partial M_b \subset (M_b, \hat g_b)$ with respect to the exterior unit normal.
Then we have 
\[
	H_{g}|_{\partial M_b}=  H_{\hat g_b} . 
\]
\end{proposition}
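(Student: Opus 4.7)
The plan is to compute $H_g$ pointwise on $\partial M_b$ using a $g$-orthonormal frame adapted to the splitting $TM=\mathscr{V}\oplus\mathscr{H}$, identify the vertical part of the resulting sum with $H_{\hat g_b}$, and show that the horizontal part vanishes via the skew-symmetry of the O'Neill tensor $A$.

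Fix $p\in\partial M_b$ and set $k:=\dim M_b$, $n:=\dim M$. First I would check that $\mathcal{N}|_p$ is the exterior $\hat g_b$-unit normal of $\partial M_b\subset M_b$: it is vertical, hence tangent to $M_b$; it has unit length for $g$, and $\hat g_b$ is the restriction of $g$ to $\mathscr{V}$; and being outward-pointing for $\partial M\subset M$ in the vertical direction is the same as being outward-pointing for $\partial M_b\subset M_b$. Using $\mathcal{N}\in\mathscr{V}_p$, the splitting $T_pM=\mathscr{V}_p\oplus\mathscr{H}_p$ restricts to $T_p\partial M=T_p\partial M_b\oplus\mathscr{H}_p$, so I may choose a $g$-orthonormal frame $e_1,\dots,e_{n-1}$ of $T_p\partial M$ with $e_1,\dots,e_{k-1}$ an orthonormal basis of $T_p\partial M_b\subset\mathscr{V}_p$ and $e_k,\dots,e_{n-1}$ an orthonormal basis of $\mathscr{H}_p$. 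Then
\[
H_g(p)=\sum_{i=1}^{n-1} g\bigl(\nabla^g_{e_i}\mathcal{N},e_i\bigr)=\sum_{i=1}^{k-1} g\bigl(\nabla^g_{e_i}\mathcal{N},e_i\bigr)+\sum_{i=k}^{n-1} g\bigl(\nabla^g_{e_i}\mathcal{N},e_i\bigr).
\]

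For the vertical terms I would use that, since $\hat g_b$ is the restriction of $g$, the Levi-Civita connection $\hat\nabla$ of the fiber coincides with the vertical projection of $\nabla^g$ on vertical vector fields; the possibly non-zero horizontal component (the second fundamental form of the fiber) is killed after pairing with a vertical vector. Hence $g(\nabla^g_{e_i}\mathcal{N},e_i)=\hat g_b(\hat\nabla_{e_i}\mathcal{N},e_i)$ for $1\leq i\leq k-1$, and the first sum is exactly $H_{\hat g_b}(p)$.

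The main step is the vanishing of each horizontal summand; this is where the submersion structure enters. Extending $e_i$ ($k\leq i\leq n-1$) to a horizontal vector field $X$ in a neighborhood of $p$ in $\partial M$ and differentiating $g(\mathcal{N},X)\equiv 0$ in the direction $X$ gives $g(\nabla^g_X\mathcal{N},X)=-g(\mathcal{N},\nabla^g_X X)$. Since $\mathcal{N}$ is vertical, only $\mathscr{V}(\nabla^g_X X)$ contributes on the right. For horizontal $X$ the definition of $A$ reduces to $A_X X=\mathscr{V}(\nabla^g_X X)$, and by \eqref{tensorA}, $A_X X=\tfrac12\mathscr{V}[X,X]=0$. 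Thus every horizontal summand vanishes, and combining with the vertical contribution yields $H_g|_{\partial M_b}=H_{\hat g_b}$. The only conceptual content is the antisymmetry $A_XY=-A_YX$ for horizontal arguments; the rest is bookkeeping with the adapted frame.
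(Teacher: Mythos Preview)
Your proof is correct and follows essentially the same approach as the paper: split the trace of the second fundamental form along $T\partial M = T\partial M_b \oplus \mathscr{H}$, identify the vertical block with $\II^{\hat g_b}$, and kill the horizontal contribution using the skew-symmetry $A_XY=-A_YX$ of the O'Neill tensor. The only cosmetic difference is that the paper records the full block form of $\II^g$ (showing $\II^g(X,Y)=-\langle A_XY,\mathcal{N}\rangle=0$ for all horizontal $X,Y$) before tracing, whereas you go straight to the diagonal terms via $A_XX=0$; your route is marginally more direct since only the trace is needed.
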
 

\begin{proof}
By assumption, we have $\mathcal{N}|_{\partial M_b} \in C^{\infty}(TM_b|_{\partial M_b})$.
Since $\mathcal{N} \perp_g T \partial M_b$, this implies that $\mathcal{N}|_{\partial M_b}$ is the exterior unit normal of $\partial M_b \subset (M_b, \hat g_b)$.

Let $\II^g$ be the second fundamental form of $\partial M$ in $M$ and let $\II^{\hat g_b}$ be the second fundamental form of $\partial M_b$ in $M_b$, both with respect to $\mathcal{N}$.
Along $\partial M_b$, we find for vertical tangent vectors $X, Y \in T\partial M_b$ that
\[
\II^g(X,Y) = \langle \nabla^g_X\mathcal{N},Y\rangle_g = \langle \nabla^{\hat g_b}_X\mathcal{N},Y\rangle_{\hat g_b} = \II^{\hat g_b}(X,Y) .
\]
Using \cite{Besse}*{(9.21d)}, we find for $X, Y \in \mathscr{H}$ along $\partial M$ that
\[
\II^g(X,Y) 
= \langle \nabla^g_X\mathcal{N},Y\rangle_g 
= \langle \mathscr{H}(\nabla^g_X\mathcal{N}),Y\rangle_g 
= \langle A_X\mathcal{N},Y\rangle_g 
= - \langle A_XY,\mathcal{N}\rangle_g .
\]
By \eqref{tensorA}, $A_XY$ is skew-symmetric in $X$ and $Y$ while $\II^{g}(X,Y)$ is symmetric.
Hence, $\II^g(X,Y) = 0$.

Summarizing, with respect to the orthogonal splitting $T\partial M = T\partial M_b \oplus \mathscr{H}$, the second fundamental form takes the block form
\[
\II^g = \begin{pmatrix} \II^{\hat g_b} & * \\ * & 0 \end{pmatrix} .
\]
Taking traces concludes the proof.
\end{proof}

For $\tau > 0$ let 
\begin{equation} \label{can_var}
    g_\tau  = \tau  g_{\mathscr{V}} + g_{\mathscr{H}}  
\end{equation}
be the canonical variation of $g$, see \cite{Besse}*{Definition~9.67}.
Then $\pi \colon (M, g_{\tau}) \to (B, \check{g})$ is still a Riemannian submersion with the same vertical and horizontal subbundles as $(M,g) \to (B, \check{g})$. 

Tensorial quantities with respect to $g_\tau$ are indicated by a superscript $\tau$. 
Using the formulas in \cite{Besse}*{Section~9.G}, we obtain $|A^\tau|_{g_\tau}^2 = \tau \cdot |A|^2_g$.
Since $\scal_{\widehat{g_\tau}} = \frac{1}{\tau} \scal_{\hat g}$, we obtain for submersions with totally geodesic fibers that
\begin{equation} \label{oneill} 
     \scal_{g_\tau} = \scal_{\check g} \circ \pi +  \tfrac{1}{\tau} \scal_{\hat g} - \tau |A|^2 , 
\end{equation} 
cf.\ \cite{Besse}*{(9.70d)}.
In particular, if $\scal_{\hat g} \geq 0$, then for all $0 < \tau \leq 1$, we get $\scal_{g_\tau} \geq \scal_g$.

\begin{proposition} \label{meancurv2} 
Let $\pi \colon (M, g) \to (B, \check{g})$ be a Riemannian submersion.
Let $\partial M_b  =  \emptyset$ for all $b \in B$ and let $\mathcal{N}$ be horizontal, i.e., $\mathcal{N} \in C^{\infty}( \mathscr{H}|_{\partial M})$.
Then, for  $\tau > 0$, we have 
\[
    H_{g_\tau} = H_{g} .
\]
\end{proposition}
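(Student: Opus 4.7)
The plan is to exploit that, because $\mathcal{N}$ is horizontal and $g_\tau$ agrees with $g$ on $\mathscr{H}$, the vector $\mathcal{N}$ remains the outward $g_\tau$-unit normal, and then to compare the $g_\tau$- and $g$-traces of the second fundamental form in a frame adapted to the horizontal/vertical splitting. Since every fiber $M_b$ is closed, $\partial M = \pi^{-1}(\partial B)$, and at each boundary point $T\partial M$ decomposes $g$-orthogonally (and also $g_\tau$-orthogonally, by invariance of the splitting) as $T\partial M = \mathscr{V}|_{\partial M}\oplus\mathscr{H}'$, where $\mathscr{H}'$ is the horizontal lift of $T\partial B$. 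I would choose a $g$-orthonormal frame $(e_1,\dots,e_{n-1})$ of $T\partial M$ with $e_1,\dots,e_p\in\mathscr{H}'$ and $e_{p+1},\dots,e_{n-1}\in\mathscr{V}$; rescaling the vertical part by $\tau^{-1/2}$ gives a $g_\tau$-orthonormal frame, whence
\[
H_{g_\tau} = \sum_{i=1}^p \II^{g_\tau}(e_i,e_i) + \frac{1}{\tau}\sum_{j=p+1}^{n-1} \II^{g_\tau}(e_j,e_j).
\]

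The proof then reduces to two pointwise identities: \emph{(a)} $\II^{g_\tau}(X,X) = \II^g(X,X)$ for horizontal $X\in T\partial M$, and \emph{(b)} $\II^{g_\tau}(V,V) = \tau\,\II^g(V,V)$ for vertical $V\in T\partial M$. Writing $\II^h(Y,Y) = -h(\nabla^h_Y Y,\mathcal{N})$ and noting that $\mathcal{N}$ is horizontal while $g_\tau$ equals $g$ on $\mathscr{H}$, each identity becomes a statement about the horizontal component $\mathscr{H}\nabla^{g_\tau}_Y Y$, which I would extract by testing Koszul's formula against an arbitrary horizontal field $W$. In case (a), all three arguments are horizontal, so $g_\tau$ and $g$ agree on each Koszul term, giving $\mathscr{H}\nabla^{g_\tau}_X X = \mathscr{H}\nabla^g_X X$. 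In case (b), every surviving Koszul term (after using integrability of $\mathscr{V}$ to discard the horizontal component of $[V,V]$, which vanishes) involves a $g_\tau$-pairing of two vertical vectors, hence acquires a single factor $\tau$ relative to the analogous $g$-term, yielding $\mathscr{H}\nabla^{g_\tau}_V V = \tau\,\mathscr{H}\nabla^g_V V$ and thus (b).

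Plugging (a) and (b) back into the displayed expression, the factor $\tau^{-1}$ from the frame rescaling cancels the factor $\tau$ from (b), and the remaining sum is exactly $H_g = \sum_i \II^g(e_i,e_i) + \sum_j \II^g(e_j,e_j)$.

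The main obstacle will be the bookkeeping in the Koszul computation for (b): one needs to verify that exactly one factor of $\tau$ emerges uniformly from each of the three Koszul terms, and this is precisely where the asymmetric scaling $g_\tau|_{\mathscr{V}} = \tau\, g|_{\mathscr{V}}$ versus $g_\tau|_{\mathscr{H}} = g|_{\mathscr{H}}$ does the real work. The auxiliary observation that $\mathcal{N}$ is still the outward $g_\tau$-unit normal is then almost immediate from the orthogonality of the splitting together with $|\mathcal{N}|_{g_\tau} = |\mathcal{N}|_g = 1$.
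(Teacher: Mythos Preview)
Your proof is correct and takes essentially the same approach as the paper's: both split $T\partial M=\mathscr{V}\oplus(\mathscr{H}\cap T\partial M)$ and show that the horizontal and vertical diagonal contributions to the mean curvature are separately $\tau$-independent. The only difference is packaging---the paper works with the Weingarten map $W^\tau=\nabla^{g_\tau}_\bullet\mathcal{N}$ and argues that its diagonal blocks do not depend on $\tau$ (using that $d\pi(\nabla^{g_\tau}_X\mathcal{N})=\nabla^{\check g}_{d\pi X}d\pi\mathcal{N}$ for horizontal $X$, and a fiber-connection observation for the vertical block), whereas you verify the equivalent identities $\II^{g_\tau}(X,X)=\II^g(X,X)$ and $\II^{g_\tau}(V,V)=\tau\,\II^g(V,V)$ directly via Koszul; your route is slightly more explicit but otherwise the same argument.
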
 
 
\begin{proof}
Consider the Weingarten map $W^\tau = \nabla^{g_\tau}_{\bullet}\mathcal{N}\colon T\partial M\to T\partial M$ of $\partial M$ in $M$ with respect to $g_\tau$.
For vertical vector fields $X$ and $Y$ we have, by the Koszul formula for the Levi-Civita connection and the fact that $\mathcal{N}$ is horizontal and $[X,Y]$ is vertical, that 
\begin{align*}
2g_\tau (\nabla^{g_\tau}_X\mathcal{N},Y) 
&= 
X(g_\tau(\mathcal{N},Y)) + \mathcal{N}(g_\tau(X,Y)) - Y(g_\tau(X,\mathcal{N})) \\
&\quad + g_\tau ([X,\mathcal{N}],Y) - g_\tau([X,Y],\mathcal{N}) - g_\tau([\mathcal{N},Y],X) \\
&=
\tau \big(\mathcal{N}(g_1(X,Y))+ g_1 ([X,\mathcal{N}],Y) - g_1([\mathcal{N},Y],X) \big) .
\end{align*}
In particular, for $\tau=1$ we have 
\[
2g_1 (\nabla^{g_1}_X\mathcal{N},Y) 
= 
\mathcal{N}(g_1(X,Y))+ g_1 ([X,\mathcal{N}],Y) - g_1([\mathcal{N},Y],X) .
\]
Inserting this back into the previous equation, we find that 
\[
g_\tau (\nabla^{g_\tau}_X\mathcal{N},Y) 
= 
\tau g_1 (\nabla^{g_1}_X\mathcal{N},Y)
= 
g_\tau (\nabla^{g_1}_X\mathcal{N},Y).
\]
Therefore $\mathscr{V}\circ W^{\tau}\circ\mathscr{V}=\mathscr{V}\circ W^{1}\circ\mathscr{V}$.
For $X\in \mathscr{H}\cap T\partial M$ we find
\[
(\mathscr{H}\circ W^\tau) X = \mathscr{H}(\nabla^{g_\tau}_X\mathcal{N}) = \mathscr{H}(\nabla^{g}_X\mathcal{N}) 
\]
because $d\pi(\nabla^{g_\tau}_X\mathcal{N}) = \nabla^{\check g}_{d\pi(X)}d\pi(\mathcal{N})$ is independent of $\tau$.
Hence, with respect to the splitting $T\partial M = \mathscr{V}\oplus (\mathscr{H}\cap T\partial M)$, the Weingarten map $W^\tau$ takes the block form
\[
W^\tau 
= 
\begin{pmatrix} \mathscr{V}\circ W^{1}\circ\mathscr{V} & * \\ * & \mathscr{H}(\nabla^{g}_\bullet\mathcal{N}) \end{pmatrix} 
\]
with diagonal blocks independent of $\tau$.
Taking the trace concludes the proof.
\end{proof}
 
We describe an example of a Riemannian submersion which will be of importance later on.

\begin{example} \label{ex.vectorbundle}
Let $\pi\colon \nu\to B$ be a real vector bundle equipped with a Riemannian fiber metric and a compatible connection.

Given a smooth curve $c\colon [0,1]\to B$ and an element $\eta_0\in \nu_{c(0)}$, parallel transport (with respect to the connection) of $\eta_0$ along~$c$ yields a curve $\eta\colon [0,1]\to \nu$ with $\eta(0)=\eta_0$.
The tangent vectors $\dot{\eta}(0)$ obtained in this manner form a subspace of $T_{\eta_0}\nu$ which we denote by $\mathscr{H}_{\eta_0}$.

Denote the zero-section of $\nu$ by $B_0$.
Since parallel translates of a zero-vector are zero-vectors, $\mathscr{H}$ coincides with the tangent bundle of $B_0$ along $B_0$.
Since the connection is metric, parallel sections have constant length and hence $\mathscr{H}$ is always tangential to the $r$-sphere subbundles of $\nu$ where $r>0$.
Since the connection is linear, the distribution $\mathscr{H}$ is invariant under dilations $\nu\to\nu$, $\eta\mapsto r\eta$.

Denoting the tangent spaces of the fibers of $\pi$ by $\mathscr{V}$, we have $T\nu = \mathscr{V}\oplus \mathscr{H}$.
Thus $\pi\colon\nu\to B$ is a submersion with horizontal distribution $\mathscr{H}$.
In order to turn it into a Riemannian submersion, we equip the total space $\nu$ with a Riemannian metric $g$ such that $\mathscr{H}$ is orthogonal to $\mathscr{V}$.
The metric on $\mathscr{H}$ needs to be chosen exactly such that $d\pi\colon \mathscr{H}\to TB$ is a linear isometry at each point of $\nu$.
Hence, it is uniquely determined by the metric $\check{g}$ on $B$.

A canonical choice for the metric on $\mathscr{V}$ is to take the metric induced from the Euclidean vector space structure of the fibers of $\nu$ via the canonical isomorphism $\mathscr{V}_\eta = T_\eta(\nu_{\pi(\eta)}) \cong \nu_{\pi(\eta)}$.
More generally, we can fix any $\O(k)$-invariant Riemannian metric $g_0$ on $\R^k$ where $k$ is the rank of $\nu$.
For $p\in B$ we choose a linear isometry $\nu_p \to \R^k$ and define the metric on the fiber $\nu_p$ to be the pullback of $g_0$ via this isometry.
By $\O(k)$-invariance, this Riemannian metric on $\nu_p$ is independent of the choice of the linear isometry.

Since the connection is metric, parallel transport along any curve in $B$ yields a linear isometry between the fibers of $\nu$ w.r.t.\ the Euclidean metric and hence also a Riemannian isometry w.r.t.\ the metric induced by $g_0$.
This implies, by an argument due to Vilms (see \cite{Vilms}*{p.~78}), that the fibers $\nu_p$ are totally geodesic submanifolds of $\nu$.

If the bundle $\nu$ is oriented, then the group $\O(k)$ can be replaced by the group $\SO(k)$ throughout this discussion.
\end{example}

In our applications, the situation described in the previous example arises as follows:

\begin{example}  \label{ex.normalbundle}

Let $X$ be a manifold and let $B\subset X$ be a submanifold.
Consider the \emph{abstract normal bundle} $\pi\colon\nu := (TX|_B)/TB \to B$.

Given a Riemannian metric $g_X$ on $X$, we obtain an induced metric $g_B$ on $B$ and the \emph{geometric normal bundle} $\nu^g$ whose fibers are the orthogonal complements of $TB$ in $TX|_B$ with respect to $g_X$.
The composition $\nu^g \hookrightarrow TX|_B \twoheadrightarrow \nu$ is a vector bundle isomorphism.
Via this isomorphism, $\nu$ inherits a fiberwise scalar product turning it into a Riemannian vector bundle.
The Levi-Civita connection of $g_X$ induces a metric connection on $\nu^g$ and hence on $\nu$, called the \emph{normal connection}.
\end{example}

For oriented real vector bundles of rank two, there is the following useful construction of linear connections and Riemannian submersions with totally geodesic fibers.

\begin{example} \label{ex.circlebundle}
Let $\nu\to B$ be a real vector bundle of rank $2$ equipped with a Riemannian fiber metric. 
Furthermore, we assume that the bundle $\nu$ is oriented.

Any rotation matrix $\begin{pmatrix}\cos\theta & -\sin\theta \\ \sin\theta & \cos\theta \end{pmatrix} \in \SO(2)$ acts on the fibers $\nu_p$ of $\nu$ by rotating about the angle $\theta$ in the direction determined by the orientation.
This yields an  $\SO(2)$-action by vector bundle automorphisms on $\nu$ which is free outside the zero section.

Fix $r > 0$ and let $S_r(\nu) \to B$ be the $r$-sphere bundle of $\nu$.
The $\SO(2)$-action restricts to a free, fiber-transitive action on $S_r(\nu)$ turning $S_r(\nu)$ into an $\SO(2)$-principal bundle.
The map $S_r(\nu)\times\R^2 \to \nu$ given by $(v,(x_1,x_2))\mapsto \frac{1}{r}(x_1v+x_2R_{\pi/2}v)$ induces an isomorphism of oriented Riemannian vector bundles between the associated vector bundle induced by the standard representation of $\SO(2)$ on $\R^2$ and the vector bundle~$\nu$.
Now let 
\[
    \mathscr{H}  \subset TS_r(\nu)
\]
be an $\SO(2)$-invariant distribution complementary to the vertical distribution.
This defines a principal bundle connection on the $\SO(2)$-principal bundle $S_r(\nu)$ and hence induces a metric linear connection on the associated vector bundle $\nu$.
The horizontal distribution of this linear connection is given by the image of $\mathscr{H} \times T\R^2$ under the differential of the projection $S_r(\nu) \times \R^2 \to (S_r(\nu) \times \R^2)/\SO(2) = \nu$, see \cite{Besse}*{Section~9.55}.
In particular, along $S_r(\nu)$ it coincides with the given distribution $\mathscr{H}$.

Now any $\SO(2)$-invariant Riemannian metric on $\R^2$ induces a Riemannian metric on the total space $\nu$ such that the bundle map $\nu\to B$ is a Riemannian submersion with totally geodesic fibers as described in Example~\ref{ex.vectorbundle}.
\end{example}

\section{Scalar curvature under deformations of invariant metrics} 
\label{sec.Killing}

Let $M$ be a smooth, not necessarily compact $S^1$-manifold, possibly with boundary.
Let $g$ be an $S^1$-invariant Riemannian metric on $M$.
Furthermore, let $\X \in C^{\infty}(TM)$ be the Killing field of the $S^1$-action and let  $\omega^{g}$ be the corresponding $1$-form via the musical isomorphism with respect to $g$.

Let $\alpha, \beta\colon I\subset\R\to\R$ be smooth functions where the domain $I$ contains the range of the function $|{\X}|^2$ on $M$.
We obtain two smooth functions $\alpha\circ |{\X}|^2,\beta\circ|{\X}|^2\colon M\to\R$ which we denote by $\alpha^g$ and $\beta^g$, respectively.
If the metric $g$ is obvious from the context, we drop the superscripts from $\omega^g$, $\alpha^g$ and $\beta^g$.
It will always be clear from the context whether $\alpha$ and $\beta$ are regarded as functions on $\R$ or on $M$.
The same applies to the derivatives $\dot{\alpha}$ and  $\ddot{\alpha}$ with respect to $t$ and similarly for $\beta$.

We start with an auxiliary lemma.
\begin{lemma}
\label{lem:DeltaA}
We have
\begin{gather*}
\grad\,\alpha
=
-2\dot{\alpha}\cdot\nabla_{\X}{\X} ,\\
\div(\nabla_{\X}{\X})
=
\ric({\X},{\X}) - |\nabla {\X}|^2 ,\\
\Delta \alpha 
= 
2 \dot{\alpha}\cdot(\ric({\X},{\X}) - |\nabla {\X}|^2) -4 \ddot{\alpha}\cdot|\nabla_{\X}{\X}|^2 ,
\end{gather*}
and similarly for $\beta$.
\end{lemma}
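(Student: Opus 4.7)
The plan is to derive the three formulas in sequence, with the Killing property of $\X$ (i.e.\ the skew-symmetry of the endomorphism $Z\mapsto \nabla_Z\X$) doing all the real work.

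\textbf{Step 1.} For $\grad\alpha = -2\dot{\alpha}\cdot\nabla_\X\X$, the chain rule applied to $\alpha=\alpha\circ|\X|^2$ gives $\grad\alpha = \dot\alpha\cdot\grad|\X|^2$, so it suffices to establish $\grad|\X|^2 = -2\nabla_\X\X$. Testing against an arbitrary vector $Y$, one has $Y(|\X|^2) = 2\langle\nabla_Y\X,\X\rangle$, and the Killing property yields $\langle\nabla_Y\X,\X\rangle = -\langle Y,\nabla_\X\X\rangle$. This gives the claim.

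\textbf{Step 2.} For $\div(\nabla_\X\X) = \ric(\X,\X) - |\nabla\X|^2$, I rewrite $\nabla_\X\X = -\tfrac{1}{2}\grad|\X|^2$ using Step~1, whence (with the non-negative convention $\Delta = -\div\circ\grad$) we get $\div(\nabla_\X\X) = \tfrac{1}{2}\Delta|\X|^2$. The standard Bochner identity for a Killing field, $\tfrac{1}{2}\Delta|\X|^2 = \ric(\X,\X) - |\nabla\X|^2$, then gives the result. This Bochner identity comes from the fact that any Killing field satisfies $\nabla^*\nabla\X = \ric(\X)$, combined with the general formula $\tfrac{1}{2}\Delta|\X|^2 = \langle\nabla^*\nabla\X,\X\rangle - |\nabla\X|^2$ obtained by a direct computation in a local orthonormal frame.

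\textbf{Step 3.} For $\Delta\alpha$, I apply $\Delta = -\div\circ\grad$ to the expression from Step~1 and expand with the product rule $\div(fV) = f\cdot\div V + \langle\grad f, V\rangle$. Taking $f = -2\dot\alpha$ and $V = \nabla_\X\X$, and noting that Step~1 applied with $\dot\alpha$ in place of $\alpha$ yields $\grad\dot\alpha = -2\ddot\alpha\cdot\nabla_\X\X$, the divergence formula from Step~2 feeds straight in and produces the stated expression. The case of $\beta$ is word-for-word the same.

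The argument contains no real obstacle; the only things to watch are the sign convention for $\Delta$ and the correct statement of the Bochner identity for Killing fields. Everything else is chain rule plus the skew-symmetry of $\nabla\X$.
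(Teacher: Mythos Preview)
Your proof is correct and follows essentially the same route as the paper: chain rule plus Killing skew-symmetry for $\grad\alpha$, then the product rule for $\div$ to get $\Delta\alpha$ from the formula for $\div(\nabla_\X\X)$. The only difference is in how the identity $\div(\nabla_\X\X)=\ric(\X,\X)-|\nabla\X|^2$ is obtained: the paper computes it directly in a synchronous orthonormal frame by expanding $\sum_i\langle\nabla_{e_i}\nabla_\X\X,e_i\rangle$ via the curvature tensor, whereas you package the same computation as the Bochner identity $\tfrac12\Delta|\X|^2=\langle\nabla^*\nabla\X,\X\rangle-|\nabla\X|^2$ together with $\nabla^*\nabla\X=\ric(\X)$ for Killing fields. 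These are the same calculation underneath; your version is slightly less self-contained but arguably cleaner if the reader already knows the Killing-field Bochner formula.
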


\begin{proof}
Since ${\X}$ is a Killing field, its covariant differential $\nabla_\bullet {\X}$ is skew-symmetric.
For the computation we use an orthonormal frame $e_1,\dots,e_n$ which is synchronous at the point under consideration.
For the gradient we find
\begin{align*}
\grad\,\alpha 
&=
\dot{\alpha}\cdot\grad \big(|{\X}|^2\big) \\
&=
\dot{\alpha}\cdot\sum_i \partial_{e_i} \big(|{\X}|^2\big) e_i\\
&=
2\dot{\alpha}\cdot\sum_i \<\nabla_{e_i}{\X},{\X}\> e_i\\
&=
-2\dot{\alpha}\cdot\sum_i \<\nabla_{\X}{\X},e_i\> e_i\\
&=
-2\dot{\alpha}\cdot\nabla_{\X}{\X} .
\end{align*}

For the Laplacian we get
\begin{align}
\Delta \alpha
&=
-\div\,\grad\,\alpha\notag\\
&=
2\, \div(\dot{\alpha}\cdot\nabla_{\X}{\X}) \notag\\
&=
2\, \<\grad\,\dot{\alpha},\nabla_{\X}{\X}\> + 2\, \dot{\alpha}\,\div(\nabla_{\X}{\X}) \notag\\
&=
-4\, \ddot{\alpha}\,|\nabla_{\X}{\X}|^2 + 2\, \dot{\alpha}\,\div(\nabla_{\X}{\X}) .
\label{eq:DeltaA1}
\end{align}

Now observe
\begin{align}
\div(\nabla_{\X}{\X})
&=
\sum_i \<\nabla_{e_i}\nabla_{\X}{\X},e_i\> \notag\\
&=
\sum_i \<R(e_i,{\X}){\X}+\nabla_{\X}\nabla_{e_i}{\X} + \nabla_{[e_i,{\X}]}{\X},e_i\> \notag\\
&=
\ric({\X},{\X}) + \sum_i \big(\partial_{\X}\<\nabla_{e_i}{\X},e_i\> + \<\nabla_{\nabla_{e_i}{\X}}{\X},e_i\>\big) \notag\\
&=
\ric({\X},{\X}) + 0 - \sum_i \<\nabla_{e_i}{\X},\nabla_{e_i}{\X}\> \notag\\
&=
\ric({\X},{\X}) - |\nabla {\X}|^2.
\label{eq:DeltaA3}
\end{align}
Inserting \eqref{eq:DeltaA3} into \eqref{eq:DeltaA1} concludes the proof.
\end{proof}

\begin{example}
The choice $\alpha(t)=t$ yields
\begin{align*}
\grad|{\X}|^2
&=
-2\nabla_{\X}{\X} ,\\
\Delta |{\X}|^2
&=
2(\ric({\X},{\X})-|\nabla {\X}|^2).
\end{align*}
\end{example}

We are interested in deformations of the metric of the form 
\[
    g^\lambda = g + \lambda (\alpha g + \beta  \omega\otimes\omega) . 
\]
We will tacitly assume that for $|\lambda|$ small enough $g^\lambda$ is positive definite. 
This is automatic if $M$ is compact.

\begin{remark} \label{reinterpret} Assume that the $S^1$-action on $M$ is free and let $\check g$ be the induced metric on the orbit space $M/S^1$. 
The orbit map induces a Riemannian submersion
\[
          (M, g) \to (M/S^1, \check{g}) .
 \]
 From this point of view, the canonical variation $g_\tau$ defined in \eqref{can_var} is equal to  $g^{\lambda}$ where 
\[
      \alpha = 0 , \quad \beta(t)  = \frac{1}{t}, \quad \lambda = \tau - 1 . 
\]
\end{remark}

\begin{lemma} \label{S1inv} 
Each metric $g^\lambda$ is $S^1$-invariant.
\end{lemma}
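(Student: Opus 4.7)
The plan is to unpack the definition and check $S^1$-invariance term by term, relying on the fact that the generating Killing field $\X$ is itself $S^1$-invariant (the flow of $\X$ is the $S^1$-action, which commutes with itself).

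First I would observe that, since $g$ is $S^1$-invariant by assumption and $\X$ is invariant under its own flow, the function $|\X|^2 = g(\X,\X)\colon M\to\R$ is $S^1$-invariant. Consequently the composed functions $\alpha = \alpha\circ|\X|^2$ and $\beta = \beta\circ|\X|^2$ are $S^1$-invariant as well.

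Next I would verify that the 1-form $\omega = g(\X,\cdot)$ is $S^1$-invariant. For any $\phi\in S^1$ and $Y\in TM$, using that the differential of the action sends $\X$ to $\X$ and preserves $g$, one has $(\phi^*\omega)(Y) = \omega(d\phi\, Y) = g(\X, d\phi\, Y) = g(d\phi\,\X, d\phi\, Y) = g(\X,Y) = \omega(Y)$. Hence $\omega\otimes\omega$ is also $S^1$-invariant.

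Since the $S^1$-action on sections commutes with sums and with multiplication by invariant scalar functions, $g^\lambda = g + \lambda(\alpha\, g + \beta\,\omega\otimes\omega)$ is a sum and product of $S^1$-invariant tensors, hence itself $S^1$-invariant. There is no real obstacle here; the only thing to be careful about is the invariance of $\X$ itself, which I would briefly justify by noting that the $S^1$-action equals the flow of $\X$ and each flow commutes with its infinitesimal generator.
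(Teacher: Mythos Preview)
Your proof is correct and follows the same overall strategy as the paper: verify invariance of each ingredient $(\alpha,\beta,\omega)$ and then assemble. The difference is only in how invariance is checked. The paper works infinitesimally, computing $\LL_{\X}(\alpha g)=0$ and $\LL_{\X}(\beta\,\omega\otimes\omega)=0$; for the latter it expands $(\LL_{\X}\omega)(Y)=\partial_{\X}\langle\X,Y\rangle-\langle\X,[\X,Y]\rangle$ in terms of the Levi-Civita connection and uses the skew-symmetry of $\nabla\X$ (the Killing equation) to see that the terms cancel. You instead check the integrated statement $\phi^*\omega=\omega$ directly from $\phi^*g=g$ and $d\phi\,\X=\X$, which is a bit quicker and avoids the covariant-derivative computation. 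Both arguments are equally valid; yours has the mild advantage of not invoking the connection at all.
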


\begin{proof}
We check that the Lie derivative $\LL_{\X} g^\lambda$ vanishes.
Since $|{\X}|$ is constant along the $S^1$-orbits, so are $\alpha$ and $\beta $.
Thus, 
\[
\LL_{\X}(\alpha g) = \partial_{\X} \alpha \cdot g + \alpha\cdot \LL_{\X} g = 0\cdot g + \alpha \cdot 0 = 0
\]
and 
\[
\LL_{\X}(\beta \omega\otimes\omega)
=
\partial_{\X}\beta \cdot\omega\otimes\omega + \beta\cdot (\LL_{\X}\omega)\otimes\omega + \beta \omega\otimes(\LL_{\X}\omega)
=
0
\]
because
\begin{align*}
\big(\LL_{\X}\omega\big)(Y)
&=
\partial_{\X}(\omega(Y))-\omega(\LL_{\X}Y)\\
&=
\partial_{\X}\<{\X},Y\>-\<{\X},[{\X},Y]\>\\
&=
\<\nabla_{\X}{\X},Y\>+\<{\X},\nabla_{\X}Y\>-\<{\X},\nabla_{\X}Y\>+\<{\X},\nabla_Y{\X}\>\\
&=
\<\nabla_{\X}{\X},Y\>+\<\nabla_Y{\X},{\X}\>
=
0.
\end{align*}
This concludes the proof.
\end{proof}

\begin{remark} \label{Gamma_inv} 
Suppose $\Gamma$ is a compact Lie group containing $S^1$ as a normal subgroup such that  the given $S^1$-action on $M$ extends to a smooth $\Gamma$-action.
Suppose that $g$ is $\Gamma$-invariant.
Then,  for each $\gamma \in \Gamma$, we have $\gamma^*(\X) = \X$ or $\gamma^*(\X) = - \X$.
So $\omega^g \otimes \omega^g$ is $\Gamma$-invariant and all the metrics $g^{\lambda}$ are $\Gamma$-invariant.
\end{remark}

Next we study the behavior of scalar curvature under this kind of metric variation.
We start with the case $\beta =0$.

\begin{lemma}
\label{lem:B=0}
Let $\beta =0$. 
Then
\[
\tfrac{d}{d\lambda}\big|_{\lambda=0} \scal_{g^\lambda}
= 
-\alpha\, \scal_g+(n-1)\Delta \alpha .
\]
\end{lemma}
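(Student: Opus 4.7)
The key observation is that when $\beta=0$, the deformation is purely conformal:
\[
g^\lambda = (1+\lambda\alpha)\,g = e^{2\phi_\lambda}\,g, \qquad \phi_\lambda := \tfrac{1}{2}\log(1+\lambda\alpha),
\]
which is well-defined and positive for $|\lambda|$ small since $\alpha$ is bounded on $M$ (and in any case compactness was not used).  So I plan to reduce the statement to a direct application of the conformal transformation formula for the scalar curvature already recalled in Section~2 from \cite{Besse}*{Theorem~1.159~(f)}, namely
\[
\scal_{e^{2\phi}g} = e^{-2\phi}\bigl(\scal_g + 2(n-1)\Delta\phi - (n-2)(n-1)|d\phi|^2\bigr),
\]
with $\Delta$ the non-negative Laplace operator of $g$.

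Applying this with $\phi=\phi_\lambda$ yields
\[
\scal_{g^\lambda} = e^{-2\phi_\lambda}\bigl(\scal_g + 2(n-1)\Delta\phi_\lambda - (n-2)(n-1)|d\phi_\lambda|^2\bigr).
\]
The second step is to differentiate this expression in $\lambda$ at $\lambda=0$.  The crucial simplification is that $\phi_0\equiv 0$, so $d\phi_0 = 0$ and the quadratic term $|d\phi_\lambda|^2$ vanishes to second order in $\lambda$ and contributes nothing to the first derivative.  A short computation gives
\[
\dot\phi_0 := \tfrac{d}{d\lambda}\big|_{\lambda=0}\phi_\lambda = \tfrac{\alpha}{2},
\]
and hence the prefactor contributes $-2\dot\phi_0\cdot\scal_g = -\alpha\,\scal_g$ while the linear term contributes $2(n-1)\Delta\dot\phi_0 = (n-1)\Delta\alpha$.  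Summing gives the claimed formula
\[
\tfrac{d}{d\lambda}\big|_{\lambda=0}\scal_{g^\lambda} = -\alpha\,\scal_g + (n-1)\Delta\alpha .
\]

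There is no real obstacle here; the only small point to notice is that when differentiating the term $\Delta^{g^\lambda}\phi_\lambda$ one a priori also has to differentiate the Laplacian in $\lambda$, but because $\phi_0\equiv 0$ that contribution is automatically zero and only $\Delta_g \dot\phi_0$ survives.  For this reason the general variational formula for $\scal$ is not needed, and the proof stays at the level of an elementary differentiation of the conformal formula.
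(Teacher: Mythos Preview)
Your proof is correct and follows essentially the same route as the paper: both recognize that $g^\lambda=(1+\lambda\alpha)g$ is a conformal change, apply the formula from \cite{Besse}*{Theorem~1.159~(f)} with $e^{2f}=1+\lambda\alpha$, and extract the linear term in $\lambda$. One small remark: in that formula the Laplacian is already $\Delta_g$, not $\Delta^{g^\lambda}$, so your caveat about differentiating the Laplacian is unnecessary (though harmless, since as you note $\phi_0=0$ would kill that contribution anyway).
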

\begin{proof}
In this case, $g^\lambda = (1+\lambda \alpha )g$ is conformally equivalent to $g$.
Using \cite{Besse}*{Theorem~1.159~(f)} we find, putting $e^{2f}=1+\lambda \alpha $,
\begin{align*}
\scal_{g^\lambda}
&=
e^{-2f}(\scal_g + 2(n-1)\Delta f -(n-1)(n-2)|df|^2) \\
&=
\frac{1}{1+\lambda \alpha }\Big(\scal_g + 2(n-1)\Delta \big(\tfrac12\log(1+\lambda \alpha )\big) \\
&
\phantom{\frac{1}{1+\lambda \alpha }\Big(}\quad -(n-1)(n-2)\big|d\big(\tfrac12\log(1+\lambda \alpha )\big)\big|^2\Big) \\
&=
\big(1-\lambda \alpha  + \OO(\lambda^2)\big)\big(\scal_g + (n-1)\Delta (\lambda \alpha + \OO(\lambda^2))+ \OO(\lambda^2)\big) \\
&=
(1-\lambda \alpha )(\scal_g + (n-1)\lambda\Delta \alpha )  + \OO(\lambda^2)\\
&=
\scal_g + \lambda(-\alpha \scal_g+ (n-1) \Delta \alpha )+ \OO(\lambda^2).
\end{align*}
This concludes the proof.
\end{proof}

Now we consider the case $\alpha =0$.

\begin{lemma}
\label{lem:A=0}
Let $\alpha =0$.
Then 
\begin{align*}
\tfrac{d}{d\lambda}\big|_{\lambda=0} \scal_{g^\lambda} 
&= 
2\big(\dot{\beta}|{\X}|^2 + \beta\big)\ric_g({\X},{\X}) 
-\big(2\dot{\beta}|{\X}|^2 + 3\beta\big)|\nabla {\X}|^2 \\
&\quad-\big(4\ddot{\beta}|{\X}|^2+10\dot{\beta}\big)|\nabla_{\X}{\X}|^2 .
\end{align*}
\end{lemma}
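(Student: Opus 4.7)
The plan is to apply the standard linearization formula for the scalar curvature to the symmetric $(0,2)$-tensor $h := \beta\,\omega\otimes\omega$, which is precisely $\tfrac{d}{d\lambda}|_{\lambda=0} g^\lambda$. Namely (see e.g.\ \cite{Besse}*{Theorem~1.174}), for any variation $g+\lambda h$,
\[
\tfrac{d}{d\lambda}\big|_{\lambda=0}\scal_{g^\lambda}
=
\Delta (\tr_g h) + \div\div h - \langle h,\ric_g\rangle_g,
\]
with $\Delta$ in the sign convention of Lemma~\ref{lem:DeltaA} (as one verifies by specializing to $h=\alpha g$ and matching Lemma~\ref{lem:B=0}). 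So it suffices to evaluate each of the three ingredients on the right for our specific $h$ and then collect terms.

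The trace and Ricci pairings are immediate: $\tr_g h = \beta\,|\X|^2$ and $\langle h,\ric\rangle = \beta\,\ric(\X,\X)$. For the Laplacian of the trace I would set $\gamma(t) := t\beta(t)$, so that $\tr_g h = \gamma(|\X|^2)$, and invoke Lemma~\ref{lem:DeltaA} directly using $\dot\gamma = \beta + t\dot\beta$ and $\ddot\gamma = 2\dot\beta + t\ddot\beta$. For the divergence I expand $(\div h)_j = \nabla^i(\beta\omega_i\omega_j)$ into three terms and kill two of them: the $(\nabla^i\beta)\omega_i\omega_j$-term vanishes because $\grad\beta = -2\dot\beta\nabla_\X\X$ (Lemma~\ref{lem:DeltaA}) and $\nabla_\X\X\perp\X$ (as $\partial_\X|\X|^2=0$), while the $(\nabla^i\omega_i)\omega_j$-term vanishes because $\X$ is Killing and hence divergence-free. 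What remains is $\beta\,\X^i\nabla_i\omega_j$; since a one-line computation with the Killing property yields $(\nabla_\X\omega)(Y) = \langle \nabla_\X \X, Y\rangle$, this gives
\[
\div h \;=\; \beta\,(\nabla_\X\X)^\flat .
\]
Taking one further divergence, once again using $\grad\beta = -2\dot\beta\nabla_\X\X$ together with $\div(\nabla_\X\X) = \ric(\X,\X) - |\nabla\X|^2$ from Lemma~\ref{lem:DeltaA}, I obtain
\[
\div\div h \;=\; \beta\bigl(\ric(\X,\X) - |\nabla\X|^2\bigr) - 2\dot\beta\,|\nabla_\X\X|^2 .
\]

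Substituting the three pieces into the variation formula and grouping by $\ric(\X,\X)$, $|\nabla\X|^2$, and $|\nabla_\X\X|^2$ produces precisely the claimed identity: the $\beta\,\ric(\X,\X)$ contributions from $\Delta(\tr h)$ and $\div\div h$ are partly absorbed against the Ricci pairing to leave exactly $2(\dot\beta|\X|^2+\beta)\,\ric(\X,\X)$, while the $|\nabla_\X\X|^2$-coefficient collects $-8\dot\beta$ from $\Delta(\tr h)$ and $-2\dot\beta$ from $\div\div h$ to give the stated $-10\dot\beta$. The only real obstacle is bookkeeping of constants; all the genuine differential geometry reduces, after using the Killing property of $\X$ to discard the two vanishing divergence terms, to repeated invocations of Lemma~\ref{lem:DeltaA}.
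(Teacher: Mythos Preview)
Your proposal is correct and follows essentially the same route as the paper: apply the linearization formula $\Delta(\tr_g h)+\delta\delta h-\langle\ric_g,h\rangle$ to $h=\beta\,\omega\otimes\omega$, compute $\delta h=\beta\,\nabla_\X\omega=\beta\,(\nabla_\X\X)^\flat$ by killing the two terms you indicate, and reduce everything to Lemma~\ref{lem:DeltaA}. The only cosmetic difference is that you package $\tr_g h$ as $\gamma(|\X|^2)$ with $\gamma(t)=t\beta(t)$ and invoke Lemma~\ref{lem:DeltaA} once, whereas the paper expands $\Delta(\beta\,|\X|^2)$ via the product rule; the computations are otherwise identical.
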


\begin{proof}
We use the general variation formula for scalar curvature under the metric perturbation
\begin{equation}
\tfrac{d}{d\lambda}\big|_{\lambda=0} \scal_{g+\lambda h} 
= 
\Delta(\tr_gh) + \delta\delta h - \langle \ric_g,h\rangle ,
\label{eq:ScalV_VarForm}
\end{equation}
where $h=\beta \omega\otimes\omega$, see \cite{Besse}*{Theorem~1.174~(e)}.
We observe
\begin{align*}
\tr_gh = \beta |{\X}|^2
\end{align*}
and hence, using Lemma~\ref{lem:DeltaA},
\begin{align}
\Delta \tr_gh
&=
\Delta \beta  \cdot |{\X}|^2 + \beta  \cdot \Delta |{\X}|^2 -2 \<\grad \beta ,\grad |{\X}|^2\> \notag\\
&=
(2 \dot{\beta}(\ric_g({\X},{\X}) - |\nabla {\X}|^2) -4 \ddot{\beta}|\nabla_{\X}{\X}|^2)\cdot |{\X}|^2 \notag\\
&\quad
+\beta\cdot(2(\ric_g({\X},{\X})-|\nabla {\X}|^2)) 
-2 \<-2\dot{\beta}\cdot\nabla_{\X}{\X},-2\nabla_{\X}{\X}\> \notag\\
&=
2(\dot{\beta}|{\X}|^2+\beta)(\ric_g({\X},{\X})-|\nabla {\X}|^2) - 4(\ddot{\beta}|{\X}|^2+2\dot{\beta})|\nabla_{\X}{\X}|^2 .
\label{eq:DeltaB1}
\end{align}
Moreover, we compute, using a synchronous frame,
\begin{align*}
\delta h
&=
\sum_i(\nabla_{e_i}h)(e_i,\bullet) \\
&=
\sum_i(\nabla_{e_i}(\beta \omega\otimes\omega))(e_i,\bullet) \\
&=
\sum_i(\partial_{e_i}\beta \cdot \omega\otimes\omega + \beta \cdot(\nabla_{e_i}\omega)\otimes\omega + \beta \omega\otimes\nabla_{e_i}\omega)(e_i,\bullet) \\
&=
\partial_{\X}\beta \cdot\omega + \beta  \sum_i \<\nabla_{e_i}{\X},e_i\>\omega + \beta \sum_i \omega(e_i)\nabla_{e_i}\omega \\
&=
0 + 0 + \beta  \nabla_{\X}\omega 
\end{align*}
and
\begin{align}
\delta\delta h
&=
\delta(\beta  \nabla_{\X}\omega) \notag\\
&=
\sum_i \nabla_{e_i}(\beta  \nabla_{\X}\omega)(e_i) \notag\\
&=
\sum_i \partial_{e_i}(\beta  (\nabla_{\X}\omega)(e_i))  \notag\\
&=
\sum_i \partial_{e_i}(\beta  \<\nabla_{\X}{\X},e_i\>) \notag\\
&=
\sum_i (\partial_{e_i}\beta  \<\nabla_{\X}{\X},e_i\> + \beta \<\nabla_{e_i}\nabla_{\X}{\X},e_i\>) \notag\\
&=
\<\nabla_{\X}{\X},\grad_g \beta \> + \beta\, \div_g (\nabla_{\X}{\X}) \notag\\
&=
\<\nabla_{\X}{\X},-2\dot{\beta}\nabla_{\X}{\X}\> + \beta (\ric_g({\X},{\X})-|\nabla {\X}|^2) \notag\\
&=
-2\dot{\beta}|\nabla_{\X}{\X}|^2 + \<\ric_g,h\> - \beta |\nabla {\X}|^2.
\label{eq:DeltaB7}
\end{align}
Inserting \eqref{eq:DeltaB1} and \eqref{eq:DeltaB7} into \eqref{eq:ScalV_VarForm} yields
\begin{align*}
\tfrac{d}{d\lambda}\big|_{\lambda=0} \scal_{g+\lambda h} 
&= 
2(\dot{\beta}|{\X}|^2 + \beta)(\ric_g({\X},{\X}) - |\nabla {\X}|^2) 
-4(\ddot{\beta}|{\X}|^2+2\dot{\beta})|\nabla_{\X}{\X}|^2 \\
&\quad
-2\dot{\beta}|\nabla_{\X}{\X}|^2 - \beta |\nabla {\X}|^2 \\
&=
2(\dot{\beta}|{\X}|^2 + \beta)\ric_g({\X},{\X}) 
-(2\dot{\beta}|{\X}|^2 + 3\beta)|\nabla {\X}|^2 \\
&\quad -(4\ddot{\beta}|{\X}|^2+10\dot{\beta})|\nabla_{\X}{\X}|^2 .
\end{align*}
This concludes the proof.
\end{proof}

Combining Lemmas~\ref{lem:DeltaA}, \ref{lem:B=0} and \ref{lem:A=0}, we get

\begin{proposition}
\label{prop:ScalVar}
The scalar curvature under the variation $g^\lambda = g + \lambda (\alpha  g + \beta  \omega\otimes\omega)$ satisfies
\begin{align*}
\tfrac{d}{d\lambda}\big|_{\lambda=0} \scal_{g^\lambda}
&= 
-\alpha\,\scal_g +2((n-1)\dot{\alpha}+\dot{\beta}|{\X}|^2 + \beta)\ric_g({\X},{\X}) \\
&\quad
-(2(n-1)\dot{\alpha}+2\dot{\beta}|{\X}|^2 + 3\beta)|\nabla {\X}|^2 \\
&\quad
-(4(n-1)\ddot{\alpha}+4\ddot{\beta}|{\X}|^2+10\dot{\beta})|\nabla_{\X}{\X}|^2 .
\QED
\end{align*}
\end{proposition}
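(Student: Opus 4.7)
The approach is to exploit linearity of the variational derivative of scalar curvature. The perturbation $h := \tfrac{d}{d\lambda}\big|_{\lambda=0}g^\lambda = \alpha g + \beta\,\omega\otimes\omega$ splits as a sum $h = h_1 + h_2$ with $h_1 = \alpha g$ and $h_2 = \beta\,\omega\otimes\omega$. Since the map $h \mapsto \tfrac{d}{d\lambda}\big|_{\lambda=0}\scal_{g+\lambda h}$ is linear in $h$ (being the linearization of a smooth operator), the contribution of each summand may be computed separately and then added.

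First I would handle $h_1 = \alpha g$ by invoking Lemma~\ref{lem:B=0}, which yields
\[
\tfrac{d}{d\lambda}\big|_{\lambda=0}\scal_{g+\lambda\alpha g} = -\alpha\,\scal_g + (n-1)\Delta\alpha.
\]
I would then substitute the expression for $\Delta\alpha$ provided by Lemma~\ref{lem:DeltaA} in terms of $\ric_g(\X,\X)$, $|\nabla\X|^2$, and $|\nabla_\X\X|^2$. This produces the $\dot{\alpha}$- and $\ddot{\alpha}$-terms appearing in the final formula, weighted by $2(n-1)$ and $-4(n-1)$ respectively, together with the $-\alpha\,\scal_g$ summand.

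Second I would handle $h_2 = \beta\,\omega\otimes\omega$ by directly applying Lemma~\ref{lem:A=0}, which gives
\[
\tfrac{d}{d\lambda}\big|_{\lambda=0}\scal_{g+\lambda\beta\,\omega\otimes\omega} = 2(\dot{\beta}|\X|^2+\beta)\,\ric_g(\X,\X) - (2\dot{\beta}|\X|^2+3\beta)|\nabla\X|^2 - (4\ddot{\beta}|\X|^2+10\dot{\beta})|\nabla_\X\X|^2.
\]
Finally I would add the two expressions and collect terms according to $\ric_g(\X,\X)$, $|\nabla\X|^2$, and $|\nabla_\X\X|^2$; the coefficients combine precisely into $2((n-1)\dot{\alpha}+\dot{\beta}|\X|^2+\beta)$, $-(2(n-1)\dot{\alpha}+2\dot{\beta}|\X|^2+3\beta)$, and $-(4(n-1)\ddot{\alpha}+4\ddot{\beta}|\X|^2+10\dot{\beta})$, respectively.

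There is no real obstacle here: the two non-trivial computations have been isolated into Lemmas~\ref{lem:B=0} and~\ref{lem:A=0}, and the reduction of $\Delta\alpha$ to curvature and Killing-field data is the content of Lemma~\ref{lem:DeltaA}. The only task is therefore linearity plus bookkeeping of coefficients, which is straightforward.
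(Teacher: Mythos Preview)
Your proposal is correct and matches the paper's approach exactly: the proposition is stated as an immediate consequence of combining Lemmas~\ref{lem:DeltaA}, \ref{lem:B=0}, and \ref{lem:A=0} via linearity of the first variation, with no further argument given.
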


\begin{example}
\label{ex:a0b1}
Consider the constant functions $\alpha=0$ and $\beta=1$.
Then Proposition~\ref{prop:ScalVar} becomes
\[
\tfrac{d}{d\lambda}\big|_{\lambda=0} \scal_{g^\lambda}
=
2\,\ric_g({\X},{\X}) - 3\,|\nabla {\X}|^2 .
\]
\end{example}

\begin{lemma}
\label{lem:KillingEst}
For any Killing vector field ${\X}$ we have
\[
|{\X}|^2|\nabla {\X}|^2 \ge 2 |\nabla_{\X}{\X}|^2.
\]
\end{lemma}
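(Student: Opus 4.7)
The plan is to reduce the inequality to a simple pointwise linear-algebra statement about skew-symmetric endomorphisms, using the Killing property.

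First I would recall that since $\X$ is a Killing field, the endomorphism $A := \nabla_\bullet \X \colon TM \to TM$ is skew-symmetric with respect to $g$. The inequality is pointwise, so fix $p \in M$. If $\X(p) = 0$, both sides vanish and there is nothing to show. Otherwise, set $v := \X(p)$ and $u := v/|v|$, and choose a $g$-orthonormal frame $e_1, \ldots, e_n$ at $p$ with $e_1 = u$. Write $a_{ij} := \<A e_j, e_i\>$, so that $a_{ij} = -a_{ji}$ by skew-symmetry.

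Next I would translate the two sides of the inequality into these coordinates. The squared norm $|\nabla \X|^2$ is the Hilbert-Schmidt norm of $A$, namely $\sum_{i,j} a_{ij}^2 = 2 \sum_{i<j} a_{ij}^2$. On the other hand, $\nabla_\X \X = |v| \cdot A e_1$, so $|\nabla_\X \X|^2 = |v|^2 \sum_{j} a_{j1}^2$, and since $a_{11} = 0$ this equals $|v|^2 \sum_{j \geq 2} a_{j1}^2$.

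The desired inequality $|\X|^2 |\nabla \X|^2 \geq 2 |\nabla_\X \X|^2$ then becomes
\[
|v|^2 \cdot 2 \sum_{i<j} a_{ij}^2 \;\geq\; 2 |v|^2 \sum_{j \geq 2} a_{j1}^2,
\]
which is immediate from $\sum_{i<j} a_{ij}^2 \geq \sum_{j \geq 2} a_{1j}^2 = \sum_{j \geq 2} a_{j1}^2$, the remaining terms being non-negative.

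There is no real obstacle here; the only conceptual point is recognizing that the factor of $2$ comes from counting each off-diagonal entry of the skew-symmetric matrix $A$ twice in the Hilbert--Schmidt norm while $|A e_1|^2$ picks up only one copy of the first-column entries, together with the fact that $A e_1 \perp e_1$ so the diagonal term $a_{11}$ drops out.
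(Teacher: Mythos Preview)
Your proof is correct and follows essentially the same route as the paper: both reduce the statement to pointwise linear algebra about the skew-symmetric endomorphism $A=\nabla_\bullet\X$ in an orthonormal frame with $e_1=\X/|\X|$. The only difference is that the paper additionally aligns $e_2$ with $\nabla_\X\X$ and then bounds $|\nabla\X|^2 \ge |\nabla_{e_1}\X|^2 + |\nabla_{e_2}\X|^2 \ge 2|\nabla_\X\X|^2/|\X|^2$ via the identity $\langle\nabla_{e_2}\X,e_1\rangle^2=|\nabla_\X\X|^2/|\X|^2$, whereas your Hilbert--Schmidt argument makes the factor~$2$ appear directly from the double count of off-diagonal entries and avoids introducing $e_2$ explicitly.
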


The point of the lemma is the factor $2$ in the estimate.
Without it, the inequality holds true for all vector fields.

\begin{proof}[Proof of Lemma~\ref{lem:KillingEst}]
At points where $\nabla_{\X}{\X}$ vanishes the assertion holds trivially.
So assume $\nabla_{\X}{\X}\neq 0$ at the point under consideration.
In particular, ${\X}\neq0$ at that point.
We put
\begin{align*}
e_1 := \frac{\X}{|{\X}|} \text{ and }   
e_2 := \frac{\nabla_{\X}{\X}}{|\nabla_{\X}{\X}|} .
\end{align*}
Observe that $e_1$ and $e_2$ are perpendicular because $\nabla {\X}$ is skew-symmetric.
We can complete to an orthonormal basis $e_1,e_2,\dots,e_n$ of the tangent space under consideration.
Now
\begin{align*}
|\nabla_{e_2}{\X}|^2
&=
\<\nabla_{e_2}{\X},e_1\>^2 + \sum_{i\ge3}\<\nabla_{e_2}{\X},e_i\>^2 \\
&\ge
\<\nabla_{e_2}{\X},e_1\>^2 \\
&=
\frac{1}{|\nabla_{\X}{\X}|^2|{\X}|^2}\<\nabla_{\nabla_{\X}{\X}}{\X},{\X}\>^2\\
&=
\frac{1}{|\nabla_{\X}{\X}|^2|{\X}|^2}\<\nabla_{\X}{\X},\nabla_{\X}{\X}\>^2\\
&=
\frac{|\nabla_{\X}{\X}|^2}{|{\X}|^2} .
\end{align*}
Therefore
\begin{align*}
|\nabla {\X}|^2
&=
\sum_i |\nabla_{e_i}{\X}|^2 \\
&\ge 
|\nabla_{e_1}{\X}|^2 + |\nabla_{e_2}{\X}|^2 \\
&\ge 
2 \frac{|\nabla_{\X}{\X}|^2}{|{\X}|^2}.
\qedhere
\end{align*}
\end{proof}

\begin{example}
Consider $\alpha=0$ and $\beta(t)=\frac{1}{t}$.
This choice is possible only if ${\X}$ has no zeros, i.e., if the $S^1$-action is fixed-point free.
Then $\dot{\beta}(t)=-\frac{1}{t^2}$ and $\ddot{\beta}(t)=\frac{2}{t^3}$ and Proposition~\ref{prop:ScalVar} becomes
\begin{align*}
\tfrac{d}{d\lambda}\big|_{\lambda=0} \scal_{g^\lambda}
&=
- |{\X}|^{-2} |\nabla {\X}|^2 + 2\, |{\X}|^{-4}|\nabla_{\X}{\X}|^2 .
\end{align*}
By Lemma~\ref{lem:KillingEst}, $\tfrac{d}{d\lambda}\big|_{\lambda=0} \scal_{g^\lambda}\le0$ in this case.
\end{example}

We generalize this example in the following corollary.

\begin{corollary}
\label{cor:RicciWeg}
Assume $(n-1)\dot{\alpha}+\dot{\beta}t+\beta=0$ and $\dot{\beta}\le0$.
Then
\[
\tfrac{d}{d\lambda}\big|_{\lambda=0} \scal_{g^\lambda}
\le
-\alpha\,\scal_g + (n-1)\dot{\alpha}|\nabla {\X}|^2.
\]
\end{corollary}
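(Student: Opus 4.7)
The plan is to start from the variation formula in Proposition~\ref{prop:ScalVar} and use the algebraic relation $(n-1)\dot\alpha+\dot\beta\,|\X|^2+\beta=0$, obtained by evaluating the hypothesis at $t=|\X|^2$, to simplify each of its three coefficients in turn.

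First, this relation immediately kills the coefficient of $\ric_g(\X,\X)$, which is fortunate because the sign of $\ric_g(\X,\X)$ cannot be controlled. For the coefficient $-(2(n-1)\dot\alpha+2\dot\beta|\X|^2+3\beta)$ of $|\nabla\X|^2$, the rewriting $2(n-1)\dot\alpha+2\dot\beta|\X|^2+3\beta = 2\bigl((n-1)\dot\alpha+\dot\beta|\X|^2+\beta\bigr)+\beta$ reduces it to $-\beta$. Differentiating the hypothesis with respect to $t$ gives $(n-1)\ddot\alpha+\ddot\beta\,t+2\dot\beta=0$, and a similar rearrangement collapses the coefficient of $|\nabla_\X\X|^2$ to $-2\dot\beta$. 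At this intermediate stage, Proposition~\ref{prop:ScalVar} reads
\begin{equation*}
\tfrac{d}{d\lambda}\big|_{\lambda=0}\scal_{g^\lambda} = -\alpha\,\scal_g - \beta\,|\nabla\X|^2 - 2\dot\beta\,|\nabla_\X\X|^2 .
\end{equation*}

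To reach the claimed inequality, the plan is to use the hypothesis once more in the form $-\beta=(n-1)\dot\alpha+\dot\beta|\X|^2$, which splits the $|\nabla\X|^2$ term into the targeted $(n-1)\dot\alpha\,|\nabla\X|^2$ plus a remainder $\dot\beta\,|\X|^2\,|\nabla\X|^2$. This remainder then pairs with $-2\dot\beta\,|\nabla_\X\X|^2$ into $\dot\beta\bigl(|\X|^2|\nabla\X|^2-2|\nabla_\X\X|^2\bigr)$. The Killing estimate Lemma~\ref{lem:KillingEst} gives $|\X|^2|\nabla\X|^2\ge 2|\nabla_\X\X|^2$, and the sign assumption $\dot\beta\le 0$ turns this combination into a non-positive contribution, yielding the stated bound. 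There is no real obstacle beyond bookkeeping; the substantive point is that the constraint on $(\alpha,\beta)$ is tuned so that the uncontrolled Ricci term drops out and the surviving pointwise combination is exactly the one governed by Lemma~\ref{lem:KillingEst}, while the sign condition $\dot\beta\le 0$ matches that inequality's direction.
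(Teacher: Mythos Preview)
Your proof is correct and follows essentially the same route as the paper's: both start from Proposition~\ref{prop:ScalVar}, use the hypothesis (and its $t$-derivative) to reduce the three coefficients to $0$, $-\beta$, and $-2\dot\beta$, and then combine Lemma~\ref{lem:KillingEst} with $\dot\beta\le 0$ and the relation $-\beta=(n-1)\dot\alpha+\dot\beta|\X|^2$ to conclude. The only difference is the order of the final two steps, which is purely cosmetic.
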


\begin{proof}
By Proposition~\ref{prop:ScalVar} we have
\begin{align}
\tfrac{d}{d\lambda}\big|_{\lambda=0} \scal_{g^\lambda}
&=
-\alpha\,\scal_g - (2(n-1)\dot{\alpha}+2\dot{\beta}|{\X}|^2 + 3\beta)|\nabla {\X}|^2 \notag\\
&\quad
-(4(n-1)\ddot{\alpha}+4\ddot{\beta}|{\X}|^2+10\dot{\beta})|\nabla_{\X}{\X}|^2 .
\label{eq:SpecialChoice1}
\end{align}
Now we observe
\begin{align}
2(n-1)\dot{\alpha}+2\dot{\beta}t + 3\beta
&=
-2(\dot{\beta}t+\beta)+2\dot{\beta}t + 3\beta
=
\beta
\label{eq:SpecialChoice2}
\end{align}
and
\begin{align}
4(n-1)\ddot{\alpha}+4\ddot{\beta}t+10\dot{\beta}
&=
-4(2\dot{\beta}+\ddot{\beta}t)+4\ddot{\beta}t+10\dot{\beta}
=
2\dot{\beta}.
\label{eq:SpecialChoice3}
\end{align}
The last equation uses $(n-1)\ddot{\alpha}+2\dot{\beta}+\ddot{\beta}t=0$ which is obtained by differentiating $(n-1)\dot{\alpha}+\dot{\beta}t+\beta=0$.
Inserting \eqref{eq:SpecialChoice2} and \eqref{eq:SpecialChoice3} into \eqref{eq:SpecialChoice1} yields, using Lemma~\ref{lem:KillingEst},
\begin{align*}
\tfrac{d}{d\lambda}\big|_{\lambda=0} \scal_{g^\lambda}
&=
-\alpha\,\scal_g - \beta |\nabla {\X}|^2 -2\dot{\beta}|\nabla_{\X}{\X}|^2 \\
&\le
-\alpha\,\scal_g - \beta |\nabla {\X}|^2 -\dot{\beta}|{\X}|^2|\nabla {\X}|^2 \\
&=
-\alpha\,\scal_g + (n-1)\dot{\alpha}|\nabla {\X}|^2.
\qedhere
\end{align*} 
\end{proof}

\begin{example}
Generalizing Example~\ref{ex:a0b1}, choose a constant $C \in \R$ and consider $\alpha(t)=C$ and $\beta(t)=\frac{1}{t}$.
Still, this choice is only possible if ${\X}$ has no zeros because $\beta(0)$ is not defined.
Corollary~\ref{cor:RicciWeg} yields
\[
\tfrac{d}{d\lambda}\big|_{\lambda=0} \scal_{g^\lambda}
\le -C \,\scal_g .
\]
\end{example}

\begin{example} \label{ex:withoutzero} 
In order to allow zeros of ${\X}$, that is, if $M^{S^1} \neq \emptyset$, we fix constants $C \in\R$ and $\eps >0$.
We put $\alpha(t)= C + \frac{\eps}{n-1}\frac{1}{t+\eps}$ and $\beta(t)=\frac{1}{t+\eps}$.
Then $\dot{\alpha} = -\frac{\eps}{n-1}\frac{1}{(t+\eps)^2}$ and $\dot{\beta}=-\frac{1}{(t+\eps)^2}$.
The assumptions in Corollary~\ref{cor:RicciWeg} are still satisfied and we get
\begin{equation} \label{scaldeform}
\tfrac{d}{d\lambda}\big|_{\lambda=0} \scal_{g^\lambda}
\le
-\alpha\,\scal_g + (n-1)\dot{\alpha}|\nabla {\X}|^2
\le
-\Big(C + \frac{\eps}{n-1}\frac{1}{|{\X}|^2+\eps}\Big)\cdot\scal_g .
\end{equation}
If $\scal_g \ge0$ this implies,
\begin{equation}\label{scaldiffest}
\tfrac{d}{d\lambda}\big|_{\lambda=0} \scal_{g^\lambda}
\le
-C\,\scal_g .
\end{equation}
\end{example}

\begin{lemma} \label{lem:ODE} 
Let $\eps \geq 0$. 
If ${\X}$ has zeros, then we assume $\eps > 0$.  
Let 
\begin{equation} \label{defab}
    \alpha(t) = \frac{\eps}{n-1}\frac{1}{t+\eps} , \quad \beta(t) = \frac{1}{t+\eps}
\end{equation}
be defined as in Example~\ref{ex:withoutzero} for $C = 0$.

Let $\RR^{S^1}(M)$ be the space of Riemannian metrics on $M$, equipped with the weak $C^{\infty}$-topology.
Then, for each $g\in\RR^{S^1}(M)$, there is a unique smooth path  $\mathcal{P}_{\eps} \colon [0,\infty)$ $\to  \RR^{S^1}(M)$ satisfying the ODE 
\begin{equation} \label{system} 
\begin{split} 
            \mathcal{P}_{\eps} (0)   & = g ; \\
           \mathcal{P}_{\eps}'(s)    & = - \Big( \alpha^{\mathcal{P}_{\eps}(s)}  \cdot \mathcal{P}_{\eps}(s) + \beta^{\mathcal{P}_{\eps}(s)} \cdot \big( \omega^{\mathcal{P}_{\eps}(s)} \otimes \omega^{\mathcal{P}_{\eps}(s)}  \big) \Big) .
\end{split} 
\end{equation}     
The paths $\mathcal{P}_{\eps}$ depend continuously (w.r.t.\ the compact-open topology on the space of paths) on $\eps$ and on $g$.

Furthermore, if $\scal_g > 0$, then $\tfrac{d}{ds} \scal_{\mathcal{P}_{\eps}(s)} \geq 0$ on $M$ for all $s$. 
In particular, each $\mathcal{P}_{\eps}(s)$ has positive scalar curvature.
\end{lemma}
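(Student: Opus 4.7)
The strategy is to solve \eqref{system} pointwise. The right-hand side is tensorial: at a point $x\in M$, the value $\mathcal{P}_{\eps}'(s)(x)$ depends only on $\mathcal{P}_{\eps}(s)(x)$ and $\X(x)$, because $\alpha^{g}(x)$, $\beta^{g}(x)$ and $\omega^{g}(x)=g(x)(\X(x),\cdot)$ are all determined by $g(x)$ and $\X(x)$ alone. For each $x$ the system therefore becomes a finite-dimensional autonomous ODE on the open cone of positive-definite symmetric bilinear forms on $T_xM$, with smooth vector field depending smoothly on the parameters $\X(x)$ and $\eps$. Local existence, uniqueness, and smooth dependence on initial data and on parameters follow from standard ODE theory, and the resulting joint smoothness in $(s,x)$ assembles the pointwise solutions into a smooth one-parameter family of smooth symmetric $(2,0)$-tensor fields.

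For global existence and positive-definiteness I would diagonalize the pointwise ODE. Fix $x$ and set $\xi:=\X(x)$. If $\xi=0$, then $\omega^{g_s}(x)$ vanishes identically and the equation reduces to $g_s'(x)=-\tfrac{1}{n-1}g_s(x)$, so $g_s(x)=e^{-s/(n-1)}g(x)$ remains positive-definite for all $s\ge 0$. If $\xi\neq 0$, choose a $g(x)$-orthogonal basis $v_1=\xi,v_2,\dots,v_n$ of $T_xM$ with $v_2,\dots,v_n\in\xi^{\perp_{g(x)}}$. The scalar $u_s:=g_s(x)(\xi,\xi)$ satisfies the autonomous ODE $u_s'=-\alpha(u_s)u_s-\beta(u_s)u_s^2$, which is globally solvable on $[0,\infty)$ and preserves $u_s>0$ since the right-hand side vanishes linearly in $u$ at $u=0$. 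With $u_s$ known, the remaining matrix entries $g_s(v_i,v_j)$ decouple into linear scalar ODEs: the off-diagonal entries satisfy homogeneous equations with zero initial data and hence vanish identically, while each diagonal entry $g_s(v_j,v_j)$ with $j\ge 2$ satisfies $\tfrac{d}{ds}g_s(v_j,v_j)=-\alpha(u_s)g_s(v_j,v_j)$ and therefore stays strictly positive. This yields a global smooth solution whose fiberwise value is positive-definite.

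Continuous dependence of $\mathcal{P}_{\eps}$ on $\eps$ and on $g$ in the compact-open topology on the space of paths follows from the corresponding statement for the finite-dimensional pointwise ODE, combined with the fact that the weak $C^\infty$-topology on $\RR(M)$ is determined by $C^k$-seminorms on compact subsets of $M$, so the parameter dependence can be checked locally.

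For the scalar curvature claim I would apply Example~\ref{ex:withoutzero} with $C=0$ at each time $s$. By the linearization of the scalar curvature functional, $\tfrac{d}{ds}\scal_{\mathcal{P}_{\eps}(s)}$ equals the scalar-curvature variation at $\mathcal{P}_{\eps}(s)$ in the direction $-(\alpha\,\mathcal{P}_{\eps}(s)+\beta\,\omega\otimes\omega)$, which by linearity is the negative of the quantity computed in Example~\ref{ex:withoutzero} applied to the metric $\mathcal{P}_{\eps}(s)$. The estimate \eqref{scaldiffest} then gives $\tfrac{d}{ds}\scal_{\mathcal{P}_{\eps}(s)}(x)\ge 0$ at every point $x$ where $\scal_{\mathcal{P}_{\eps}(s)}(x)\ge 0$. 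A standard bootstrap — letting $T$ be the supremum of times up to which $\scal_{\mathcal{P}_{\eps}(\tau)}$ is pointwise positive, and obtaining a contradiction to maximality at $T$ from the nondecreasing property — upgrades this to $\scal_{\mathcal{P}_{\eps}(s)}>0$ for all $s\ge 0$, and the monotonicity $\tfrac{d}{ds}\scal_{\mathcal{P}_{\eps}(s)}\ge 0$ holds throughout. The main obstacle I anticipate is cleanly exhibiting the triangular structure of the pointwise ODE; once the diagonalization is in place, the remaining arguments are essentially applications of standard ODE theory and of the already-established estimate \eqref{scaldiffest}.
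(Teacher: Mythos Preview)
Your proposal is correct and follows essentially the same approach as the paper: solve \eqref{system} pointwise as a finite-dimensional ODE, exhibit the block-diagonal structure with respect to the splitting $\mathrm{span}\{\X(x)\}\oplus\X(x)^{\perp_g}$ to obtain global existence and positive-definiteness, and then apply the estimate \eqref{scaldiffest} from Example~\ref{ex:withoutzero} with $C=0$ to control the scalar curvature along the flow. The only cosmetic difference is that the paper verifies an ansatz $\mathcal{P}_\eps(s)=a(s)g_V+b(s)g_H$ directly (relying on uniqueness), whereas you derive the vanishing of the off-diagonal entries from the triangular structure of the system; the resulting scalar ODEs for $u_s$ and for the $g_s(v_j,v_j)$ coincide with the paper's equations \eqref{fora} and \eqref{forb}.
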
 

\begin{proof} 
At each point $p\in M$, \eqref{system} constitutes an ODE on the space of symmetric $(0,2)$-tensors on $T_pM$.
Thus there is a unique maximal solution in the open subset of positive definite symmetric $(0,2)$-tensors.
By smooth dependence on the initial values, the solutions depend smoothly on $g$, and by smooth dependence on parameters, they depend smoothly on $p$ and $\eps$ wherever they exist.

To see that the solution is defined on all of $[0,\infty)$, let $p \in M$.
First assume $\X(p) \neq 0$.
Let $\kappa := |\X(p)|_g$, $V := \mathrm{span} \{ \X(p) \} \subset T_p M$ and $H := V^{\perp} \subset T_p M$.
Let $g_p = g_V + g_H$ be the induced splitting.
To determine $\mathcal{P}_{\eps}(s)$ at $p$, we work with  the ansatz
\[
    \mathcal{P}_{\eps}(s) = a(s) \cdot g_V + b(s) \cdot g_H 
\]
where $a, b$ are smooth functions with values in the positive reals.
Note that each $\mathcal{P}_{\eps}(s)$ is positive definite.
We obtain 
\begin{align*}
     \omega^{\mathcal{P}_{\eps}(s)} \otimes \omega^{\mathcal{P}_{\eps}(s)} & = \kappa^2 \cdot a(s)^2 \cdot g_V , \\
      | \X(p)|^2_{\mathcal{P}_{\eps}(s)} & = \kappa^2 \cdot a(s) . 
\end{align*}
Hence \eqref{system} is equivalent to the system
\begin{align} 
 \label{fora}  a(0) = 1, \quad  a' & = - \frac{\eps}{n-1} \frac{a}{\kappa^2 a + \eps}   - \frac{\kappa^2 a^2}{ \kappa^2 a  + \eps} \, , \\
 \label{forb}   b(0) = 1, \quad b' & = - \frac{\eps}{n-1} \frac{b}{\kappa^2 a + \eps} \, . 
 \end{align}  
For $\eps = 0$, this simplifies to $a(0) = b(0) =1$, $a' = - a$, $b' = 0$ with solution $a(s) = \exp(-s)$ and $b \equiv 1$ on $[0,\infty)$.
For $\eps > 0$, the maximal solution  of \eqref{fora} is monotonically  decreasing and positive, hence it is defined on $[0,\infty)$. 
Plugging this into \eqref{forb}, the maximal solution for $b$ is also monotonically decreasing and positive, hence also defined on $[0,\infty)$. 

If $\X(p) = 0$, we work with the ansatz
\[
    \mathcal{P}_{\eps}(s) = b(s) \cdot g .
\]
In particular, $\omega^{\mathcal{P}_{\eps}(s)} = 0$ for all $s$ and  \eqref{system}  is equivalent  to 
\begin{align} 
 \label{forb_new}   b(0) = 1, \quad b' & = - \frac{1}{n-1} b .
 \end{align}  
This is solved by $b(s) = \exp(-\tfrac{s}{n-1})$ on $[0,\infty)$.

Denoting by $\RR(M)$ the space of Riemannian metrics on $M$ with the weak $C^{\infty}$-topology, we thus get a smooth path $\mathcal{P}_{\eps} \colon [0,\infty) \to \RR(M)$ which solves \eqref{system}.
Since $\X$ and $g$ are $S^1$-invariant, the path $\mathcal{P}_{\eps}$ takes values in $\RR^{S^1}(M)$.

Let $s_0 \in [0,\infty)$, and consider the deformation $(\mathcal{P}_{\eps}(s_0))^{\lambda}$ for $\alpha$ and $\beta$ defined in \eqref{defab}.
Then, using \eqref{system},
\[
     \tfrac{d}{ds}\big|_{s=s_0} \scal_{\mathcal{P}_{\eps}(s)} = - \tfrac{d}{d\lambda}\big|_{\lambda  =0} \scal_{(\mathcal{P}_{\eps}(s_0))^{\lambda}} .
\]
By \eqref{scaldiffest} with $C = 0$, the right-hand side is non-negative if $\scal_{\mathcal{P}_{\eps}(s_0)} \geq 0$.
Hence, if $\scal_g > 0$ then $\scal_{\mathcal{P}_{\eps}(s)} > 0$ for all $s \in [0,\infty)$.
This concludes the proof of Lemma~\ref{lem:ODE}. 
\end{proof} 

The path $\mathcal{P}_\eps$ is illustrated in Figure~\ref{fig.deform} for $M=S^2$ with the standard metric and the standard $S^1$-action by rotation.
\begin{figure}[ht]
\parbox{.06\textwidth}{\phantom{X}}
\parbox{.24\textwidth}{
\begin{overpic}[scale=0.11]{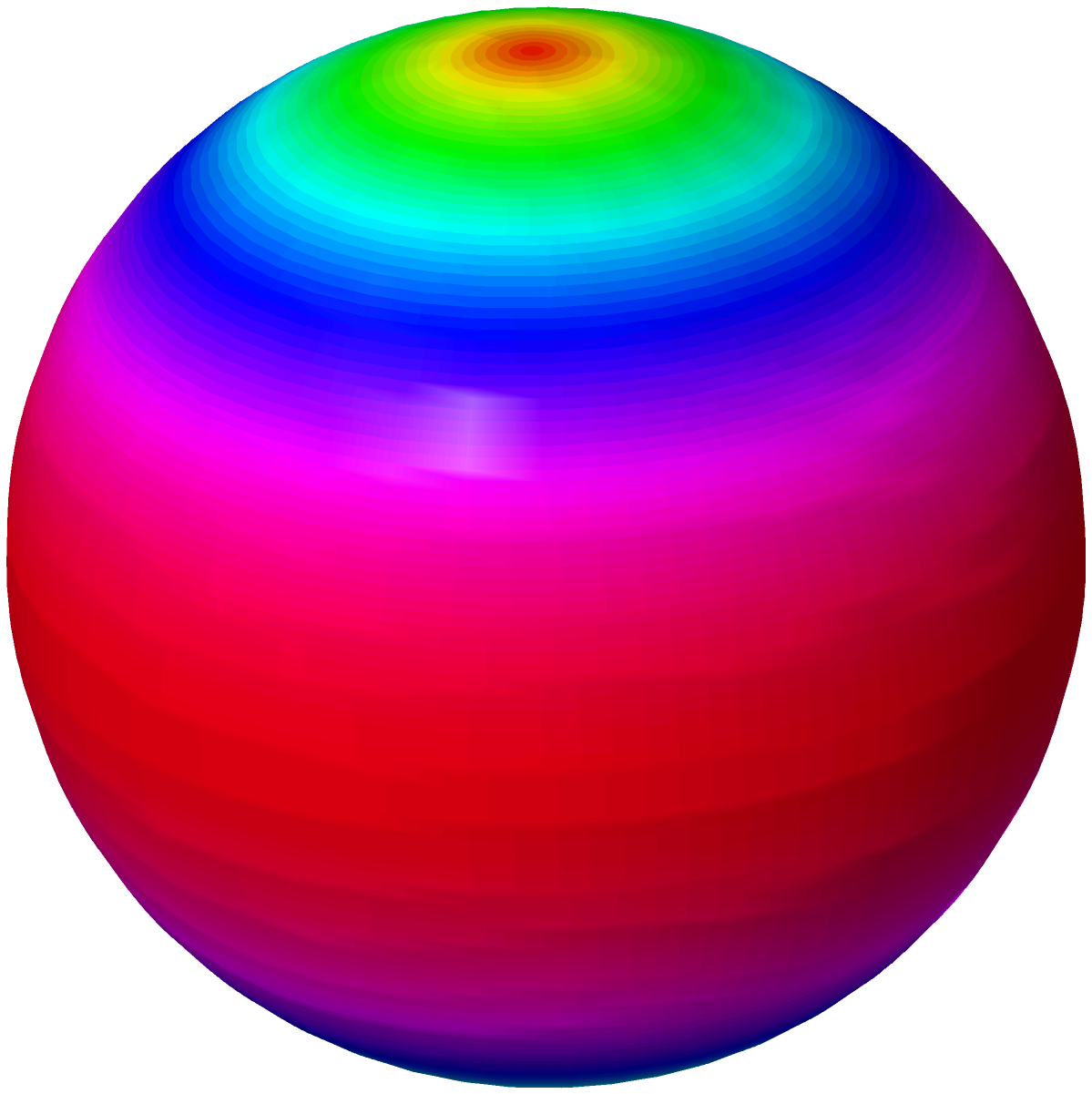}
\put(35,-19){$s=0$}
\end{overpic}

\vspace{10mm}
\includegraphics[scale=0.11]{Bilder/Sphere1.png}
}
\parbox{.22\textwidth}{
\vspace{-2mm}
\begin{overpic}[scale=0.1, trim=-20mm 0 0 0]{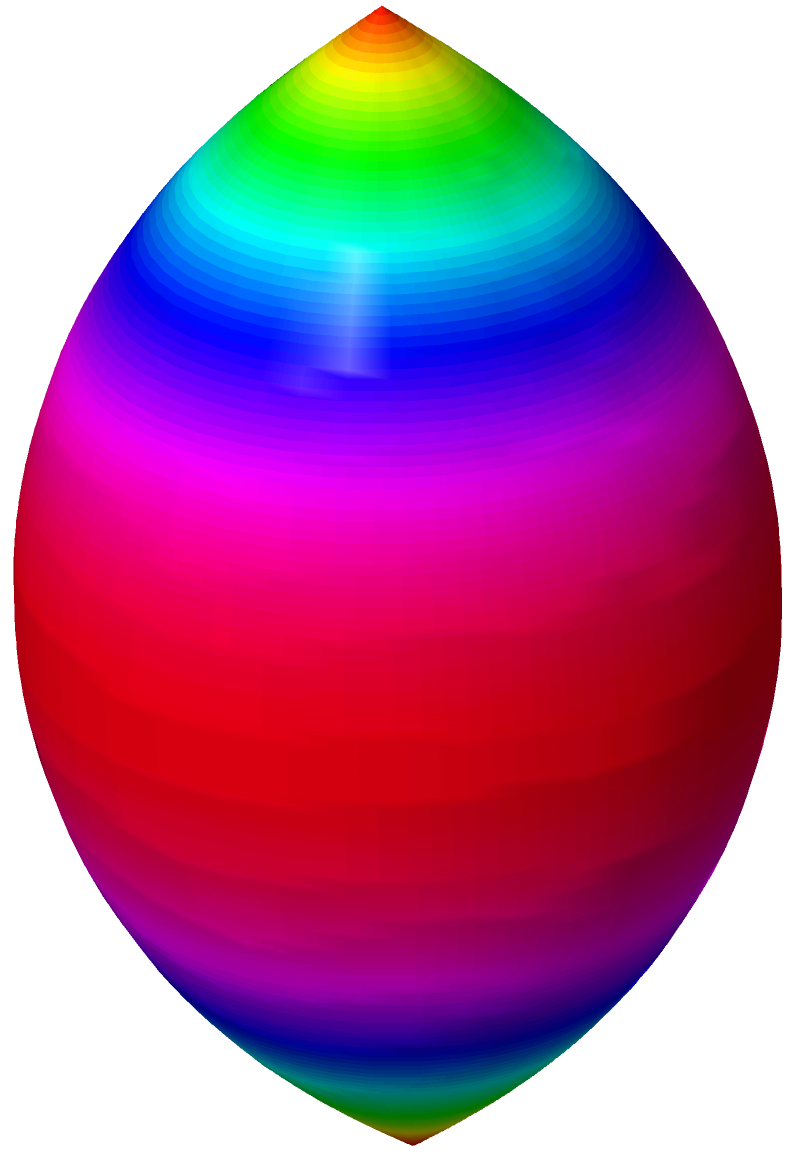}
\put(28,-22){$s=0.5$}
\end{overpic}

\vspace{10mm}
\includegraphics[scale=0.1]{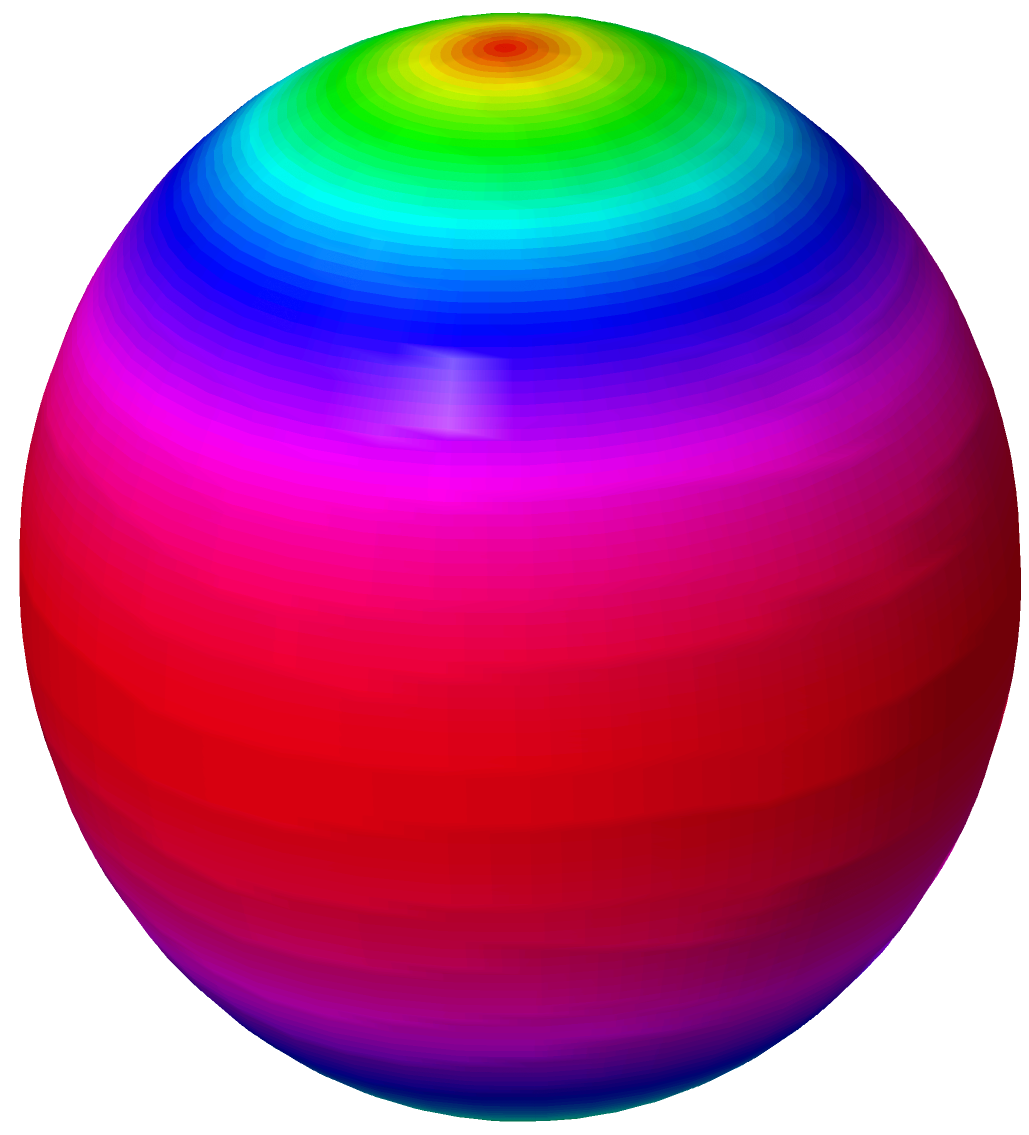}
}
\parbox{.22\textwidth}{
\vspace{-4mm}
\begin{overpic}[scale=0.1, trim=-30mm 0 0 -6mm]{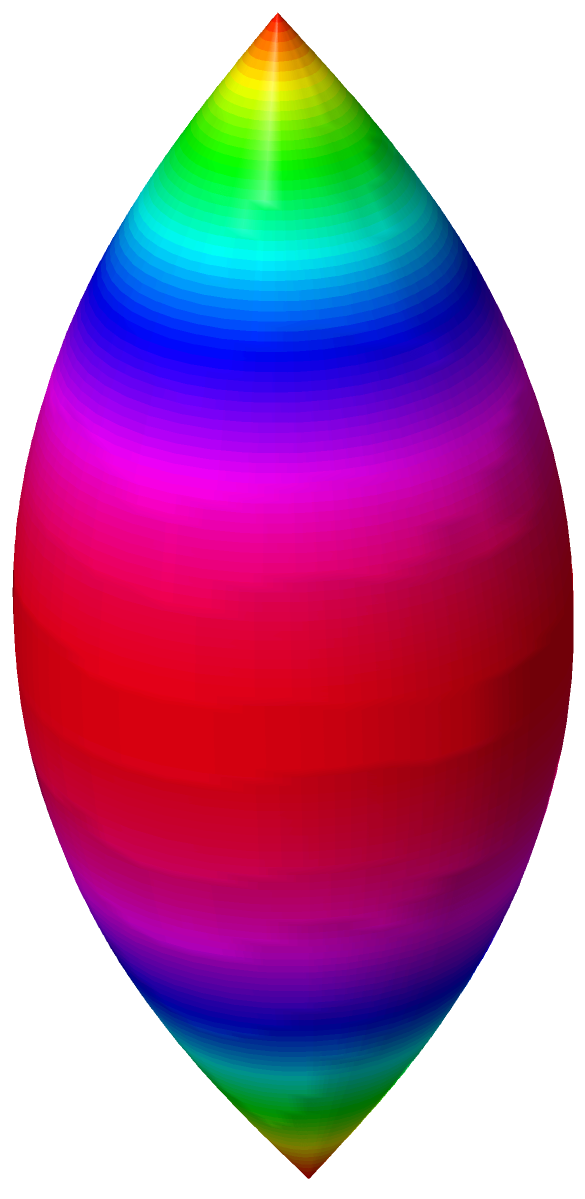}
\put(23,-18){$s=1$}
\end{overpic}

\vspace{10mm}
\includegraphics[scale=0.1]{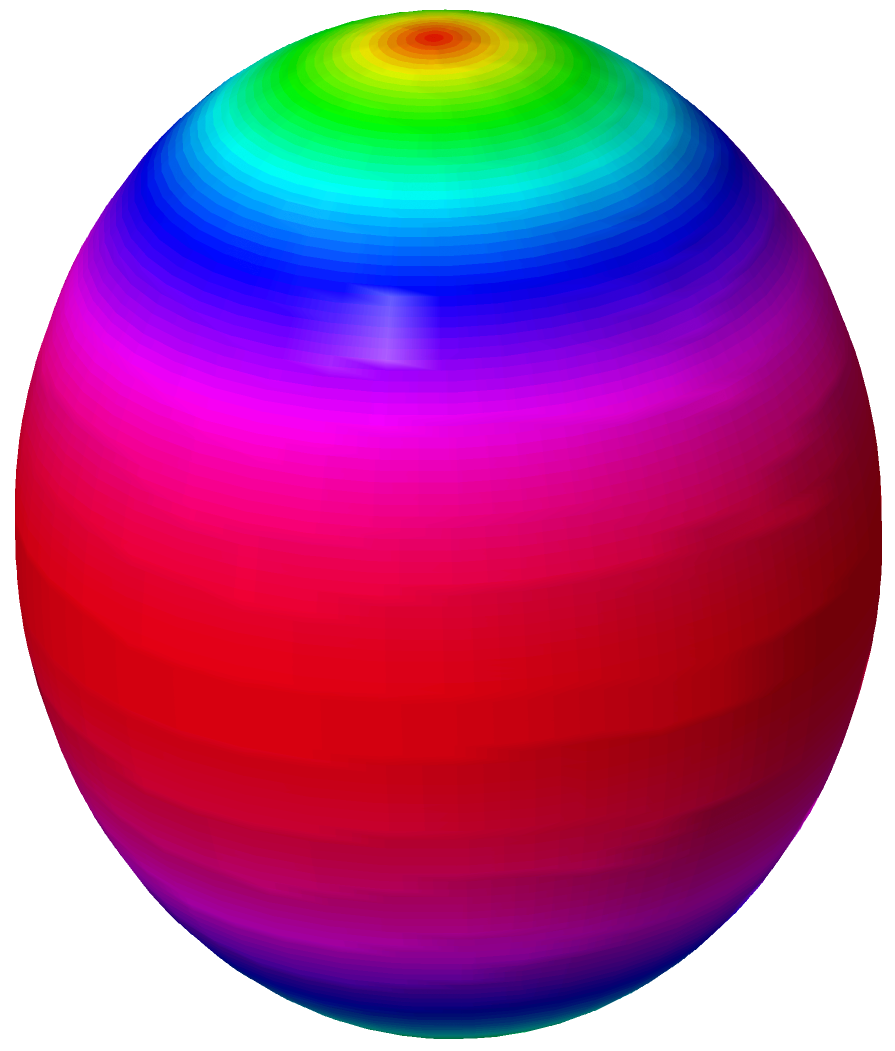}
}
\parbox{.22\textwidth}{
\vspace{-5mm}
\begin{overpic}[scale=0.1, trim=-30mm 0 0 0]{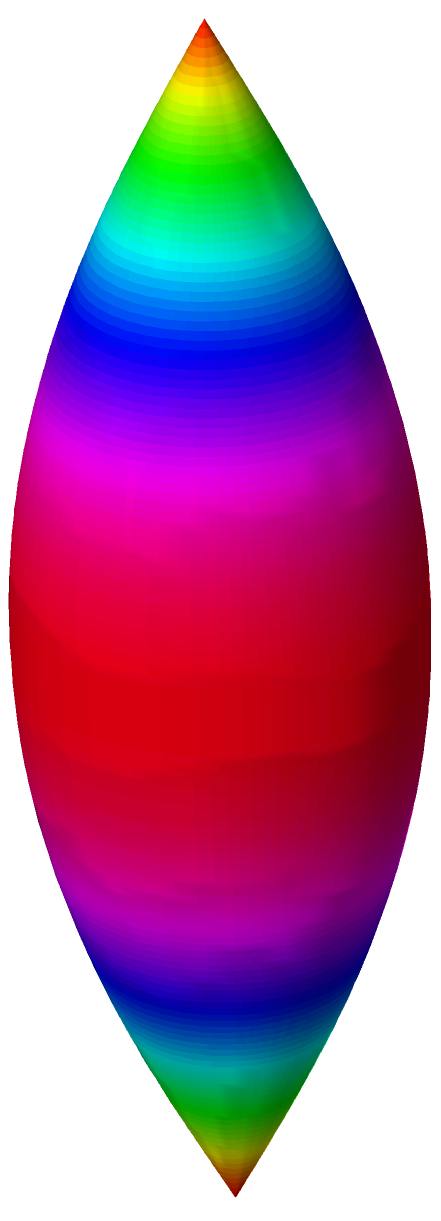}
\put(15,-16){$s=1.5$}
\end{overpic}

\vspace{10mm}
\includegraphics[scale=0.1]{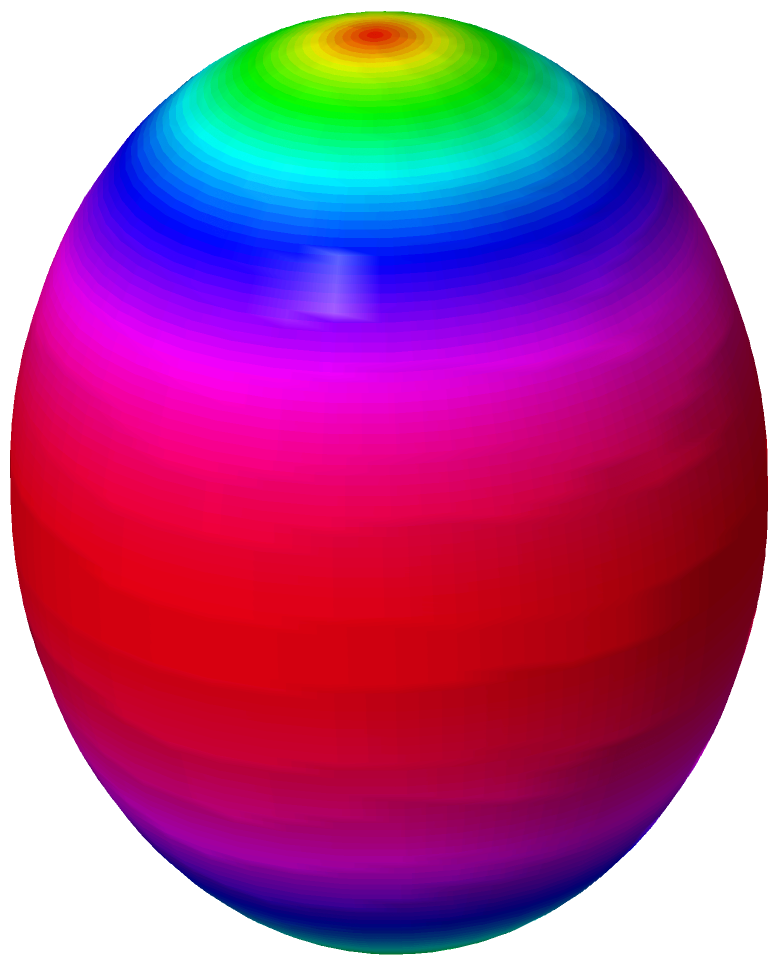}
}
\caption{The path $\mathcal{P}_\eps$ for $M=S^2$ with $\eps=0$ (top) and $\eps=1$ (bottom).}
\label{fig.deform}
\end{figure}

One observes that for $\eps=0$ the metric instantaneously develops singularities at the fixed points while it remains smooth for $\eps>0$.

\begin{addendum} \label{add_Gamma} 
Assume the setting of Lemma~\ref{lem:ODE}.
Let $\Phi \colon M \to M$ be a diffeomorphism satisfying $\Phi^*(\X) =\X$ or $\Phi^*(\X) = - \X$ and let $\mathcal{P}_{\eps}$ be a solution of \eqref{system}.
Then the path $s \mapsto \Phi^*(\mathcal{P}_{\eps}(s))$  is a solution of   \eqref{system} with initial condition $\Phi^*(g)$.

This implies the following:
Suppose $\Gamma$ is a compact Lie group containing $S^1$ as a normal subgroup such that  the given $S^1$-action on $M$ extends to a smooth $\Gamma$-action.
In particular,  for each $\gamma \in \Gamma$, we have $\gamma^*(\X) = \X$ or $\gamma^*(\X) = - \X$.
Suppose that $g$ is $\Gamma$-invariant.
Then each $\mathcal{P}_{\eps}(s)$ is $\Gamma$-invariant.
\end{addendum}

\begin{remark} \label{explicitsol} Assume that the $S^1$-action on $M$ is fixed-point free and that $\eps = 0$.
Let $\mathscr{V}= \mathrm{span} \, \X \subset TM$, let $\mathscr{H} :=\mathscr{V}^{\perp_g} \subset TM$ and let $g = g_{\mathscr{V}} + g_{\mathscr{H}}$ be the resulting splitting of $g$.
Then  the path $\mathcal{P}_0$ is given by
\[
    \mathcal{P}_0(s) = \exp(-s) g_\mathscr{V} + g_{\mathscr{H}} . 
 \]
If the $S^1$-action is free, this coincides with the canonical variation $g_{\exp(-s)}$ for the  Riemannian submersion $(M,g) \to (M/S^1, \check g)$, compare Remark~\ref{reinterpret}.
\end{remark}

The following result will be important for the proof of Theorem~\ref{mainB}.

\begin{proposition} \label{maindeform}  
Let $\Gamma$ be a compact Lie group containing $S^1$ as a normal subgroup.
Let $M$ be a compact smooth $\Gamma$-manifold, possibly with boundary.
Let $U \subset M$ be a $\Gamma$-invariant open subset such that the action restricted to $S^1$ is fixed-point free on $\bar U$.
Let $K$ be a compact Hausdorff space and let $g \colon K \to \mathscr{R}^{\Gamma}_{>0}(M)$ be a continuous map.

Then for each $s_0 > 0$ there is a continuous map 
 \[
    \mathcal{P} \colon K \to C^{\infty}([0,s_0], \mathscr{R}^{\Gamma}_{>0}(M))
 \]
such that, for $s \in [0,s_0]$ and $\xi \in K$, we have
 \[
      \mathcal{P}(\xi)(s)|_U =\big(\exp(-s) \, g(\xi)_{\mathscr{V}} + g(\xi)_{\mathscr{H}} \big)|_U .
 \]
 Here, the $\Gamma$-invariant subbundles $\mathscr{V}$ and $\mathscr{H}$ of $TU$ are defined as in Remark~\ref{explicitsol} with respect to $g(\xi)$ and the given $S^1$-action.
 \end{proposition}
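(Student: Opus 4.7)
The plan is to extend Lemma~\ref{lem:ODE} by allowing the parameter $\eps$ to depend on the point $p \in M$. Using an equivariant partition of unity, I would construct a smooth $\Gamma$-invariant function $\eps \colon M \to [0,\infty)$ that vanishes on a $\Gamma$-invariant open neighborhood $V$ of $\bar U$ (chosen so that $\X$ is nowhere zero on $\bar V$) and is bounded below by a positive constant on a $\Gamma$-invariant neighborhood of $M^{S^1}$. Such a function exists because $\bar U$ and $M^{S^1}$ are disjoint closed $\Gamma$-invariant subsets of $M$. The key consequence is $|\X|^2 + \eps > 0$ on $M$, which makes the coefficients $\alpha^g = \tfrac{\eps}{(n-1)(|\X|^2 + \eps)}$ and $\beta^g = \tfrac{1}{|\X|^2 + \eps}$ appearing in \eqref{system} smooth $\Gamma$-invariant functions on $M$.

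Because \eqref{system} is pointwise at each $p \in M$, the existence and uniqueness argument from the proof of Lemma~\ref{lem:ODE} carries over verbatim with $\eps$ replaced by $\eps(p)$, giving a unique solution $\mathcal{P}(\xi)(s)$ on $[0,\infty)$ with $\mathcal{P}(\xi)(0) = g(\xi)$. Smooth dependence on parameters gives smoothness of $\mathcal{P}(\xi)(s)$ in $(p,s)$, continuous dependence on initial data gives continuity in $\xi \in K$, and $\Gamma$-equivariance of the ODE (since both $\eps$ and $g(\xi)$ are $\Gamma$-invariant) combined with uniqueness provides $\Gamma$-invariance, exactly as in Addendum~\ref{add_Gamma}. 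On the open set $V \supseteq U$, where $\eps \equiv 0$ and $\X \neq 0$, the ODE reduces pointwise to the $\eps = 0$ case of Remark~\ref{explicitsol}, yielding $\mathcal{P}(\xi)(s)|_U = \exp(-s)\, g(\xi)_{\mathscr{V}} + g(\xi)_{\mathscr{H}}$, as required.

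The main obstacle is verifying $\scal_{\mathcal{P}(\xi)(s)} > 0$ on all of $M$ for every $s \in [0, s_0]$ and $\xi \in K$. For constant $\eps$ this is precisely the content of Lemma~\ref{lem:ODE}, deduced from Proposition~\ref{prop:ScalVar} combined with Corollary~\ref{cor:RicciWeg} and the estimate \eqref{scaldiffest} of Example~\ref{ex:withoutzero}. Letting $\eps$ depend on $p$ introduces correction terms involving $\nabla\eps$, $\Delta\eps$, and $\<\nabla\eps, \nabla|\X|^2\>$ into the computations of $\Delta\alpha^g$, $\Delta\beta^g$, and $\delta\delta(\beta^g\omega\otimes\omega)$ performed in Lemma~\ref{lem:DeltaA}. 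I would control these by choosing $\eps$ with a small maximum value and a smooth transition supported in $\{\X \neq 0\}$, so that the corrections are small relative to the leading terms appearing in Example~\ref{ex:withoutzero}; combined with compactness of $K$, which yields uniform lower bounds for $\scal_{g(\xi)}$ on compact subsets of $M$, this allows one to restore a differential inequality of the form $\tfrac{d}{d\lambda}\big|_{\lambda=0}\scal_{g^\lambda} \le -C\,\scal_g$ for some $C \ge 0$. A Gronwall-type continuation along the ODE flow then yields $\scal_{\mathcal{P}(\xi)(s)} > 0$ throughout $[0, s_0]$. Making this error control rigorous in the transition region, where both $\nabla\eps$ and $\X$ are nontrivial, is the most technical part of the argument.
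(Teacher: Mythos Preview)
Your plan to let $\eps$ vary with $p$ is a natural idea, but it is not what the paper does, and the paper's route sidesteps exactly the difficulty you flag as ``the most technical part.'' The paper keeps $\eps$ constant and instead interpolates at the level of \emph{metrics}: it takes the two paths $\mathcal{P}(\xi)_\eps$ (from Lemma~\ref{lem:ODE} with a fixed $\eps>0$, defined on all of $M$) and $\mathcal{P}(\xi)_0$ (with $\eps=0$, defined on $M\setminus M^{S^1}$), both of which already preserve positive scalar curvature by Lemma~\ref{lem:ODE}, and sets
\[
\mathcal{P}(\xi)(s) = (1-\chi)\,\mathcal{P}(\xi)_\eps(s) + \chi\,\mathcal{P}(\xi)_0(s)
\]
for a $\Gamma$-invariant cutoff $\chi$ equal to $0$ near $M^{S^1}$ and to $1$ on $U$. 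Near $M^{S^1}$ this is just $\mathcal{P}(\xi)_\eps$, hence has positive scalar curvature. On the complement, where $\X\neq 0$, the continuous dependence on $\eps$ already recorded in Lemma~\ref{lem:ODE} gives $\mathcal{P}(\xi)_\eps\to\mathcal{P}(\xi)_0$ in $C^0([0,s_0],\mathscr{R}^\Gamma)$ uniformly in $\xi$, hence the convex combination also converges to $\mathcal{P}(\xi)_0$; since positive scalar curvature is open, a sufficiently small $\eps$ does the job. No scalar curvature computation beyond Lemma~\ref{lem:ODE} is needed.

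Your variable-$\eps$ approach could probably be completed, but the specific inequality you aim for does not emerge. On the transition region the ``leading term'' $-\alpha\,\scal_g$ of Example~\ref{ex:withoutzero} is itself only $O(\eps_0)$ (since $\alpha\sim \eps_0/|\X|^2$ there), while the correction terms from $\nabla\eps,\Delta\eps$ are also $O(\eps_0)$ and do not carry a factor of $\scal_g$; so you cannot absorb them into a bound of the form $\tfrac{d}{d\lambda}\scal_{g^\lambda}\le -C\,\scal_g$. What one actually gets is $\tfrac{d}{ds}\scal_{\mathcal{P}(s)}\ge -C\eps_0$ with $C$ depending on a priori $C^2$-bounds for $\mathcal{P}(\xi)(s)$ over $s\in[0,s_0]$ and $\xi\in K$, leading to a linear lower bound $\scal_{\mathcal{P}(s)}\ge \scal_{g(\xi)}-C\eps_0 s_0$ rather than a Gronwall estimate. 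Closing this requires uniform control of those $C^2$-bounds in $\eps_0$---feasible via the explicit pointwise ODE, but precisely the extra work the paper's convex-combination argument avoids.
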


\begin{proof} 
Since the $S^1$-action on $\bar U$ is fixed-point free, there exists a smooth $\Gamma$-invariant cut-off function $\chi \colon M \to [0,1]$ which is $0$ on a $\Gamma$-invariant open neighborhood $V$ of $M^{S^1}$ in $M$ and equal to $1$ on $U$. 
Let $\eps > 0$ and $s_0 > 0$.
For $\xi \in K$ consider the paths
\begin{enumerate}[\myicon]
     \item $\mathcal{P}(\xi)_{\eps} \colon [0,s_0] \to \mathscr{R}^{\Gamma}_{>0}(M)$, 
     \item $\mathcal{P}(\xi)_0 \colon [0,s_0] \to \mathscr{R}^{\Gamma}_{>0}(M \setminus M^{S^1})$
\end{enumerate} 
 constructed in Lemma~\ref{lem:ODE} in combination with Addendum~\ref{add_Gamma} with initial values $g(\xi)$.
 These paths depend continuously on $\xi$.

 Now, for $s \geq 0$ we set
 \[
     \mathcal{P}(\xi)(s):= (1-\chi) \cdot \mathcal{P}(\xi)_{\eps}(s) + \chi \cdot \mathcal{P}(\xi)_0(s) \in \mathscr{R}^{\Gamma}(M) . 
  \]
 Then $\mathcal{P}(\xi)(s)|_V = \mathcal{P}(\xi)_{\eps}(s)|_V$  for all $s$ and $\xi$.
 On the compact subset $M \setminus V \subset M$, we have
 \[
    \lim_{\eps \to 0} \mathcal{P}(\xi)_{\eps} = \mathcal{P}(\xi)_0
 \]
 in $C^0([0,s_0], \mathscr{R}^{\Gamma}(M\setminus V))$ uniformly in $\xi$.
Hence, on $M \setminus V$, we have
 \[
     \lim_{\eps \to 0} \mathcal{P}(\xi)  =  \mathcal{P}(\xi)_0
 \]
in $C^0([0,s_0], \mathscr{R}^{\Gamma}(M \setminus V))$ uniformly in $\xi$.
Since the paths $\mathcal{P}(\xi)_0$ consist of positive scalar curvature metrics, choosing $\eps$ small enough, each $\mathcal{P}(\xi)(s)|_{M \setminus V}$ has  positive scalar curvature.
 
The formula for $\mathcal{P}(\xi)(s)|_U$ follows from Remark~\ref{explicitsol}.
\end{proof}

\section{Uniformization of invariant tubular neighborhoods} 
\label{sec.uniformization}

Let $\Gamma$ be a compact Lie group,  let $M$ be a smooth $\Gamma$-manifold without boundary and let $S \subset M$ be a compact $\Gamma$-invariant smooth submanifold without boundary.
Let $ \RR^\Gamma(M) $ be the space of $\Gamma$-invariant Riemannian metrics on $M$, equipped with the weak $C^{\infty}$-topology.
Furthermore, let $\Diff^\Gamma(M,S)$ be the space of $\Gamma$-equivariant diffeomorphisms of $M$ which act as the identity on $S$, equipped with the weak $C^{\infty}$-topology.

Let $K$ be a compact Hausdorff space and let 
\[
  g \colon K \to \RR^\Gamma(M) 
\]
be a continuous family of $\Gamma$-invariant Riemannian metrics on $M$. 
Let 
\[
    \nu = (TM|_S) / TS \to S 
\]
be the normal bundle of $S$ in $M$. 
It is equipped with an induced $\Gamma$-action.
With this definition, $\nu$ is independent of any choice of Riemannian metric on $M$.

Every Riemannian metric $g(\xi)$ identifies $\nu$ with the geometric normal bundle $\nu^{g(\xi)}\subset TM|_S$ and induces a fiberwise scalar product on $\nu$, see Example~\ref{ex.normalbundle}.
Furthermore, the normal exponential map for $g(\xi)$ along $S$ maps some $\Gamma$-invariant open neighborhood of the zero section of $\nu$ diffeomorphically and $\Gamma$-equivariantly onto a $\Gamma$-invariant open neighborhood of $S$ in $M$.
By the uniqueness of tubular neighborhoods, see \cite{Hirsch94}*{Chapter~4, Theorem~5.3}, any two such embeddings for $g(\xi_1)$ and $g(\xi_2)$ are smoothly isotopic.
We will formulate and prove this result in an equivariant and family version.
Our exposition, which is based on parallel transport with respect to generalized cylinder metrics, is somewhat different from the one used in \cite{Hirsch94}.

\begin{proposition} \label{uniformnormal}
Let $g_\mathrm{ref}$  be a $\Gamma$-invariant Riemannian metric on $M$.
Let $\mathscr{W} \subset M$ be a $\Gamma$-invariant open neighborhood of $S$.
Then there exist open $\Gamma$-invariant neighborhoods  $U  \subset  \nu$ of the zero-section and $V \subset \mathscr{W}$ of $S$ and a continuous map 
\[
     \Phi\colon K  \to  C^{\infty}\big( [0,1], \mathrm{Diff}^{\Gamma}(M,S)\big)
\]
such that the normal exponential map along $S$ with respect to  $g_\mathrm{ref}$  induces a $\Gamma$-equivariant diffeomorphism $U \stackrel{\approx}{\longrightarrow} V$, and the following assertions hold for every $\xi \in K$:
\begin{enumerate}[(i)] 
\item $\Phi(\xi)(0) = \id_M$;
\item $\Phi(\xi)(s)|_{M \setminus \mathscr{W}}  = \id_{M \setminus \mathscr{W}}$ for all $s \in [0,1]$;
\item  the normal exponential map along $S$ with respect to $\Phi(\xi)(1)^*(g(\xi))$ induces a $\Gamma$-equivariant diffeomorphism $U \stackrel{\approx}{\longrightarrow} V$, and this  diffeomorphism coincides with the one induced by the normal exponential map with respect to $g_\mathrm{ref}$.
In particular, we get
\[
      \nu^{\Phi(\xi)(1)^*(g(\xi))} = \nu^{g_\mathrm{ref}} \subset TM|_S.
\]
\item The bundle metrics on $\nu$ induced by $\Phi(\xi)(1)^*(g(\xi))$ and by $g_\mathrm{ref}$ coincide.
\end{enumerate} 
\end{proposition} 

\begin{proof} 
Define the continuous map $\bar g \colon K \to C^{\infty}([0,1], \RR^{\Gamma}(M))$ by
\begin{equation} \label{straight_path}
    \bar g(\xi)(s) := (1-s)g_\mathrm{ref} + s g(\xi). 
\end{equation}
For $\xi \in K$ and $s \in [0,1]$, let $h(\xi,s)$  be the $\Gamma$-invariant vector bundle metric on $\nu$ induced by $\bar g(\xi)(s)$. 

Consider the $\Gamma$-invariant smooth generalized cylinder metric $ds^2 + \bar g(\xi)(s)$ on $[0,1] \times M$.
It induces a $\Gamma$-equivariant connection on the normal bundle of $[0,1] \times S$ in $[0,1] \times M$.
Parallel transport with respect to this connection induces a $\Gamma$-equivariant vector bundle isometry
\begin{equation} \label{bundle_isometry}
         \mathcal{P}(\xi)(s)  \colon  (\nu , h(\xi,0)) \to (\nu , h(\xi,s))
\end{equation}
for each $\xi \in K$ and $s \in [0,1]$.
Furthermore, $\mathcal{P}(\xi)(0) = \id_\nu$. 
Here we identify $\nu$ with the orthogonal complements of $TS \subset TM|_S$ with respect to $\bar g(\xi)(0)$ and $\bar g(\xi)(s)$, respectively.
The isometry $\mathcal{P}(\xi)(s)$ depends continuously on $\xi$ and smoothly on $s$.

Let $\mathscr{U} \subset \nu$ be an open $\Gamma$-invariant neighborhood of the zero-section. 
For $\xi \in K$ and $s \in [0,1]$, we obtain an open $\Gamma$-invariant neighborhood of the zero-section by
\[
     U_{\xi,s} := \mathcal{P}(\xi)(s)(\mathscr{U}) \subset \nu .
\]
By choosing  $\mathscr{U}$ small enough, we  can assume that for every $\xi \in K$ and $s \in [0,1]$, there exists an open $\Gamma$-invariant neighborhood $V_{\xi,s} \subset M$ of $S$ such that the normal exponential map of $\bar g (\xi)(s)$ along $S$ induces a $\Gamma$-equivariant diffeomorphism 
\[
     \exp^{\perp}_{\xi,s}  \colon U_{\xi,s} \xrightarrow{\approx} V_{\xi,s}.
\]
Note that $\exp^{\perp}_{\xi,1}$ is the normal exponential map for  $g(\xi)$ and that $\exp^{\perp}_{\xi,0}$ is the one for  $g_\mathrm{ref}$. 

We observe that $V_{\xi,0}$ does not depend on $\xi$. 
Put $\mathscr{V} := V_{\xi, 0}$.
For each $\xi \in K$, we obtain  a smooth $\Gamma$-equivariant isotopy $\varphi(\xi)(s)$, $s \in [0,1]$, such that the following diagram commutes:
\begin{equation} \label{compatible} 
\vcenter{\hbox{
\xymatrixcolsep{4pc}
\xymatrix{ 
       \mathscr{U}   \ar[r]^{ \exp^{\perp}_{\xi, 0} } \ar[d]_{\mathcal{P}(\xi)(s)} & \mathscr{V}  \ar[d]^{\varphi(\xi)(s)}  \\
       U_{\xi,s}   \ar[r]^{  \exp^{\perp}_{\xi,s}}    & V_{\xi,s}  
}}}
\end{equation} 
These isotopies satisfy $\varphi(\xi)(0)=\id_{\mathscr{V}}$, $\varphi(\xi)(s)|_S=\id_S$, and they depend continuously on $\xi$.

By  the isotopy extension theorem  \cite{Hirsch94}*{Chapter~8, Theorem~1.4}, which immediately generalizes to the $\Gamma$-equivariant and family situation (for the equivariant situation, see \cite{Kan07}*{Theorem 8.3}), there is a $\Gamma$-invariant open neighborhood $V \subset \mathscr{W}\cap \mathscr{V}\subset M$ of $S$ and  a continuous map 
\[
     \Phi \colon K \to C^{\infty}\big([0,1] , \mathrm{Diff}^{\Gamma}(M,S)\big) 
\]
 such that, for $\xi \in K$ and $s \in [0,1]$, 
\begin{enumerate}[\myicon] 
  \item $\Phi(\xi)(0) = \id_M$; 
  \item $\Phi(\xi)(s)|_{M \setminus \mathscr{W}}  = \id_{M \setminus \mathscr{W}}$; 
  \item $ \Phi(\xi)(s)|_{V}  =  \varphi(\xi)(s)|_{V}$.
 \end{enumerate} 
Now Proposition~\ref{uniformnormal} holds with this $V$ and with $U := (\exp^{\perp}_{g_\mathrm{ref}})^{-1}(V)$.
\end{proof}

For later use we formulate the following variant of Proposition~\ref{uniformnormal} for manifolds with boundary.
Let $\Gamma$ be a compact Lie group and  let $M$ be a smooth $\Gamma$-manifold with compact boundary $\partial M$. 
Set $S := \partial M$. 
Let $ \RR^\Gamma(M) $ be the space of $\Gamma$-invariant Riemannian metrics on $M$, equipped with the weak $C^{\infty}$-topology.
Furthermore, let $\Diff^\Gamma(M,S)$ be the space of $\Gamma$-equivariant diffeomorphisms of $M$ which act as the identity on $S$, equipped with the weak $C^{\infty}$-topology.

Let $K$ be a compact Hausdorff space and let 
\[
  g \colon K \to \RR^\Gamma(M) 
\]
be a continuous family of $\Gamma$-invariant Riemannian metrics on $M$. 
Let 
\[
    \nu = (TM|_S) / TS \to S 
\]
be the normal bundle of $S$ in $M$. 
Choose a $\Gamma$-equivariant vector bundle trivialization $\nu \cong S \times \R$, where, for each $x \in S$, the vector $(x,1)$ points into $M$.

For every Riemannian metric $g(\xi)$, the normal exponential map along $S$ maps some $\Gamma$-invariant open neighborhood of the zero section $S \times \{0\}$ in $S \times [0,\infty)$ diffeomorphically and $\Gamma$-equivariantly onto a $\Gamma$-invariant open neighborhood of $S$ in $M$.

\begin{proposition} \label{uniformnormal_bis}
Let $g_\mathrm{ref}$  be a $\Gamma$-invariant Riemannian metric on $M$.
Let $\mathscr{W} \subset M$ be a $\Gamma$-invariant open neighborhood of $S$.
Then there exist open $\Gamma$-invariant neighborhoods  $U  \subset  S \times [0,\infty)$ of  $S \times \{0\}$ and $V \subset \mathscr{W}$ of $S$ and a continuous map 
\[
     \Phi\colon K  \to  C^{\infty}\big( [0,1], \mathrm{Diff}^{\Gamma}(M,S)\big)
\]
such that the normal exponential map along $S$ with respect to  $g_\mathrm{ref}$  induces a $\Gamma$-equivariant diffeomorphism $U \stackrel{\approx}{\longrightarrow} V$, and the following assertions hold for every $\xi \in K$:
\begin{enumerate}[(i)] 
\item $\Phi(\xi)(0) = \id_M$;
\item $\Phi(\xi)(s)|_{M \setminus \mathscr{W}}  = \id_{M \setminus \mathscr{W}}$ for all $s \in [0,1]$;
\item  the normal exponential map along $S$ with respect to $\Phi(\xi)(1)^*(g(\xi))$ induces a $\Gamma$-equivariant diffeomorphism $U \stackrel{\approx}{\longrightarrow} V$, and this  diffeomorphism coincides with the one induced by the normal exponential map with respect to $g_\mathrm{ref}$.
\end{enumerate} 
\end{proposition} 

\begin{proof} 
The proof is the same as the one for Proposition~\ref{uniformnormal}, with the only difference being that $\mathscr{U} \subset S \times [0,\infty)$ is an open $\Gamma$-invariant neighborhood of the zero section $S \times \{0\}$ of $\nu$.
\end{proof}

\section{Equivariant smoothing of mean-convex singularities} 
\label{sec.smoothing}

Let $\Gamma$ be a compact Lie group, let $\hat M$ be a smooth $\Gamma$-manifold without boundary and let $\Sigma \subset \hat M$ be a compact $\Gamma$-invariant smooth hypersurface without boundary. 
Assume that $\hat M = M_1 \cup_\Sigma M_2$ where $M_1$ and $M_2$ are smooth $\Gamma$-manifolds with compact boundaries $\Sigma$.
Consider the disjoint union 
\[
   \mathcal{M} = M_1 \sqcup M_2. 
\]
This is a smooth $\Gamma$-manifold with compact boundary $\Sigma \sqcup \Sigma$. 
If $g_j$ is a $\Gamma$-invariant Riemannian metric on $M_j$, $j = 1,2$, we denote by $(g_j)_0$ the induced metric on $\partial M_j = \Sigma$.

Let $H_{g_j} \colon \partial M_j \to \R$ be the unnormalized mean curvature of $g_j$ with respect to the exterior unit normal.
Recall that with this convention, which we already used in Section~\ref{sec.scalmeansub}, the mean curvature of the closed unit ball in $\R^n$ is equal to $n-1$.

Consider
\begin{equation} \label{def_sing}
      \RR_{> 0}^{\Gamma, \Sigma}(\hat M)  := \{ g_1 \sqcup  g_2   \in \RR^{\Gamma}_{> 0} (\mathcal{M}) \mid (g_1)_0 = (g_2)_0 ,\, H_{g_1} + H_{g_2} \geq 0\} , 
\end{equation}
the space of $\Gamma$-invariant Riemannian metrics on $\hat M$ of positive scalar curvature  and  ``mean convex singularity'' along $\Sigma$. 
This space is equipped with the subspace topology of $\RR^{\Gamma}_{>0}(\mathcal{M})$. 

Each smooth $g \in \RR^{\Gamma}_{> 0}(\hat M)$ induces $ g_1 \sqcup g_2 \in \RR_{> 0}^{\Gamma, \Sigma}(\hat M)$ where $g_j = g|_{M_j}$. 
In \cite{BaerHanke2}*{Theorem~4.11} we proved that for $\Gamma=\{1\}$, a converse holds up to homotopy.
In Proposition~\ref{desingularise} below, we obtain an equivariant analog.

Let  $g_1 \sqcup g_2 \in \RR^{\Gamma}(\mathcal{M})$. 
For sufficiently small  $\eps > 0$, the normal exponential map along $\partial \mathcal{M}$ with respect to $g_1 \sqcup g_2$ induces a $\Gamma$-equivariant diffeomorphism 
\begin{equation} \label{collar}
     [0,\eps) \times \partial \mathcal{M} = \big( [0,\eps) \times  \partial M_1 \big) \, \sqcup \, \big( [0,\eps) \times \partial M_2 \big) \stackrel{\approx}{\longrightarrow} U^{g_1}_{\eps}  \sqcup U^{g_2}_{\eps} 
\end{equation}
where $U_\eps^{g_j}$ is the open $\Gamma$-invariant $\eps$-neighborhood of $\partial M_j$ in $M_j$ with respect to $g_j$.
Note that \eqref{collar} induces a smooth structure on the open $\Gamma$-invariant neighborhood $U^{g_1}_\eps \cup U^{g_2}_{\eps} \subset \hat M$ of $\Sigma$.
However, this does not necessarily coincide with the given smooth structure on $\hat M$.

Let $t$ be the canonical coordinate on $[0,\eps)$. 
By the generalized Gauss lemma and using the diffeomorphism \eqref{collar}, each metric $g_j$ takes the form
\begin{equation} \label{Gauss}
     g_j = dt^2 + (g_j)_t
\end{equation}
on $U^{g_j}_\eps$ where $(g_j)_t$, $t \in [0,\eps)$, is a smooth family of $\Gamma$-invariant  metrics on $\partial M_j$.
For $\ell \geq 0$, we define smooth symmetric $\Gamma$-invariant $(0,2)$-tensor fields on $\partial M_j$ by 
\[
      (g_j)_0^{(\ell)}(X,Y) := \tfrac{d^\ell}{dt^\ell}\big|_{ t = 0} ( (g_j)_t(X,Y)). 
\]
In particular, $(g_j)_0^{(0)} = (g_j)_0$ is the metric on $\partial M_j$ induced by $g_j$ and $(g_j)_0^{(1)} = (\dot g_j)_0$ is the $t$-derivative of $(g_j)_t$ at $t=0$.

\begin{lemma} \label{charglatt} Let $g_1 \sqcup g_2 \in \RR^{\Gamma}(\mathcal{M})$. 
Then $g_1 \sqcup g_2$ defines a smooth metric on $\hat M$ if and only if the following two properties are satisfied:
\begin{enumerate}[(i)]
\item \label{erstens} 
The smooth structure on $U^{g_1}_\eps \cup U^{g_2}_\eps \subset \hat M$ induced by the collars~\eqref{collar} coincides with the one induced by the smooth structure on $\hat M$.
  \item \label{zweitens} We have  $(g_1)_0^{(\ell)} = (-1)^{\ell} (g_2)_0^{(\ell)}$ for all $\ell \geq 0$.
 \end{enumerate}
 \end{lemma}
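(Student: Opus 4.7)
The plan is to introduce a signed-distance coordinate across $\Sigma$ and reduce the question to a one-variable smoothness check. Set $s=-t$ on the $M_1$-side and $s=t$ on the $M_2$-side. Concatenating the two half-collars from \eqref{collar} produces a continuous $\Gamma$-equivariant map
$$
\Phi\colon (-\eps,\eps)\times\Sigma \to U^{g_1}_\eps \cup U^{g_2}_\eps \subset \hat M
$$
whose restriction to each closed half-space $\pm s \ge 0$ is a smooth diffeomorphism onto $U^{g_j}_\eps$. Under $\Phi$, the metric pulls back to $ds^2 + h(s)$, where
$$
h(s) \;=\; \begin{cases} (g_1)_{-s}, & s \le 0,\\ (g_2)_s, & s \ge 0. \end{cases}
$$
The claim then splits into (a) whether $\Phi$ is a diffeomorphism with respect to the given smooth structure on $\hat M$, which is precisely \ref{erstens}, and (b) whether the family $s \mapsto h(s)$ is $C^\infty$ at $s=0$.

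For the ``only if'' direction, assume $g_1 \sqcup g_2$ extends to a smooth metric on $\hat M$. Then its normal exponential map along $\Sigma$ is smooth in $\hat M$'s structure; as this exponential map coincides piece-by-piece with $\Phi$, condition \ref{erstens} follows. Pulling back the smooth metric by $\Phi$ then forces $ds^2 + h(s)$, and in particular $h$, to be smooth at $s=0$. A direct computation gives
$$
\tfrac{d^\ell h}{ds^\ell}\Big|_{s=0^-} \;=\; (-1)^\ell (g_1)_0^{(\ell)},\qquad
\tfrac{d^\ell h}{ds^\ell}\Big|_{s=0^+} \;=\; (g_2)_0^{(\ell)},
$$
and equating these one-sided derivatives yields \ref{zweitens}.

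For the ``if'' direction, \ref{erstens} says that $\Phi$ is a $\Gamma$-equivariant diffeomorphism onto an open neighborhood of $\Sigma$ in $\hat M$, so smoothness of $g_1 \sqcup g_2$ near $\Sigma$ reduces to smoothness of $ds^2 + h(s)$ on $(-\eps,\eps)\times\Sigma$. Away from $s=0$ this is immediate; at $s=0$ we use the elementary one-variable fact that two smooth functions on adjacent closed half-intervals whose one-sided derivatives of all orders agree at the common endpoint patch together to a $C^\infty$ function (a straightforward induction on $\ell$ showing that $h^{(\ell)}(0)$ exists and is continuous there). Condition \ref{zweitens} supplies exactly this matching, componentwise in the tensor $h$. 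Smoothness of $g_1 \sqcup g_2$ away from $\Sigma$ is automatic, so we are done. The only genuine subtlety is the role of \ref{erstens}: without it, full Taylor matching would still leave open the possibility that the concatenated collar parametrizes a topological neighborhood of $\Sigma$ with a smooth structure distinct from the one inherited from $\hat M$, so \ref{erstens} is what turns the Taylor calculation into an honest characterization.
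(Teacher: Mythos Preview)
Your proof is correct and follows essentially the same approach as the paper's own proof: both introduce the signed-distance coordinate across $\Sigma$ via the concatenated collar map $\Phi\colon(-\eps,\eps)\times\Sigma\to\hat M$, identify condition~\ref{erstens} with $\Phi$ being a diffeomorphism for the given smooth structure, and reduce condition~\ref{zweitens} to the matching of one-sided $t$-derivatives of the family $h(s)$. Your version is more fleshed out---in particular you spell out why matching all one-sided derivatives yields $C^\infty$ gluing, which the paper leaves implicit---but the underlying argument is the same.
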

 
\begin{proof} 
Clearly, if $g_1 \sqcup g_2$ defines a smooth metric on $\hat M$, then \ref{erstens} and \ref{zweitens} hold.
Conversely, if \ref{erstens} holds, then 
\[
U^{g_1}_{\eps}  \sqcup U^{g_2}_{\eps} \stackrel{\eqref{collar}}{\approx} [0,\eps)  \times \partial \mathcal{M}  \to (-\eps, \eps) \times \Sigma, 
\quad 
(t,x) \mapsto 
\begin{cases} 
  (-t,x) & \textrm{ for } x \in \partial M_1 , \\
  (+ t,x) & \textrm{ for } x \in \partial M_2,
\end{cases}
\]
induces a diffeomorphism $U^{g_1}_{\eps}  \cup U^{g_2}_{\eps} \xrightarrow{\approx}  (-\eps, \eps) \times \Sigma$ on the open neighborhood $U^{g_1}_{\eps}  \cup U^{g_2}_{\eps}$ of $\Sigma$  in $ \hat M$.
Hence, if both \ref{erstens} and  \ref{zweitens} are satisfied, then $j^{k} g_1|_\Sigma = j^{k} g_2|_\Sigma$ for all $k$-jets, $k \geq 0$. 
That is, $g_1 \sqcup g_2$ defines a smooth metric on $\hat M$.
\end{proof}

\begin{proposition} \label{desingularise} 
The inclusion $\RR^{\Gamma}_{> 0} (\hat M) \hookrightarrow \RR_{> 0}^{\Gamma, \Sigma}(\hat M)$ is a weak homotopy equivalence. 
\end{proposition}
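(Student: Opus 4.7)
The strategy is to equivariantize the non-equivariant statement \cite{BaerHanke2}*{Theorem~4.11}. Every step in that argument is a local construction in a collar of $\Sigma$, carried out on symmetric tensors and respecting diffeomorphism-naturality, so it can be made $\Gamma$-invariant without any averaging procedure.

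First, I would reformulate the claim via the standard characterization of weak equivalences: it suffices to show that for every $m \geq 0$ and every continuous map $g \colon D^m \to \RR^{\Gamma,\Sigma}_{>0}(\hat M)$ with $g(\partial D^m) \subset \RR^\Gamma_{>0}(\hat M)$ there is a homotopy, rel $\partial D^m$, to a map into $\RR^\Gamma_{>0}(\hat M)$. Applying Proposition~\ref{uniformnormal} to $S = \Sigma \subset M_j$ for $j = 1, 2$, I would first continuously pull back the family by $\Gamma$-equivariant isotopies so that, in a fixed $\Gamma$-invariant collar $[0,\eps) \times \Sigma$ of $\partial M_j$, each $g_j(\xi) = dt^2 + (g_j)_t(\xi)$ is in Gauss normal form. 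This reduces the problem to deforming the collar jets $((g_j)_0^{(\ell)}(\xi))_{\ell \geq 0}$ to satisfy the matching conditions of Lemma~\ref{charglatt}~\ref{zweitens}; condition \ref{erstens} is automatic once the collars are fixed.

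The heart of the argument is the equivariant corner bending. For each $\xi$ I would interpolate across $t = 0$ by a family of metrics $dt^2 + h_s(\xi, t)$, $s \in [0,1]$, with $h_0(\xi, t)$ agreeing with the original collar families and $h_1(\xi, t)$ smoothly extending across $0$ so that all jet identities of Lemma~\ref{charglatt}~\ref{zweitens} hold. As in the classical Gromov--Lawson corner-smoothing technique, the scalar curvature of the interpolated metric remains positive precisely because of the mean-convexity assumption $H_{g_1(\xi)} + H_{g_2(\xi)} \geq 0$: the curvature error introduced by the $t$-bending is controlled by the trace of the sum of the two second fundamental forms, and strict positivity can be arranged by first perturbing by an arbitrarily small amount into the strictly mean-convex regime. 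Since every $h_s(\xi, t)$ is a $\Gamma$-invariant symmetric $(0,2)$-tensor on $\Sigma$ and the construction is natural in $\xi$, the output is a continuous homotopy valued at $s = 1$ in $\RR^\Gamma_{>0}(\hat M)$.

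The main obstacle I anticipate is keeping the homotopy trivial on $\partial D^m$, where the metric is already smooth and no bending is needed, while still producing a genuinely smooth metric in the interior of $D^m$. I would resolve this by letting the interpolation amplitude depend on the size of the jet mismatch between $(g_1)_t(\xi)$ and $(g_2)_{-t}(\xi)$, so that already matching jets produce no change, combined if necessary with a radial cutoff in $|\xi|$ on the deformation parameter $s$. Once these technicalities are handled, the resulting map exhibits the inclusion as a weak homotopy equivalence.
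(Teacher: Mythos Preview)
Your strategy matches the paper's: equivariantize \cite{BaerHanke2}*{Theorem~4.11} by observing that each step is diffeomorphism-natural. The paper differs from your outline in two points of execution. First, it reverses your order of operations: it first matches all boundary jets by invoking \cite{BaerHanke2}*{Theorem~3.7} as a black box---noting that the only place $\Gamma$-invariance is not automatic in that proof is a call to the local flexibility lemma, which is replaced by the equivariant Theorem~\ref{thm:FlexLem1}---and only afterward applies Proposition~\ref{uniformnormal} to align the collar structures. Second, the paper does not attempt a homotopy rel $\partial D^m$; it only arranges that $\partial D^m$ stays inside $\RR^\Gamma_{>0}(\hat M)$ throughout the deformation, which is all that is needed for the relative homotopy groups to vanish and which dissolves the obstacle you anticipated in your last paragraph. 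Your direct corner-bending route would also work, but citing \cite{BaerHanke2}*{Theorem~3.7} spares one from rederiving the curvature estimates in families.
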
 

\begin{proof} 
Let ${m } \geq 0$ and 
\begin{equation} \label{start_g}
    g = g_1 \sqcup g_2  \colon  D^{m }  \to \RR_{> 0}^{\Gamma, \Sigma}(\hat M)
\end{equation}
be continuous such that $g( \partial D^{m } ) \subset \RR^{\Gamma}_{> 0} (\hat M)$.  
For $\xi \in D^m$ and $j = 1,2$ let 
\[
      \II_{g_j(\xi)} = - \tfrac{1}{2} (\dot g_j)_0 
\]
be the second fundamental form of $g_j(\xi)$ with respect to the exterior normal $-\frac{\partial}{\partial t}$ along $\partial M_j$.
By definition, we have $H_{g_j(\xi)} =  \tr_{g_j(\xi)_0} \II_{g_j(\xi)}$, where $(g_1(\xi))_0 = (g_2(\xi))_0$ by \eqref{def_sing}.

We apply \cite{BaerHanke2}*{Theorem~3.7}\footnote{In that reference, the normalized mean curvature is used.} to $\mathcal{M}$ with the symmetric tensor field 
\[
  k(\xi) :=  - \II_{g_2(\xi)} \sqcup \II_{g_2(\xi)}\in C^{\infty} ( T^* \partial \mathcal{M}  \otimes T^* \partial \mathcal{M} )
\]
and $\sigma = 0$.
Note that \cite{BaerHanke2}*{Theorem~3.7} applies since
\[
 \tr_{g(\xi)_0}(k) =   \tr_{g_1(\xi)_0} ( -  \II_{g_2(\xi)} ) +  \tr_{g_2(\xi)_0} (  \II_{g_2(\xi)} ) = 0 \leq H_{g_1(\xi)} + H_{g_2(\xi)}.
\]
In our situation, the proof of  \cite{BaerHanke2}*{Theorem~3.7} yields a map 
\begin{equation}\label{desing}
     f = f_1 \sqcup f_2 \colon  D^{m}  \times [0,1] \to \RR_{> 0}^{ \Gamma, \Sigma}(\hat M) 
\end{equation}
satisfying $f_j(\xi)(0) = g_j(\xi)$.
To justify the $\Gamma$-equivariance of $f(\xi,s)$, which is not discussed in \cite{BaerHanke2}, in the proof of \cite{BaerHanke2}*{Proposition~3.3}, instead of citing the local flexibility lemma \cite{BaerHanke1}*{Addendum~3.4}, one can use the $\Gamma$-equivariant local flexibility lemma, Theorem~\ref{thm:FlexLem1}.
The other parts of the construction of $f$ in \cite{BaerHanke2} remain unchanged.

Using parts (c), (d), (f), and (g) of \cite{BaerHanke2}*{Theorem~3.7}, we get
\begin{enumerate}[(I)]
   \item \label{aa} $f(\xi,s) \in \RR_{> 0}^{\Gamma}(\hat M)$ for $(\xi,s)  \in \partial D^m \times [0,1]$;
   \item \label{bb} $f_1(\xi,1)^{(\ell)}_0 = (-1)^{\ell} f_2 (\xi,1)_0^{(\ell)}$ for $\xi \in D^m$ and $\ell \geq 0$.
\end{enumerate}

After applying the homotopy $f$ in \eqref{desing}, which starts at $g$ and satisfies \ref{aa}, we obtain a new map $g \colon  D^{m }  \to \RR_{> 0}^{\Gamma, \Sigma}(\hat M)$,
\[
   g(\xi) :=g_1(\xi) \sqcup g_2(\xi) \in  \mathscr{R}_{>0}^{\Gamma, \Sigma} ( \hat M), \qquad g_j(\xi) := f_j(\xi,1), \quad j =1,2.
\]  
Using \ref{bb}, the metric $g_1(\xi) \sqcup g_2(\xi)$ satisfies property~\ref{zweitens} of Lemma~\ref{charglatt} for each $\xi \in D^m$.
Through a further homotopy, we will ensure that $g_1(\xi) \sqcup g_2(\xi)$ also satisfies property~\ref{erstens} of Lemma~\ref{charglatt}.

For this aim, we apply Proposition~\ref{uniformnormal_bis} for $M:= \mathcal{M}$, for some $\Gamma$-invariant  Riemannian metric $g_\mathrm{ref} \in \RR^{\Gamma}(\mathcal{M})$ which is induced by a $\Gamma$-invariant smooth Riemannian metric on $\hat M$, for  the family $g \colon D^m \to \RR^{\Gamma, \Sigma}_{>0}(\hat M)$, for $\mathscr{W} := \mathcal{M}$  and for $S := \partial \mathcal{M} = \Sigma \sqcup \Sigma$.
Let  $\Phi \colon D^m \to C^{\infty}([0,1], \mathrm{Diff}^{\Gamma}(\mathcal{M},S))$ be the resulting map. 
It has the following properties: 
\begin{enumerate}[(A)]
   \item for all $\xi \in D^m$, we have $\Phi(\xi)(0) = \id_{\mathcal{M}}$;
   \item \label{Be} for small enough $\eps$, the $\Gamma$-equivariant diffeomorphisms~\eqref{collar} with respect to $(\Phi(\xi)(1))^*(g(\xi))$ are independent of $\xi \in D^m$.
   In particular, all of them define the same smooth structure on $\hat M$, and this coincides with the given smooth structure on $\hat M$.
\end{enumerate}

Let $\xi \in \partial D^m$.
Then $g(\xi) \in \RR^{\Gamma}(\hat M)$, and therefore, for all $s \in [0,1]$, we have $\bar g(\xi)(s) = (1-s)g_\mathrm{ref} + s g(\xi) \in \RR^{\Gamma}(\hat M)$ in \eqref{straight_path}.
Now recall that around $S = \partial \mathcal{M} = \Sigma \cup \Sigma \subset \mathcal{M}$, the diffeomorphisms  $\Phi(\xi)(s) \in \Diff^{\Gamma}(\mathcal{M},S)$ constructed in the proof of Proposition~\ref{uniformnormal_bis} are induced by the vector bundle isometries from equation \eqref{bundle_isometry} and the normal exponential maps along $\Sigma \subset \hat M$ for the metrics $g_\mathrm{ref}$ and $\bar g(\xi)(s)$, see diagram \eqref{compatible} and the preceding remarks. 
Since these vector bundle isometries and metrics are smooth (with respect to the given smooth structure on $\hat M$), the two diffeomorphisms defined by $\Phi(\xi)(s)$ on the two sides of $\Sigma \subset \hat M$ glue to a smooth diffeomorphism  in $\Diff^{\Gamma}(\hat M, \Sigma)$.
Combining this with property \ref{aa}, we conclude that for $\xi \in \partial D^m$ and $s \in [0,1]$, we have $(\Phi(\xi)(s))^*(g(\xi)) \in \RR_{> 0}^{\Gamma}(\hat M)$.

Now let $\xi \in D^m$.
Recall that the metric $g(\xi) = g_1(\xi) \sqcup g_2(\xi) \in \RR_{>0}^{\Gamma, \Sigma}(\hat M)$ satisfies property~\ref{zweitens} of Lemma~\ref{charglatt}. 
We claim that for all $s \in [0,1]$, we have that
\[
    \Phi(\xi)(s)^*(g_1(\xi))\,  \sqcup \, \Phi(\xi)(s)^*(g_2(\xi)) \in \RR^{\Gamma, \Sigma}_{>0}(\hat M) 
\]
and that this metric satisfies property~\ref{zweitens} of Lemma~\ref{charglatt}.
The first assertion holds because the defining properties   in \eqref{def_sing} are invariant under the pull back of metrics along diffeomorphisms of $\mathcal{M}$ fixing $S$.

In order to verify property~\ref{zweitens} of Lemma~\ref{charglatt}, let $\eps > 0$ such that the normal exponential map along $\partial \mathcal{M}$ with respect to $g_1(\xi) \sqcup g_2(\xi)$ induces a $\Gamma$-equivariant diffeomorphism
\[
\Psi \colon   [0,\eps) \times \partial \mathcal{M} \stackrel{\approx}{\rightarrow} U^{g_1(\xi)}_\eps \sqcup U^{g_2(\xi)}_\eps
\]
as in \eqref{collar}.
Let $s \in [0,1]$. 
Then the normal exponential map along $\partial \mathcal{M}$ with respect to $ \Phi(\xi)(s)^*(g_1(\xi))\,  \sqcup \, \Phi(\xi)(s)^*(g_2(\xi))$ induces a $\Gamma$-equivariant diffeomorphism
\[
   \Psi_s \colon  [0,\eps) \times \partial \mathcal{M}  \stackrel{\approx}{\longrightarrow} U^{\Phi(\xi)(s)^*(g_1(\xi))}_{\eps} \,  \sqcup \, U^{\Phi(\xi)(s)^*(g_2(\xi))}_{\eps} .
\]
We have $\Psi_0 = \Psi$.
Furthermore, since $\Phi(\xi)(s)$ induces an isometry 
\begin{align*}
   \big( U^{\Phi(\xi)(s)^*(g_1(\xi))}_{\eps} &\,   \sqcup \,   U^{\Phi(\xi)(s)^*(g_2(\xi))}_{\eps} ,  \Phi(\xi)(s)^*(g_1(\xi))\,   \sqcup \, \Phi(\xi)(s)^*(g_2(\xi)) \big) \\
                & \cong \big( U^{g_1(\xi)}_\eps \sqcup U^{g_2(\xi)}_\eps , g_1(\xi) \sqcup g_2(\xi) \big),
\end{align*}
we have
\begin{equation} \label{covariance}
        \Phi(\xi)(s) \circ \Psi_s = \Psi.
\end{equation}
As in \eqref{Gauss}, we write, using the diffeomorphisms $\Psi$ and $\Psi_s$,
\[
   g_j (\xi)= dt^2 + \big(g_j(\xi)\big)_t , \qquad  \Phi(\xi)(s)^*(g_j(\xi)) = dt^2 +  \big(\Phi(\xi)(s)^*(g_j(\xi))\big)_t \, , 
 \]
 where $\big(g_j(\xi)\big)_t$ and  $(\big(\Phi(\xi)(s)^*(g_j(\xi))\big)_t$, $t \in [0, \eps)$, are smooth families of $\Gamma$-invariant metrics on $\partial M_j$.
 Using \eqref{covariance}, we obtain
 \[
    (g_j)_t = \big(\Phi(\xi)(s)^*(g_j(\xi))\big)_t \quad \textrm{for } t \in [0,\eps). 
 \]
Consequently the parity relation  required in property \ref{zweitens} of Lemma~\ref{charglatt} is indeed satisfied for $\Phi(\xi)(s)^*(g_1(\xi))\,  \sqcup \, \Phi(\xi)(s)^*(g_2(\xi))$. 

Now set $\mathcal{P} \colon D^m \times [0,1] \to \RR^{\Gamma, \Sigma}_{>0}(\hat M)$, $\mathcal{P}(\xi, s) = (\Phi(\xi)(s))^*(g(\xi))$. 
Then $\mathcal{P}(\xi, 0) = g(\xi)$ for all $\xi \in D^m$ and $\mathcal{P}(\xi,s) \in \RR^{\Gamma}_{>0}(\hat M)$ for all $(\xi,s) \in \partial D^m \times [0,1]$.
Furthermore for all $\xi \in D^m$, the metric $\mathcal{P}(\xi,1)$ satisfies properties \ref{erstens} and \ref{zweitens} of Lemma \ref{charglatt}, where property \ref{erstens} is ensured by property \ref{Be} above and property \ref{zweitens} was discussed in the previous paragraph.
In particular, $\mathcal{P}(\xi,1) \in \RR^{\Gamma}_{>0}(\hat M)$.

The concatenation of $\mathcal{P}$ with the homotopy $f$ defines a homotopy $D^m \times [0,1] \to \mathbb{R}_{>0}^{\Gamma, \Sigma}(\hat M)$ starting at the map $g$ in \eqref{start_g}  and sending $(\partial D^m \times [0,1]) \cup (D^m \times \{1\})$ to $\mathbb{R}^{\Gamma}_{>0}(\hat M)$.
This concludes the proof of Proposition~\ref{desingularise}.
\end{proof}
 
\section{Proof of Theorem~\ref{mainB}}
\label{sec.proofmainB}

In this section we carry out the proof of Theorem~\ref{mainB}.
Let $\Gamma$ be a compact Lie group and let $M$ be a closed connected smooth $\Gamma$-manifold.
Suppose that $\Gamma$ contains a normal subgroup $H$ isomorphic to $S^1$ such that $M^H$ contains at least one fixed-point component of codimension $2$.
In particular, we have $\dim M \geq 2$.
In the following, we identify $H$ with $S^1$ by a fixed isomorphism.

Let $\Stab <\Gamma$ be the kernel of the given $\Gamma$-action on $M$, that is, the subgroup of elements acting trivially on $M$.
This is a closed and normal subgroup of $\Gamma$.
Now $S^1 / (\Stab  \cap S^1)$ acts effectively on $M$, it is a normal subgroup of $\Gamma/ \Stab $, and $M^{S^1} = M^{S^1/( \Stab  \cap S^1)}$ contains components of codimension $2$.
Thus, by replacing  $S^1$ with $S^1/(\Stab   \cap S^1)$ and $\Gamma$ with $\Gamma/\Stab $, we can assume without loss of generality that $S^1$ acts effectively on $M$. 

According to Remark~\ref{weaklyetc}, it suffices to show that $\mathscr{R}^{\Gamma}_{>0}(M)$ is weakly contractible.
To do this, let $m \geq 0$ and let
\[
   g \colon \partial D^m  \to   \RR^{\Gamma}_{>0}(M) 
\]
be a continuous map. 
This assumption is empty for $m = 0$.
We  have  to show that $g$ extends to a continuous map 
\[
   G \colon D^m \to \RR^{\Gamma}_{>0}(M). 
\]
For $m = 0$, this means that $\RR^{\Gamma}_{>0}(M)$ is non-empty.
Let $S \subset M^{S^1}$ be the union of fixed-point components of codimension $2$.
This is a $\Gamma$-invariant submanifold of $M$.
Let 
 \[
    \nu = (TM|_S) / TS \to S
 \]
be the (abstract) normal bundle of $S$.
The group $\Gamma$ acts by vector bundle automorphisms on $\nu$ and the subgroup $S^1$ by fiber-preserving vector bundle automorphisms.
Since the $S^1$-action on $\nu$ is non-trivial, it induces an orientation on $\nu$ by declaring the pairs of vectors $\big(v,\frac{d}{dt}|_{t=0}(e^{2\pi it}\cdot v)\big)$ to be positively oriented for $v\in\nu$, $v\ne0$.
Note that, in general, the induced action of $\Gamma$ on $\nu$ need not be orientation-preserving.

Let $g_\mathrm{ref}$ be a $\Gamma$-invariant reference Riemannian metric on $M$ and let
\[
      \Phi \colon \partial D^m \to C^{\infty}\big([0,1], \mathrm{Diff}^{\Gamma}(M,S)\big) 
\]
be the map constructed in Proposition~\ref{uniformnormal} for $g_\mathrm{ref}$ and for $\mathscr{W} := M$.
Define 
\[
\mathcal{P} \colon \partial D^m \to C^{\infty}\big([0,1], \mathscr{R}^{\Gamma}_{>0}(M)\big)
\]
 by
\[
       \mathcal{P}(\xi)(t) :=   \Phi (\xi)(t)^*(g(\xi)).
\]
We have $\mathcal{P}(\xi)(0) = g(\xi)$ for all $\xi$. 
Replacing $g(\xi)$ by $\mathcal{P}(\xi)(1)$, we can assume that there exist open $\Gamma$-invariant neighborhoods $U \subset \nu$ of the zero-section and $V \subset M$ of $S$ such that the normal exponential maps for $g(\xi)$  induce $\Gamma$-equivariant diffeomorphisms $U \stackrel{\approx}{\longrightarrow} V$ which are independent of $\xi$ and coincide with the diffeomorphism induced by the normal exponential map for  $g_\mathrm{ref}$.
Furthermore, we can assume that the bundle  metric on $\nu$ induced by $g(\xi)$ is independent of $\xi\in\partial D^m$ and coincides with the one induced by $g_\mathrm{ref}$.

The oriented bundle $\nu$ with this bundle metric carries an induced $\SO(2)$-action as in Example~\ref{ex.circlebundle}.

\begin{lemma} \label{lem:free} Under the standard identification 
\[
   S^1 \cong \SO(2), \qquad \exp(i \theta) \mapsto \begin{pmatrix}\cos\theta & -\sin\theta \\ \sin\theta & \cos\theta \end{pmatrix},
\]
the $S^1$-action on $\nu$ induced by the $\Gamma$-action on $M$ coincides with this $\SO(2)$-action.
\end{lemma}

\begin{proof}
Let $H < S^1$ is the closed subgroup for which $S^1/H$ is the maximal orbit type of the $S^1$-action on $M$ induced by the given $\Gamma$-action.
Then $H$ acts trivially on an open and dense subset of $M$, and therefore on the whole of $M$ as $M$ is connected.
This  implies $H=\{1\}$, using effectiveness of the $S^1$-action on $M$.
Hence, $S^1$ acts freely on an open and dense subset of $M$.
Its intersection with $V$ is an open and dense subset of $V$.
This is only possible if the $S^1$-action on $\nu$ has weight $\pm 1$.
By the choice of the orientation of $\nu$, it has weight $+1$.
\end{proof}

Denote the fiberwise norm for the bundle metric induced by $g_\mathrm{ref}$ on $\nu$ by $| \bullet |$.
For $\rho > 0$, let 
\[
    B_\rho(S)  = \{ | \eta | <  \rho \}  \subset \nu 
    \text{ and } 
    \bar B_\rho(S)   = \{ | \eta| \leq \rho \}  \subset \nu
\]
be the open and closed $\rho$-disk bundles of $\nu$ with respect to $| \bullet |$, respectively.
Similarly, let
\[
     B_\rho := \{ | \zeta | < \rho\} \subset \R^2
     \text{ and } 
     \bar B_\rho := \{ | \zeta | \leq  \rho\} \subset \R^2
\]
be the open and closed $\rho$-disks centered at the origin, respectively.

Let $\rho_0 > 0$ such that $B_{\rho_0}( S)  \subset U$.
We identify  $B_{\rho_0} (S)$ with the open $\rho_0$-neighborhood of $S$ in $V\subset M$ by the normal exponential map for $g_\mathrm{ref}$. 
For each $\xi \in \partial D^m$ and each $\eta \in \nu$ with $|\eta|=1$, the radial line $[0,\rho_0) \ni t \mapsto t \eta \in B_{\rho_0}(S)$ is a unit speed geodesic with respect to $g(\xi)$.

\begin{notation} \label{bez}
For  $\xi \in \partial D^m$, the metric $g(\xi)$ induces 
\begin{enumerate}[\myicon]
\item  
a $\Gamma$-invariant Riemannian metric $g_S(\xi)$ on $S \subset M$,
\item 
a distribution $\mathscr{H}(\xi)$ on the total space $\nu$ corresponding to the normal connection on $\nu$ for $g(\xi)$, see Examples~\ref{ex.vectorbundle} and \ref{ex.normalbundle}.
\end{enumerate}
Note that $\mathscr{H}(\xi)$ is $\Gamma$-invariant and tangential to $\partial B_{\rho}(S)$ for all $\rho >0$.
Along $S$, the distribution $\mathscr{H}(\xi)$ coincides with $TS$.
In particular, $\mathscr{H}(\xi)$ is a horizontal distribution for the submersion $\nu \to S$.
\end{notation}

Equip  $\bar B_{\rho} \subset \R^2$ with the usual $\O(2)$-action and $\mathscr{R}^{\O(2)}(\bar B_\rho)$ with the trivial $\Gamma$-action. 
For any $\xi \in \partial D^m$, any $\rho > 0$ and any smooth $\Gamma$-invariant map 
\[
   \vartheta \colon S \to \mathscr{R}^{\O(2)}(\bar B_\rho),
\]
we can construct a new metric $\Theta(\xi)$ on $\bar B_\rho(S)$ as follows.
We make $\mathscr{H}(\xi)$ and the fibers of $\nu$ orthogonal.
On $\mathscr{H}(\xi)$, we take the metric making $d\pi|_{\mathscr{H}(\xi)_p}\colon\mathscr{H}(\xi)_p\to T_{\pi(p)}S$ an isometry at each point $p\in \bar B_{\rho}(S)$.
To define the metric along the fibers of $\nu$, we choose a linear isometry $T_p\nu_{\pi(p)}\cong\nu_{\pi(p)} \to \R^2$ and pull back the metric $\vartheta(\pi(p))$.
Since $\vartheta(\pi(p))$ is $\O(2)$-invariant, this definition does not depend on the choice of the isometry.
And since $\vartheta$ and $\mathscr{H}(\xi)$ are $\Gamma$-invariant, the metric $\Theta(\xi)$ is $\Gamma$-invariant.
In this way, we have constructed a $\Gamma$-equivariant Riemannian submersion
\begin{equation} \label{induced_sub}
  (\bar B_\rho(S)  , \Theta(\xi)) \to (S, g_S(\xi)) 
\end{equation}
whose fiber over $x \in S$ is isometric to $(\bar B_\rho, \vartheta(x))$.
This submersion has totally geodesic fibers, see Example~\ref{ex.vectorbundle}.

\begin{figure}[ht]
\begin{overpic}[scale=0.15, trim=-20mm 0 0 0]{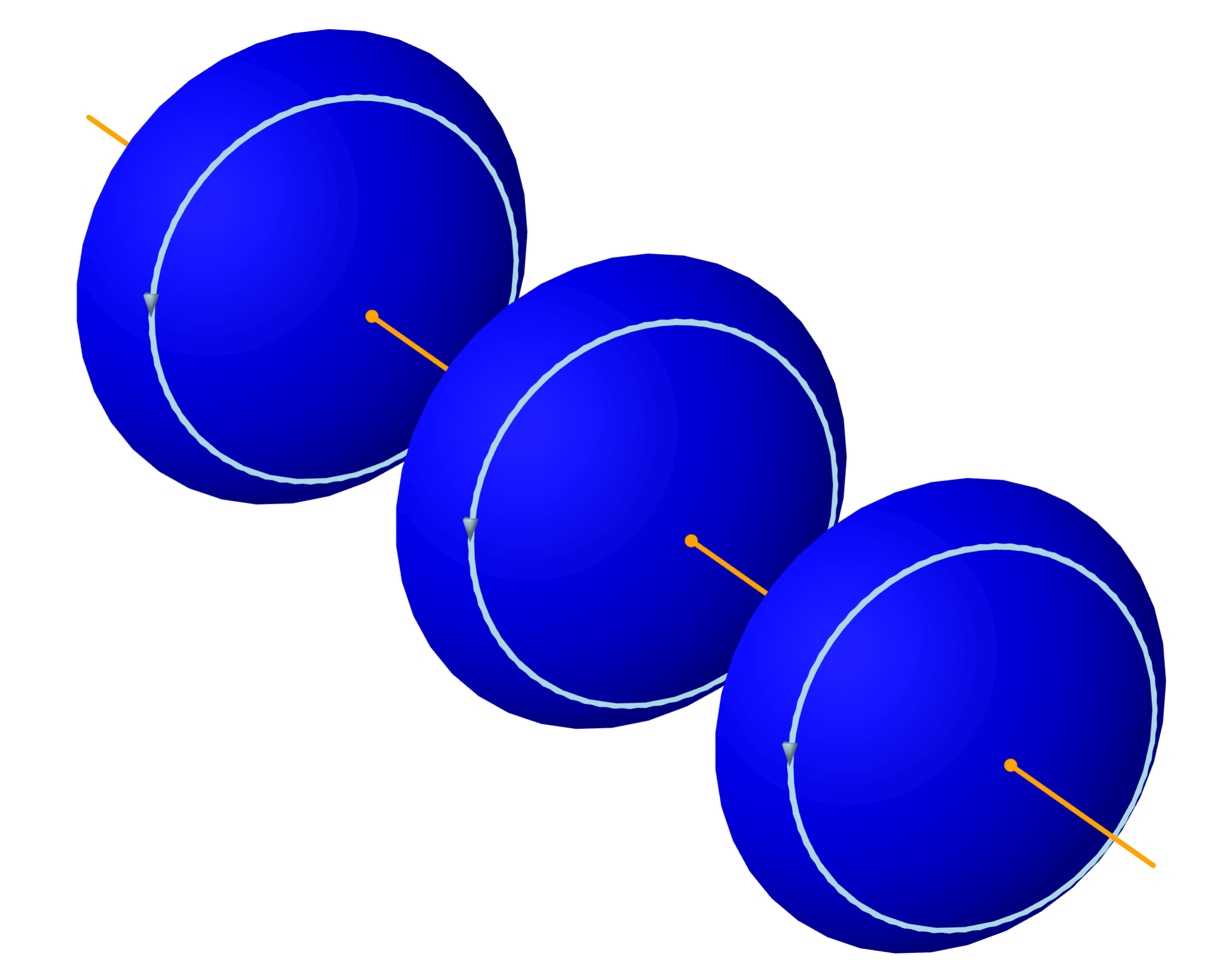}
\put(97,8){$\textcolor{gelb}{S}$}
\end{overpic}
\caption{Illustration of the metric $\Theta_{\sigma}(\xi)$ on $\bar B_{\rho}(S)$ for some fixed $\xi \in \partial D^m$.
The fixed-point component $S$ is indicated by the yellow axis.
The blue caps are fibers of the submersion~\eqref{subF}.
Three orbits of $\Gamma=S^1$ are shown in light blue.}

\label{fig:start}
\end{figure}

\begin{example} \label{ex.specialcase}
Let $\sigma > 0$ and let $0 < \rho < \sigma \pi$.
In polar coordinates $(r,\varphi)$ on $\R^2$, let $\vartheta_{ \sigma} \in \RR^{\O(2)}_{>0} (\bar B_{\rho})$ be given by  
\[
     \vartheta_{\sigma} = dr^2 + \sigma^2 \sin\left( \tfrac{r}{\sigma} \right)^2 d\varphi^2.
\]
In other words, $(\bar B_{\rho}, \vartheta_{\sigma})$ is isometric to the closed $\rho$-disk in the $2$-sphere of radius $\sigma$.
The boundary of this disk has mean curvature $\cot(\nicefrac{\rho}{\sigma})/\sigma$ which is positive if $\rho<\frac{\sigma \pi}{2}$.
For $\vartheta=\vartheta_{\sigma}$, the submersion~\eqref{induced_sub} specializes to a Riemannian submersion denoted
\begin{equation} \label{subF} 
  (\bar B_\rho(S)  , \Theta_{\sigma}(\xi)) \to (S, g_S(\xi)).
\end{equation} 
It has horizontal distribution $\mathscr{H}(\xi)$ and fibers isometric to  $(\bar B_\rho  , \vartheta_{\sigma})$. 
For $\Gamma = S^1$, the situation is depicted in Figure~\ref{fig:start}.
\end{example}

Next, we bring the metrics $g(\xi)$ into a standard form near $S$.
We work with the identification of $B_{\rho_0}(S) \subset \nu$ with the open $\rho_0$-neighborhood of  $S$ in $M$ with respect to $g_\mathrm{ref}$ (hence with respect to all $g(\xi)$), as explained before Notation \ref{bez}.

\begin{proposition} \label{deform2} 
There exists  $\sigma  > 0$ and  $0 < \rho <\min\{  \frac{\sigma \pi}{2}, \rho_0\}$ and a continuous map $\mathcal{P} \colon \partial D^m \to  C^{\infty}\big([0,1] ,  \mathscr{R}^{\Gamma}_{>0}(M)\big)$ such that
\begin{enumerate}[(i)] 
    \item \label{submersion1} $\mathcal{P}(\xi)(0) = g(\xi)$;
   \item  \label{submersion2}  $\mathcal{P}(\xi)(1)|_{\bar B_\rho(S)  } =\Theta_{\sigma}(\xi)$. 
 \end{enumerate} 
 \end{proposition}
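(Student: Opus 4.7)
The plan is to build the homotopy as a cutoff interpolation between $g(\xi)$ and $\Theta_\sigma(\xi)$, exploiting two quantitative inputs: $\Theta_\sigma(\xi)$ has overwhelmingly positive scalar curvature for small $\sigma$, while the two metrics agree to second order along $S$.

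First, I would verify that $\Theta_\sigma(\xi)$ is highly positively curved on $\bar B_\rho(S)$. By Vilms's theorem the Riemannian submersion \eqref{subF} has totally geodesic fibers, so O'Neill's formula \eqref{oneill} yields
\[
\scal_{\Theta_\sigma(\xi)} = \scal_{g_S(\xi)}\circ\pi + \tfrac{2}{\sigma^2} - |A|^2,
\]
where $2/\sigma^2$ is the scalar curvature of the $\sigma$-sphere and $|A|^2$ is controlled by the curvature of the normal connection of $g(\xi)$, independently of $\sigma$. Moreover, by Proposition~\ref{meancurv1}, $\partial B_\rho(S)$ is strictly mean-convex with respect to $\Theta_\sigma(\xi)$ whenever $0<\rho<\sigma\pi/2$. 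By compactness of $\partial D^m$, we can therefore pick $\sigma_0>0$ so that $\scal_{\Theta_\sigma(\xi)}\geq 1/\sigma^2$ on $\bar B_\sigma(S)$ for every $\xi\in\partial D^m$ and every $\sigma\leq\sigma_0$.

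Next, I would compare $g(\xi)$ and $\Theta_\sigma(\xi)$ near $S$. Both metrics are $\Gamma$-invariant and $S$ is the $S^1$-fixed-point set, so $S$ is totally geodesic in both $(M,g(\xi))$ and $(M,\Theta_\sigma(\xi))$. Since the fibers of $\Theta_\sigma(\xi)$ are totally geodesic and consist of radial lines in $\nu$, the normal exponential maps of both metrics along $S$ coincide with the identification fixed in Notation~\ref{bez}. Hence in the common Gauss-type decomposition $g(\xi)=dr^2+h_r^g$ and $\Theta_\sigma(\xi)=dr^2+h_r^\Theta$, the tensors $h_r^g$ and $h_r^\Theta$ agree together with their first $r$-derivatives at $r=0$, so
\[
\Theta_\sigma(\xi)-g(\xi) = O(r^2)
\]
uniformly on $\bar B_\rho(S)$ in any $C^k$-norm, with constants uniform in $\xi\in\partial D^m$ by compactness.

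Given these inputs, I would fix $\sigma\leq\sigma_0$ and $\rho\in(0,\sigma\pi/2)$, pick a smooth cutoff $\chi\colon[0,\infty)\to[0,1]$ with $\chi\equiv 1$ on $[0,1/2]$ and $\chi\equiv 0$ outside $[0,1)$, and define
\[
\mathcal{P}(\xi)(s) := g(\xi) + s\,\chi(r/\rho)\cdot\bigl(\Theta_\sigma(\xi)-g(\xi)\bigr)
\]
on $B_{\rho_0}(S)$, extended by $g(\xi)$ on the complement. This family is $\Gamma$-invariant and continuous in $\xi$, satisfies $\mathcal{P}(\xi)(0)=g(\xi)$, and has $\mathcal{P}(\xi)(1)$ equal to $\Theta_\sigma(\xi)$ on $\bar B_{\rho/2}(S)$, which after rescaling $\rho$ yields \ref{submersion2}. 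Because $\Theta_\sigma(\xi)-g(\xi)=O(r^2)$, the cutoff perturbation and its first two derivatives have $C^0$-sizes $O(\rho^2)$, $O(\rho)$ and $O(1)$ respectively, uniformly in $\xi$, $s$, and $\rho\leq\sigma$. Hence the induced change of the scalar curvature is bounded in absolute value by some constant $C$ independent of $\sigma$, and choosing $\sigma_0$ small enough that $1/\sigma^2>C$ ensures $\scal_{\mathcal{P}(\xi)(s)}>0$ throughout the homotopy. The main obstacle is precisely this positivity estimate: without the $O(r^2)$-agreement forced by total geodesity of $S$, the cutoff's second derivatives would produce uncontrolled $1/\rho^2$ contributions to the scalar curvature that the fiber curvature $2/\sigma^2$ could not absorb. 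Should scaling subtleties complicate the direct estimate, a fallback is to invoke the equivariant local flexibility lemma of Appendix~\ref{sec:EqFlexLemma} to interpolate within the open psc cone along~$S$.
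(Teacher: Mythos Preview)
Your proposal correctly isolates the two essential inputs---O'Neill gives $\scal_{\Theta_\sigma(\xi)}\sim 2/\sigma^2$, and the $1$-jets of $g(\xi)$ and $\Theta_\sigma(\xi)$ agree along $S$---and your fallback (the equivariant local flexibility lemma) is precisely the paper's route. However, the direct cutoff argument that you present as the main line has a genuine gap.

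The problem is the claim that the second derivatives of the cutoff perturbation, and hence the induced change of scalar curvature, are bounded by a constant $C$ \emph{independent of $\sigma$}. They are not. The very fact that $\scal_{\Theta_\sigma}=2/\sigma^2+O(1)$ along $S$, together with the $1$-jet agreement, forces the second derivatives of $\Theta_\sigma(\xi)$ in smooth (non-singular) fiber coordinates to be of order $1/\sigma^2$: in Cartesian coordinates $(x,y)$ on the fiber one computes $\vartheta_\sigma-(dx^2+dy^2)=-\tfrac{1}{3\sigma^2}(y\,dx-x\,dy)^2+O(r^4/\sigma^4)$, whose second partials at the origin are $\sim 1/\sigma^2$. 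Thus the hidden constant in your $O(r^2)$ estimate is itself $\sim 1/\sigma^2$, so the cutoff perturbation has $C^2$-size $O(1/\sigma^2)$, not $O(1)$. The error $C$ then scales exactly like the fiber curvature you hope will absorb it, and the condition $1/\sigma^2>C$ becomes vacuous. Concretely, in the transition annulus $\rho/2<r<\rho$ the metric $\mathcal{P}(\xi)(s)$ is $C^1$-close to $g(\xi)$, so its scalar curvature is $\scal_{g(\xi)}+O(1/\sigma^2)$ with no positive $1/\sigma^2$ floor available; nothing prevents this from being negative.

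The paper avoids this by treating the flexibility lemma as the main tool rather than a fallback. One fixes $\sigma$ just small enough that $\Theta_\sigma(\xi)$ has positive scalar curvature (no further shrinking of $\sigma$ is needed), forms the linear interpolation $\bar g(\xi)(t)=(1-t)g(\xi)+t\Theta_\sigma(\xi)$ on $B_\rho(S)$, and observes that along $S$---where the $1$-jets agree---scalar curvature is affine-linear in the second derivatives, hence $\scal_{\bar g(\xi)(t)}|_S=(1-t)\scal_{g(\xi)}|_S+t\,\scal_{\Theta_\sigma(\xi)}|_S>0$. Compactness of $\partial D^m\times[0,1]$ then gives $\bar g(\xi)(t)\in\RR^\Gamma_{>0}(B_\rho(S))$ for $\rho$ sufficiently small, and Theorem~\ref{thm:FlexLem1} (with $k=2$, so the required $(k-1)$-jet hypothesis is exactly the $1$-jet agreement) globalizes this local family to $M$ while remaining in the open psc relation. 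Your outline of the fallback is on the right track but still needs this affine-linearity observation to verify that the interpolation actually solves the relation near $S$ before the lemma can be applied.
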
 

\begin{proof} 
For $\sigma > 0$,  the homothety $B_{\nicefrac{\pi}{2} } \to B_{\nicefrac{\sigma \pi}{2}}$, $x \mapsto \sigma x$, induces an $\O(2)$-equivariant isometry  $(B_{\nicefrac{\pi}{2} }  , \sigma^2 \vartheta_{1}) \cong (B_{\nicefrac{\sigma \pi}{2}}, \vartheta_{\sigma})$ and hence a $\Gamma$-equivariant isometry $(B_{\nicefrac{\pi}{2}}(S),  \Theta_{1}(\xi)_{\sigma^2}) \cong (B_{\nicefrac{\sigma \pi}{2}}(S), \Theta_{\sigma}(\xi))$.
Here the subscript $\sigma^2$ refers to the canonical variation~\eqref{can_var}.
Since
\[
      \lim_{\sigma \to 0} \scal_{\vartheta_{\sigma}} = \lim_{\sigma \to 0}  2 \sigma^{-2} = \infty 
\]
it follows from  \eqref{oneill} that there exists $\sigma  > 0$ such that for all $0 < \rho < \frac{\sigma \pi}{2}$ and $\xi \in \partial D^m$, the submersion metric $\Theta_{ \sigma}(\xi)$ on $B_\rho(S) $ has positive scalar curvature.
Let  $0 < \rho < \min\{  \frac{\sigma \pi}{2}, \rho_0\}$.

In order to apply the equivariant local flexibility Theorem~\ref{thm:FlexLem1}, we claim that for all  $\xi \in \partial D^m$ the $1$-jets of $g(\xi)$ and $\Theta_{\sigma}(\xi)$ coincide along $S$, i.e., 
\begin{equation} \label{1jet}
         j^1 g(\xi) |_{S}  = j^1  \Theta_{ \sigma}(\xi) |_{S}  . 
\end{equation}
Indeed, for each $\xi \in \partial D^m$, the metrics $g(\xi)$ and $\Theta_{ \sigma}(\xi)$ induce the same metric on $S$, and the normal exponential maps for $g(\xi)$ and $\Theta_{\sigma}(\xi)$ along $S$, which are equal to the identity on $B_{\rho}(S)$, coincide on $B_{\rho}(S)$.
Moreover, by the choice of the metric $\vartheta_{\sigma}$ in Example  \ref{ex.specialcase},  the bundle metrics on the abstract normal bundle $\nu=(TB_{\rho}(S)|_S)/TS \to S$ induced by $\Theta_{\sigma}(\xi)$ and $g(\xi)$ coincide.
Moreover, $S$ consists of fixed-point components of an $S^1$-action which is by isometries for both $g(\xi)$ and $\Theta_{\sigma}(\xi)$.
Hence, $S$ is totally geodesic for both metrics.
Finally, the normal connections on $\nu$ along $S$ for $g(\xi)$ and $\Theta_{\sigma}(\xi)$ coincide because they have the same horizontal distribution $\mathscr{H}(\xi)$.
Thus, Lemma~\ref{lem.jet1g} implies \eqref{1jet}.

Let $\mathrm{Sym}^2 (B_\rho(S) ) \to B_\rho(S) $ denote the bundle of symmetric $(0,2)$-tensors over $B_\rho(S) $.  
Consider the continuous map
\[
\bar g \colon \partial D^m \to C^{\infty}\big( [0,1],  \mathrm{Sym}^{2}(B_\rho(S) )\big),
\quad
\bar g(\xi)(t) := (1-t) g(\xi) + t \Theta_{\sigma}(\xi). 
\]
For sufficiently small $\rho$, each $\bar g(\xi)(t)$ is positive definite, i.e., a Riemannian metric.
Both $g(\xi)$ and $\Theta_{\sigma}(\xi)$ have positive scalar curvature and the $1$-jets coincide along $S$ by \eqref{1jet}.
Since scalar curvature depends affine linearly on the second derivatives of the metric, each $\bar g(\xi)(t)$ has positive scalar curvature along $S$ for all $\xi \in \partial D^m$ and all $t \in [0,1]$.
Hence, we get $\bar g(\xi)(t) \in  \RR^{\Gamma}_{>0}( B_{\rho}(S) )$ for all $\xi \in \partial D^m$ and $t \in [0,1]$, possibly after passing to a still smaller $\rho$.

We apply Theorem~\ref{thm:FlexLem1} to $E = \mathrm{Sym}^2 (B_\rho(S) )$ and the open partial differential relation of being a Riemannian metric of positive scalar curvature.
After possibly shrinking $\rho$ once more, we find a continuous map $\mathcal{P} \colon \partial D^m \to  C^{\infty}\big([0,1] ,  \mathscr{R}^{\Gamma}_{>0}(M)\big)$ with  properties~\ref{submersion1} and~\ref{submersion2}. 
\end{proof} 

By Proposition~\ref{deform2}, we can henceforth work under the following assumption on $g$: 

\begin{assumption} \label{ass} 
There exist $\sigma > 0$ and $0 < \rho <  \frac{\sigma \pi}{2}$ such that for all $\xi \in \partial D^m$, we have 
\begin{equation} \label{annahme} 
     g(\xi)|_{\bar B_{ \rho}(S) } = \Theta_{\sigma}(\xi) .
\end{equation}
\end{assumption}

We fix $\sigma$ and $\rho$ as in this assumption (if $m=0$, then we choose $\sigma > 0$ and $0 < \rho < \frac{\sigma \pi}{2}$ arbitrarily), and set
\[
   \breve{M} := M \setminus B_\rho(S)  . 
\]
This is a compact connected $\Gamma$-manifold with non-empty boundary $\partial \breve{M} = \partial \bar B_\rho(S) $. 
By Lemma~\ref{lem:free}, the induced $S^1$-action on $\partial \breve{M}= \partial \bar B_\rho(S)$ is free.

For $\xi \in \partial D^m$, the metric $g(\xi)$ restricts to a metric $\breve g(\xi)$ on $\breve M$ and induces a metric $\breve g^{\partial}(\xi)$ on $\partial \breve{M}$.
The metric $\breve g^{\partial}(\xi)$ coincides with the one induced on $\partial \bar B_\rho(S)$ by the submersion metric $\Theta_{\sigma}(\xi)$ from  \eqref{subF}.

By Theorem~\ref{mainA}, we can extend $\breve g$ to a continuous map $\breve G \colon D^m \to \mathscr{R}^{\Gamma}_{>0}(\breve M)$ satisfying $\breve G|_{\partial D^m}= \breve g$.
For $m = 0$, this amounts to the existence of some metric in $\mathscr{R}^{\Gamma}_{>0}(\breve M)$.
Without loss of generality, we can assume that for $\tfrac{1}{2} \leq |\xi| \leq 1$, the metric $\breve G(\xi)$ depends only on $\tfrac{\xi}{|\xi|}$, i.e., $\breve G(\xi)=\breve g(\nicefrac{\xi}{|\xi|})=g(\nicefrac{\xi}{|\xi|})|_{\breve M}$.

For $\xi \in D^m$ and $x\in\partial\breve{M}$, let $\ell_\xi(x)$ denote the length of the orbit $S^1\cdot x\subset \partial\breve{M}$ with respect to the metric $\breve G(\xi)$.
Then $\ell_\xi$ is a smooth $\Gamma$-invariant function on $\partial \breve{M}$ and the map $D^m\to C^{\infty,\Gamma}(\partial\breve{M})$, $\xi\mapsto \ell_\xi$ is continuous.

Assume temporarily $n=\dim(M)\ge3$ and let $\varphi_\xi$ be the unique solution to the boundary value problem
\begin{equation}
\begin{cases}
\Delta_{\breve G(\xi)} \varphi_\xi = 0 & \text{on } \breve M,\\
\varphi_\xi = \Big(\frac{2\pi\sigma\sin(\frac{\rho}{\sigma})}{\ell_\xi}\Big)^{\frac{n-2}{2}} & \text{on } \partial \breve M.
\end{cases}
\label{BVP}
\end{equation}
By elliptic regularity, $\varphi_\xi$ is a smooth $\Gamma$-invariant function on $\breve M$ and it depends continuously on $\xi$ with respect to the $C^\infty$-topology.
Since $\ell_\xi$ is positive, the strong maximum principle implies that $\varphi_\xi$ is positive on $\breve M$.
For the conformally related metric $\hat{G}(\xi)=\varphi_\xi^{\frac{4}{n-2}}\breve G(\xi)$, we have
\[
\scal_{\hat{G}(\xi)}\cdot \varphi_\xi^{\frac{n+2}{n-2}}
=
\big(4\tfrac{n-1}{n-2}\Delta_{\breve G(\xi)} + \scal_{\breve{G}(\xi)}\big)\,\varphi_\xi
=
\scal_{\breve{G}(\xi)}\cdot\varphi_\xi ,
\]
see e.g.\ \cite{LP}*{Eq.~(1.1)}.
Since $\scal_{\breve{G}(\xi)}$ is positive, so is $\scal_{\hat{G}(\xi)}$.
The length of the $S^1$-orbits in $\partial\breve{M}$ with respect to $\hat{G}(\xi)$ is equal to $2\pi\sigma\sin(\frac{\rho}{\sigma})$ for all $\xi\in D^m$.
If $|\xi|\in[\frac12,1]$, then $\varphi_\xi=1$ on $\partial\breve{M}$.
Thus, the solution $\varphi_\xi$ to the boundary value problem~\eqref{BVP} is given by $\varphi_\xi=1$ on $\breve{M}$ and we have $\hat{G}(\xi)=\breve{G}(\xi)$ in this case.

If $n=2$, then replace \eqref{BVP} by the boundary value problem
\begin{equation*}
\begin{cases}
\Delta_{\breve G(\xi)} \varphi_\xi = 0 & \text{on } \breve M,\\
\varphi_\xi = \log\Big(\frac{\ell_\xi}{2\pi\sigma\sin(\frac{\rho}{\sigma})}\Big) & \text{on } \partial \breve M,
\end{cases}
\end{equation*}
and consider metric $\hat{G}(\xi)=e^{-2\varphi_\xi}\cdot\breve G(\xi)$.
We then have $\scal_{\hat{G}(\xi)}=e^{2\varphi_\xi}\cdot\scal_{\breve{G}(\xi)}$ and we can argue as for $n\ge3$.

Replacing $\breve{G}$ by $\hat{G}$, we can make the following assumption in addition to Assumption~\ref{ass}:

\begin{assumption}\label{ass2}
For every $\xi\in D^m$, the lengths of the $S^1$-orbits in $\partial\breve{M}$ with respect to $\breve{G}(\xi)$ are equal to $2\pi\sigma\sin(\frac{\rho}{\sigma})$.
Moreover, for $\tfrac{1}{2} \leq |\xi| \leq 1$, the metric $\breve G(\xi)$ depends only on $\tfrac{\xi}{|\xi|}$ and is hence equal to $g(\nicefrac{\xi}{|\xi|})|_{\breve M}$.
\end{assumption}

The  map $\breve G$ induces a continuous map 
\begin{equation} \label{defbound} 
    \breve G^{\partial} \colon D^m \to \mathscr{R}^{\Gamma}(\partial \breve M) . 
\end{equation} 
Moreover, we obtain a $\Gamma$-invariant distribution $\mathscr{H}(\xi)\subset T\partial \breve{M}$ by taking the orthogonal complements (w.r.t.\ $\breve G(\xi)$) of the $S^1$-orbits.
The projection $\pi\colon \nu\to S$ restricts to a $\Gamma$-equivariant submersion 
\begin{equation}
\partial \breve{M}=\partial \bar B_\rho(S) \to S.
\label{bdysub}
\end{equation}
Since $\mathscr{H}(\xi)$ and $\breve G(\xi)$ are $\Gamma$-invariant, we get a uniquely defined $\Gamma$-invariant metric $G_S(\xi)$ on $S$ such that \eqref{bdysub} is a $\Gamma$-equivariant Riemannian submersion with horizontal distribution $\mathscr{H}(\xi)$.
For $\xi \in \partial D^m$ we have $G_S(\xi) = g_S(\xi)$, as introduced in Notation~\ref{bez}.

For $\xi \in D^m$, the distribution $\mathscr{H}(\xi)  \subset T\partial \breve{M} = T\partial \bar B_\rho(S)$ induces a metric linear connection on $\nu$, see Example~\ref{ex.circlebundle}. 
Since $\mathscr{H}(\xi)$ is $\Gamma$-invariant, this metric linear connection on $\nu$ is $\Gamma$-equivariant.
Together with the metric $G_S(\xi)$ on $S$ and the metric $\vartheta_{\sigma}$ on $\bar B_\rho$ we obtain a Riemannian submersion with totally geodesic fibers
\begin{equation} \label{nongeod}
\left( \bar B_\rho ,\vartheta_{\sigma} \right)  
\hookrightarrow 
\left(\bar B_\rho(S) ,  \Theta_{\sigma}(\xi)\right)  
\stackrel{\pi}{\twoheadrightarrow} 
(S, G_S(\xi)), 
\end{equation}
see Example~\ref{ex.vectorbundle}.
For $\xi \in \partial D^m$, this is the submersion \eqref{subF}.
By construction, the metrics $\Theta_{\sigma}(\xi)$ and $G_S(\xi)$ are $\Gamma$-invariant and depend continuously on $\xi$.
Furthermore, the submersion map $\pi$ is $\Gamma$-equivariant.

For $\tfrac{1}{2} \leq |\xi| \leq 1$, the metric $\Theta_{\sigma}(\xi)$ depends only on $\tfrac{\xi}{|\xi|}$, i.e., $\Theta_{\sigma}(\xi)=\Theta_{\sigma}(\xi / |\xi|) $, and for every $\xi \in D^m$, the metrics on  $\partial \breve M = \partial \bar B_\rho(S) $ induced by $\breve G(\xi)$ and by $\Theta_{\sigma}(\xi)$ coincide.
However, for $|\xi| < \frac{1}{2}$, the metric $\Theta_{\sigma}(\xi)$ may not be of positive scalar curvature.
For $\Gamma = S^1$, the situation near $\bar B_{\rho}(S)$ is depicted in Figure~\ref{fig.deformation1}.

\begin{figure}[ht]
\begin{overpic}[scale=0.2, trim=-20mm 0 0 0]{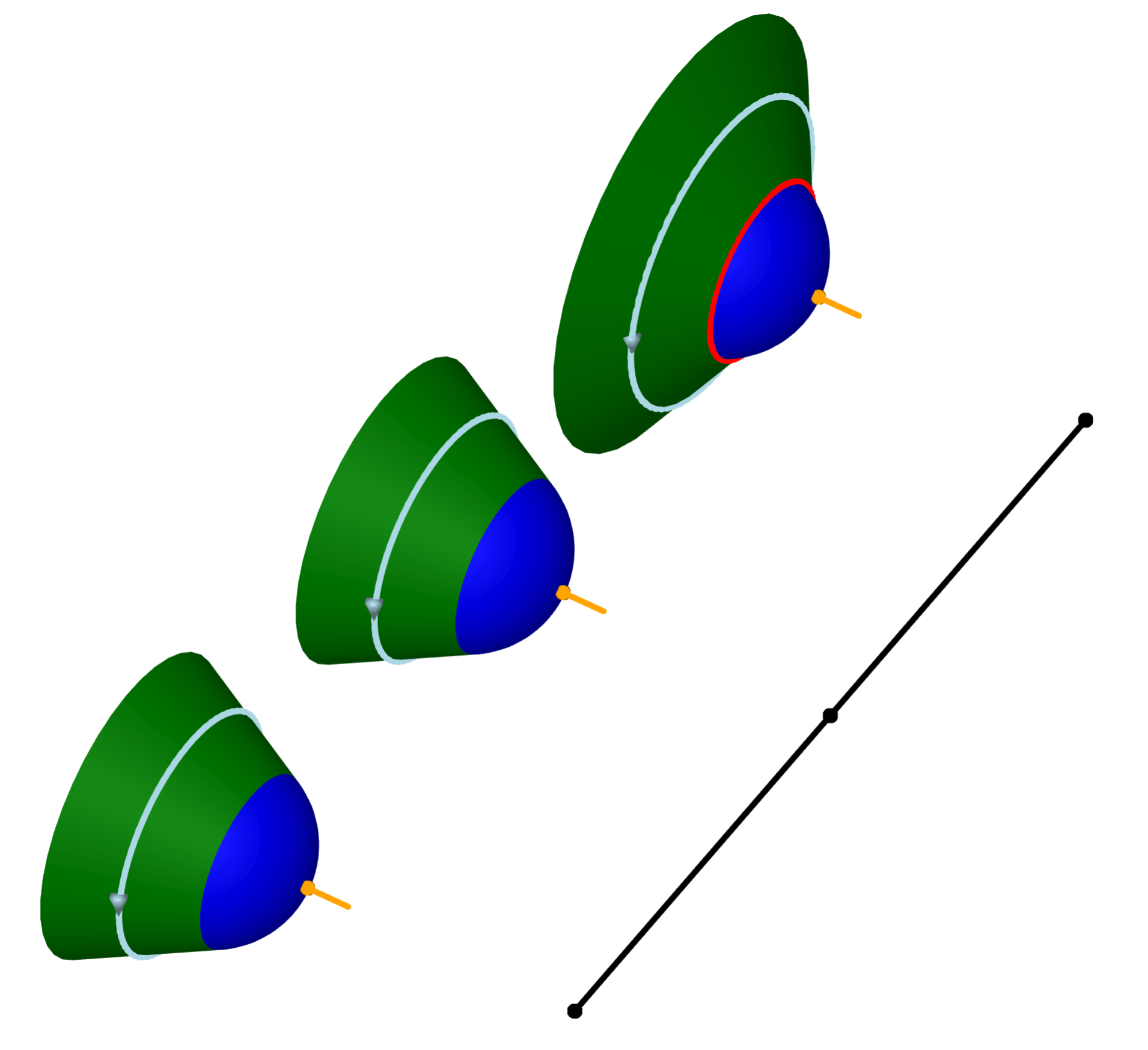}
\put(34,10){$\textcolor{gelb}{S}$}
\put(55,35){$\textcolor{gelb}{S}$}
\put(77,60){$\textcolor{gelb}{S}$}
\put(97,51){$|\xi|=0$}
\put(76,27){$|\xi|=\frac12$}
\put(55,2){$|\xi|=1$}
\end{overpic}
\caption{Illustration of the metrics $\Theta_{\sigma}(\xi) \cup \breve G(\xi)$ near $\bar B_{\rho}(S)$ for $|\xi| = 1$, $\tfrac{1}{2}$ and $0$.
The blue regions indicate $(\bar B_{\rho}(S), \Theta_{\sigma}(\xi))$ and the green regions indicate  $(\breve{M}, \breve {G}(\xi))$ near $\bar B_\rho(S)$.
Metrics in green are of positive scalar curvature. 
For $\xi = 0$,  the metric in blue may not be of positive scalar curvature and the sum of the mean curvatures of $\Theta_{\sigma}(\xi)$ and of $\breve G(\xi)$ with respect to the exterior normal may be negative (indicated in red).
One orbit of $\Gamma = S^1$ is shown in light blue.}

\label{fig.deformation1}
\end{figure}

\begin{proposition} \label{goodstuff}
For $\xi \in D^m$ and $s \geq 0$, consider the canonical variation (cf.\ \eqref{can_var})
\[
     \varphi(\xi , s)  := \Theta_\sigma(\xi)_{\exp(-s)} \in \mathscr{R}^{\Gamma}(\bar B_\rho(S) ). 
 \]
Then  the following assertions hold:
\begin{enumerate}[(i)]
\item \label{due} 
If $s \geq 0$ and $|\xi|\in[ \tfrac{1}{2} ,1]$, then the metric $ \varphi(\xi , s)$ has positive scalar curvature.
\item \label{tre} 
There exists $s_0 > 0$ such that for each $s \geq s_0$ and each $\xi \in D^m$, the metric $ \varphi(\xi , s)$ has positive scalar curvature.
\item \label{quattro} 
For every $\xi\in D^m$, the mean curvature of $\partial \bar B_\rho(S) $ with respect to  $\varphi(\xi , s)$ is given by $\exp(\nicefrac{s}{2})\cot(\nicefrac{\rho}{\sigma})/\sigma$.

In particular, it is monotonically increasing in $s$ and for every $H \in \R$ there exists $s_0 > 0$ such that it is bounded below by $H$ for every $s \geq s_0$ and $\xi \in D^m$.
\end{enumerate}
\end{proposition}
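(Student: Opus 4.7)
The strategy is to apply the O'Neill formula \eqref{oneill} for canonical variations of Riemannian submersions with totally geodesic fibers, combined with Proposition~\ref{meancurv1} for the mean-curvature computation. First, I would verify that $\varphi(\xi,s)=\Theta_\sigma(\xi)_{\exp(-s)}$ is a Riemannian submersion over $(S,G_S(\xi))$ with totally geodesic fibers isometric to $(\bar B_\rho, e^{-s}\vartheta_\sigma)$. As in the example preceding Proposition~\ref{deform2}, Vilms's theorem applies once the horizontal distribution $\mathscr{H}(\xi)$ is built inside the (contractible) affine space of $\Gamma$-invariant metric connections on $\nu$: the holonomy then lies in $SO(2)$ and acts isometrically on the rotationally symmetric metric $\vartheta_\sigma$. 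Moreover, one arranges the extension of $\mathscr{H}_1$ from $\partial D^m$ to $D^m$ in such a way that $\Theta_\sigma(\xi)$ depends only on $\xi/|\xi|$ for $|\xi|\in[\tfrac12,1]$, in parallel with the normalization already imposed on $\breve G$; under Assumption~\ref{ass} this gives $\Theta_\sigma(\xi)=g(\xi/|\xi|)|_{\bar B_\rho(S)}$ in that range.

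Granting this, \eqref{oneill} together with $\scal_{\vartheta_\sigma}=2/\sigma^2$ yields
\[
\scal_{\varphi(\xi,s)} \;=\; \scal_{G_S(\xi)}\circ\pi \;+\; e^{s}\cdot\tfrac{2}{\sigma^2} \;-\; e^{-s}\cdot|A_\xi|^2,
\]
where $A_\xi$ is the O'Neill tensor of $\Theta_\sigma(\xi)$. For \ref{due}, the radial normalization ensures $\scal_{\Theta_\sigma(\xi)}>0$ for $|\xi|\in[\tfrac12,1]$, and since the fiber scalar curvature is positive, the monotonicity of the canonical variation recorded after \eqref{oneill} gives $\scal_{\varphi(\xi,s)}\ge\scal_{\Theta_\sigma(\xi)}>0$ for every $s\ge 0$. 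For \ref{tre}, compactness of $D^m$ bounds $\scal_{G_S(\xi)}$ and $|A_\xi|^2$ uniformly in $\xi$, while the middle term $e^s\cdot 2/\sigma^2$ diverges; hence there exists $s_0>0$ with $\scal_{\varphi(\xi,s)}>0$ for all $s\ge s_0$ and all $\xi\in D^m$.

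For \ref{quattro}, the exterior unit normal of $\partial\bar B_\rho(S)$ is radial inside each fiber $\bar B_\rho\subset\nu_p$, hence vertical for $\pi$, and Proposition~\ref{meancurv1} identifies $H_{\varphi(\xi,s)}$ with the boundary mean curvature of $\partial \bar B_\rho$ inside $(\bar B_\rho, e^{-s}\vartheta_\sigma)$. On the unscaled fiber this equals $\cot(\rho/\sigma)/\sigma$, as noted just before Proposition~\ref{deform2}. Rescaling the fiber metric by $e^{-s}$ contracts lengths by $e^{-s/2}$ and therefore dilates the inverse-length quantity mean curvature by $e^{s/2}$, giving the asserted $e^{s/2}\cot(\rho/\sigma)/\sigma$, independently of $\xi$; monotonicity in $s$ and the bound $H_{\varphi(\xi,s)}\ge H$ for $s\ge s_0$ follow at once. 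The chief obstacle I anticipate is the totally-geodesic-fiber condition for the interpolated horizontal distribution $\mathscr{H}(\xi)$ over interior $\xi\in D^m$: one must verify that the interpolation between $\mathscr{H}_0(\xi)$ and $\mathscr{H}_1(\xi)$ can be realized by a family of $\Gamma$-invariant metric connections, so that Vilms's theorem still applies and the O'Neill formula with totally geodesic fibers is available.
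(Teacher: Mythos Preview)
Your proof is correct and follows essentially the same route as the paper's: both use the O'Neill formula~\eqref{oneill} for the scalar-curvature statements and Proposition~\ref{meancurv1} for the mean-curvature computation, with the rescaling of the fiber metric giving the factor $e^{s/2}$ in~\ref{quattro}. You are somewhat more explicit than the paper in two places: you spell out the displayed O'Neill identity (the paper argues qualitatively via monotonicity of $\scal_{g_\tau}$ in $\tau$), and you note that the extension of $\mathscr{H}_1$ over $D^m$ should be made radially constant for $|\xi|\in[\tfrac12,1]$, which is exactly what justifies the paper's claim $\Theta_\sigma(\xi)=g(\nicefrac{\xi}{|\xi|})|_{\bar B_\rho(S)}$ in that range.

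Your flagged obstacle about totally geodesic fibers is a fair point and applies equally to the paper (note the suggestive label \texttt{nongeod} for~\eqref{nongeod}). It dissolves once you observe that $\mathscr{H}_0(\xi)$ itself already arises from a metric connection on $\nu$: a $\breve G^\partial(\xi)$-orthogonal complement to the $S^1$-orbits in the circle bundle $\partial\bar B_\rho(S)$ is precisely a principal $SO(2)$-connection, and its dilation-invariant extension is the horizontal distribution of the associated metric connection on $\nu$, smooth across the zero section. Hence one may simply take $\mathscr{H}(\xi)=\mathscr{H}_0(\xi)$ (or, equivalently, interpolate between the two metric connections at the level of connection $1$-forms rather than radially), and Vilms's theorem then supplies totally geodesic fibers for all $\xi\in D^m$, so the simplified O'Neill formula is available throughout.
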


\begin{proof} 
For  $\xi \in D^m$ the scalar curvature of the metric $ \vartheta_{\sigma}$ on $\bar B_\rho $  is equal to $\frac{2}{\sigma^2}$.
Since $\exp(-s)$ is monotonically decreasing in $s$,  the O'Neill formula~\eqref{oneill} for Riemannian submersions with totally geodesic fibers implies that the scalar curvature of $\varphi(\xi , s)$ is monotonically increasing in $s$.

Furthermore, for $\tfrac{1}{2} \leq |\xi| \leq 1$, the metric
\[
\varphi(\xi , 0) = \Theta_{\sigma}(\xi) = g(\nicefrac{\xi}{|\xi|})|_{\bar B_{ \rho}(S) } 
\]
has positive scalar curvature.
This shows assertion~\ref{due}.

For $s \geq 0$, the metric $\exp(-s) \cdot \vartheta_{\sigma}$ on $\bar B_\rho$ has scalar curvature $\nicefrac{2 \exp(s)}{\sigma^2}$.
Since $\lim_{s \to \infty} \nicefrac{2 \exp(s)}{\sigma^2} = \infty$, the O'Neill formula~\eqref{oneill} implies \ref{tre}.

Next, the mean curvature of $\partial \bar B_\rho$ with respect to the metric $\vartheta_\sigma$ is given by $\cot(\nicefrac{\rho}{\sigma})/\sigma>0$.
Here we use the assumption $0 < \rho < \frac{\sigma \pi}{2}$.
Hence, the mean curvature with respect to $\exp(-s)\vartheta_\sigma$ is given by $\exp(\nicefrac{s}{2})\cot(\nicefrac{\rho}{\sigma})/\sigma$. 
Furthermore, the normal bundle of $\partial \bar B_\rho(S) $ with respect to $\Theta_\sigma(\xi)$ is vertical.
Thus,  assertion~\ref{quattro} follows from Proposition~\ref{meancurv1}.
\end{proof}

\begin{proof}[Conclusion of the proof of Theorem~\ref{mainB}]
Since the $S^1$-action on $\partial \breve M$ is free, there is an open $\Gamma$-invariant neighborhood $U$ of $\partial \breve M$ in $\breve M$ such that the $S^1$-action on $\bar U$ is free.
The set $U$ carries the structure of a smooth, non-compact $\Gamma$-manifold with boundary $\partial U = \partial \breve M$.
The metric $ \breve G(\xi)|_U$ induces a metric $\check h(\xi)$ on the orbit space $U / S^1$ such that we have a $\Gamma$-equivariant Riemannian submersion 
\begin{equation} \label{sub_inner}
       (U, \breve G(\xi)|_U)\to (U/S^1, \check h(\xi)) . 
\end{equation}

Since the $S^1$-action on $U$ restricts to an $S^1$-action on $\partial U$, the normal bundle of $\partial U \subset U$ with respect to $\breve G(\xi)|_U$ is horizontal. 
Hence Proposition~\ref{meancurv2} implies that for $\tau > 0$, the mean curvature of $\partial U \subset U$ with respect to the canonical variation $(\breve G(\xi)|_U)_{\tau}$ is independent of $\tau$.

Let $H_{\breve{G}(\xi)}$ be the mean curvature of $\partial U \subset U$ with respect to $\breve{G}(\xi)$ and set 
\[
    H :=  \max_{x\in\partial \breve M,\, \xi \in D^m} | H_{\breve{G}(\xi)}(x)| \in \R .
\]
Choose  $s_0 > 0$ large enough so that Proposition~\ref{goodstuff}~\ref{tre} and \ref{quattro} apply with this $H$.

Let $\chi \colon [0, 1] \to [0,1]$ be a smooth function equal to $1$ on $[0,\tfrac{1}{2}]$ and equal to $0$ near $1$.
Define the continuous map  $\Psi_{\bar B(S)} \colon D^m \to \RR^{\Gamma}_{>0}( \bar B_\rho(S) )$ by
\[
     \Psi_{\bar B(S)}(\xi) := \varphi(\xi, \chi(|\xi|) \cdot s_0) .
\]
Positivity of the scalar curvature of $\Psi_{\bar B(S)}(\xi)$ is implied by parts~\ref{due} and \ref{tre} of Proposition~\ref{goodstuff}.
 
Next, define $\Psi_{\breve M} \colon D^m \to \RR^{\Gamma}_{>0}(\breve M)$
\[
\Psi_{\breve M}(\xi) = \mathcal{P}(\xi)(\chi(|\xi| )\cdot s_0) ,
\]
where $\mathcal{P}$ is taken from Proposition~\ref{maindeform} with $\mathcal{P}(\xi)(0) = \breve G(\xi)$.
The families $\Psi_{\bar B(S)}$ and $\Psi_{\breve M}$ have the following properties:
\begin{enumerate}[\myicon]
\item
For every $\xi \in \partial D^m$, we have 
\begin{equation} \label{boundaryval}
     \Psi_{\bar B(S)}(\xi) = \Theta_{\sigma}(\xi) = g(\xi)|_{\bar B_\rho(S) } , \quad \Psi_{\breve M}(\xi) = \breve G(\xi) = g(\xi)|_{\breve M} . 
\end{equation}
\item
For every $\xi \in D^m$, the metrics on $\partial \bar B_\rho(S)  = \partial \breve M$ induced by $\Psi_{\bar B(S)}(\xi)$ and $\Psi_{\breve M}(\xi)$ coincide.
\item
For every $\xi \in D^m$, we get 
\begin{equation} \label{inequality}
H_{\Psi_{\bar B(S)}(\xi)} \geq - H_{\Psi_{\breve M}(\xi)}.
\end{equation}
\end{enumerate}
The first two properties are immediate. 
To show \eqref{inequality}, we distinguish the following two cases:
\begin{enumerate}[(I)]
\item 
Let $\tfrac{1}{2} \leq |\xi| \leq 1$. 
Then $\breve G(\xi) = g(\nicefrac{\xi}{|\xi|})|_{\breve M}$ by the construction of $\breve G$.
Hence, 
\[
H_{\Psi_{\bar B(S)}(\xi)} 
\geq  
H_{\Theta_{\sigma}(\xi)} 
=  
- H_{\breve G(\xi)} 
=  
- H_{\Psi_{\breve M}(\xi)}.
 \]
The inequality follows from Proposition~\ref{goodstuff}~\ref{quattro}, the first equality holds since $g(\nicefrac{\xi}{|\xi|})$ is a smooth metric on $M$, and the last equality follows from Proposition~\ref{meancurv2}.
\item 
Let $0 \leq |\xi| \leq \tfrac{1}{2}$.
Then, for each $y \in \partial \breve M = \partial \bar B_{\rho}(S)$, we get 
\begin{align*}
     H_{\Psi_{\bar B(S)}(\xi)}(y) 
     &=  
     H_{\varphi(\xi, s_0)}(y)  
     \geq  
     H
     = 
     \max_{x\in\partial \breve M} | H_{\breve{G}(\xi)}(x)|  \\
     &\geq 
     - H_{\breve G(\xi)}(y) 
     = 
     - H_{\Psi_{\breve M}(\xi)}(y).
\end{align*}
The first equality follows from  the choice of $\chi$, the first inequality follows from Proposition~\ref{goodstuff}~\ref{quattro}, and the last equality follows from Proposition~\ref{meancurv2}.
\end{enumerate}

For $\Gamma = S^1$, the situation on $\bar B_\rho(S) \cup U\subset M$ is depicted in Figure~\ref{fig.deformation2}.

\begin{figure}[ht]
\begin{overpic}[scale=0.18, trim=-20mm 0 0 0]{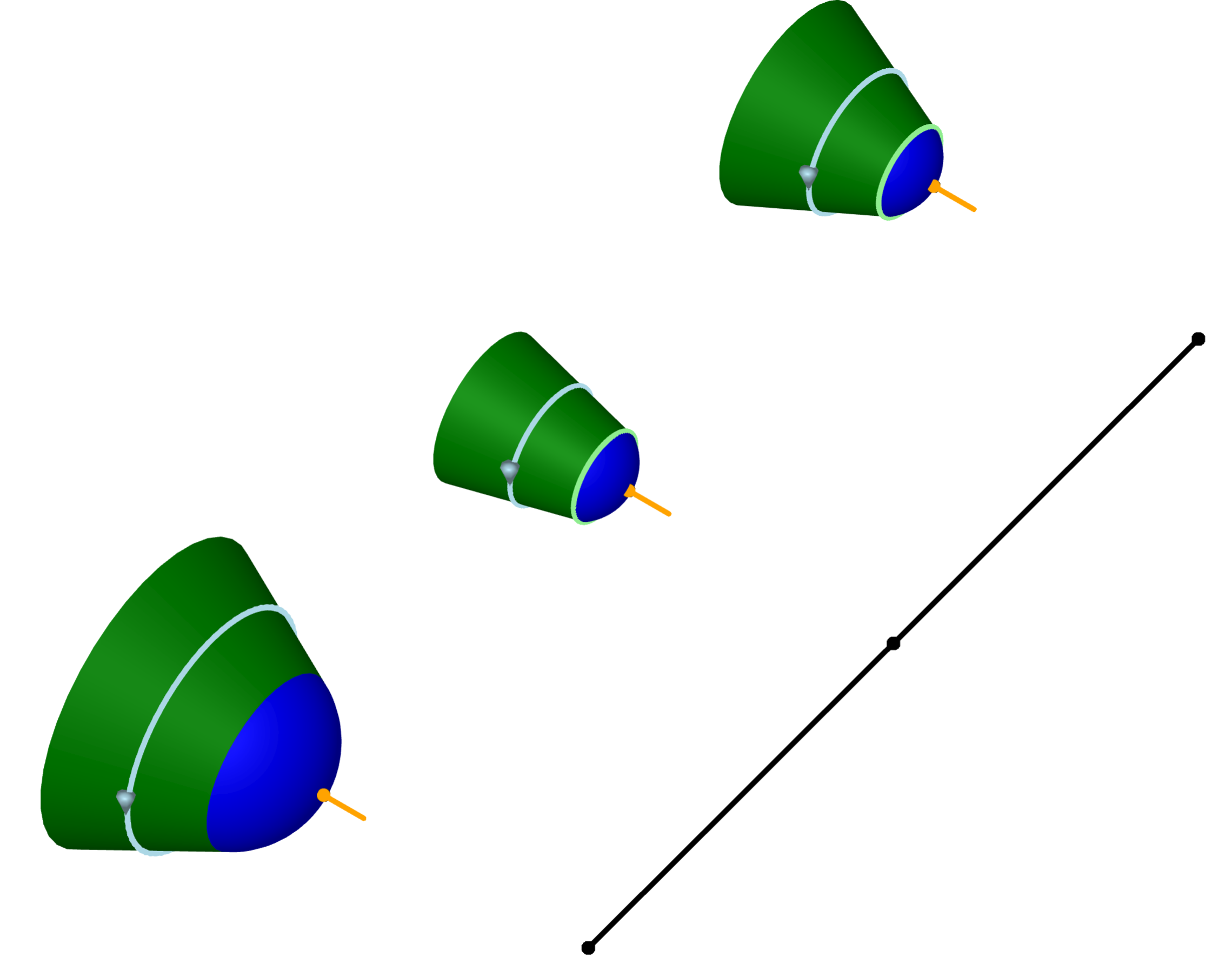}
\put(33,10){$\textcolor{gelb}{S}$}
\put(57,34){$\textcolor{gelb}{S}$}
\put(81,58){$\textcolor{gelb}{S}$}
\put(99,49){$|\xi|=0$}
\put(76,25){$|\xi|=\frac12$}
\put(53,1){$|\xi|=1$}
\end{overpic}
\caption{Illustration of the metrics $\Psi_{\bar B(S)}(\xi) \cup \Psi_{\breve M}(\xi)$ on $\bar B_\rho(S) \cup U$ for $|\xi| = 1$, $\tfrac{1}{2}$ and $0$.
Metrics in blue and green are of positive scalar curvature. 
At  $\partial \bar B_{\rho}(S) = \partial U$, the sum of the mean curvatures is non-negative for all $\xi \in D^m$ (indicated in light green).
One orbit of $\Gamma = S^1$ is shown in light blue.}
\label{fig.deformation2}
\end{figure}

We now apply the results of Section~\ref{sec.smoothing} to $\hat M := M$,  $\Sigma := \partial \bar B_{\rho}(S) \subset M$, $M_1 := \bar B_{\rho}(S) $ and  $M_2 := \breve M$.
By the previous argument, the following properties are satisfied
\begin{enumerate}[\myicon]
  \item For $\xi \in D^m$, we have $\Psi_{\bar B(S)}(\xi) \sqcup  \Psi_{\breve M}(\xi) \in \mathscr{R}^{\Gamma,\partial \bar B_{\rho}(S)}_{>0} (M)$.
  \item For $\xi \in \partial D^m$, we have $\Psi_{\bar B(S)}(\xi) \sqcup  \Psi_{\breve M}(\xi) \in \mathscr{R}^{\Gamma}_{>0} (M)$.
\end{enumerate}
Proposition~\ref{desingularise} yields a continuous map
\[ 
    \mathcal{P} \colon D^m \times [0,1] \to \mathscr{R}^{\Gamma,\partial \bar B_{\rho}(S)}_{>0}(M)
\]
such that
\begin{enumerate}[\myicon]
  \item For $\xi \in D^m$, we have $\mathcal{P}(\xi,0) = \Psi_{\bar B(S)}(\xi)  \sqcup  \Psi_{\breve M}(\xi)$,
  \item For $(\xi,s) \in  \big( \partial D^m \times [0,1]\big) \cup \big( D^m \times \{1\} \big)$, we have $\mathcal{P}(\xi,s) \in \mathscr{R}^{\Gamma}_{>0} (M)$.
\end{enumerate}
Now define the continuous map $G \colon D^m \to \mathscr{R}_{>0}^{\Gamma}(M)$ by
\[
G(\xi) = 
\begin{cases} 
\mathcal{P} ( 2 \xi, 1),  & \textrm{ for } 0 \leq |\xi| \leq \frac{1}{2} , \\
\mathcal{P}\left( \frac{\xi}{|\xi|} , 2 - 2 |\xi| \right), & \textrm{ for } \frac{1}{2} \leq |\xi| \leq 1 .
\end{cases}
\]
Using \eqref{boundaryval}, we have $G(\xi) = g(\xi)$ for $\xi \in \partial D^m$. 
This completes  the proof of Theorem~\ref{mainB}.
\end{proof}

\appendix
 
\section{Equivariant local flexibility for closed smooth submanifolds}
\label{sec:EqFlexLemma}

Theorem~1.2 in \cite{BaerHanke1} is a rather general extension result for locally defined homotopies of solutions to open partial differential relations.
In the literature, it is known as the ``flexibility lemma'' (see \cite{GromovPartial}*{p.~111}) and as the ``cut-off homotopy lemma'' (see \cite{Gromov2018}*{Section~11.1}).
In the present article, we need a $\Gamma$-invariant version which we provide in this appendix.
Function spaces of smooth maps will be equipped with  the weak $C^{\infty}$-topology, see Chapter~2 in \cite{Hirsch94}.

 \begin{setting}
 \label{set:setting}

 We denote by
 \begin{enumerate}[\myicon]
 \item 
 $\Gamma$ a compact Lie group and $M$ a smooth $\Gamma$-manifold;
 \item
 $S \subset M$ a compact smooth $\Gamma$-invariant submanifold without boundary;
 \item
 $U$ an open $\Gamma$-invariant neighborhood of $S$ in $M$;
 \item
 $E \to M$ a smooth $\Gamma$-vector bundle;
 \item
 $k\ge 0$ an integer;
  \item 
  $K$ a compact Hausdorff space; 
  \item 
 $\RR\subset  J^k E$ an open subset, where $J^k E \to E \to M$ is the $k$-th jet bundle; 
  \item
 $f_0 \colon  K \to C^{\infty}(M,E)$ a continuous family of smooth $\Gamma$-invariant sections on $M$ such that each $f_0(\xi)$ solves  $\RR$ over $M$;
 \item
 $F \colon   K \to C^{\infty}([0,1],  C^{\infty} (U,E))$ a  continuous family of smooth paths of $\Gamma$-equivariant sections on $U$ with the property that each $F(\xi)(t)$ solves $\RR$ over $U$ and  $F(\xi)(0) = f_0(\xi)|_U$. 
 \end{enumerate}
 \end{setting}
 
\begin{theorem} \label{thm:FlexLem1}
Assume that in Setting~\ref{set:setting}, we have 
\[
       j^{k-1}F(\xi)(t)|_{S}  = j^{k-1}f_0(\xi)|_{S} \, \textrm{  for all } \, \xi \in K \text{ and } t \in [0,1] . 
\]
(For $k=0$, this assumption is empty.)
Then there exists a $\Gamma$-invariant open neighborhood $U_0\subset U$ of $S$ and a continuous map 
 \[
     f\colon  K \to C^{\infty} ( [0,1], C^{\infty}(M,E))
 \]
 such that for all $\xi \in K$ and $t \in [0,1]$ we have:
 \begin{enumerate}[\myicon] 
 \item 
 $f(\xi)(t)$ is a $\Gamma$-equivariant section of $E$, solving $\RR$;
 \item
 $f(\xi)(0)=f_0(\xi)$;
 \item
 $f(\xi)(t)|_{U_0}=F(\xi)(t)|_{U_0}$;
 \item
 $f(\xi)(t)|_{M \setminus U}=f_0(\xi) |_{M \setminus U}$.
\end{enumerate} 
Moreover, if $F(\xi)$ is constant in $t$, then  $f(\xi)$ is constant in $t$. 
\end{theorem}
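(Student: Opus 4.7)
The plan is to follow the proof of the non-equivariant flexibility lemma \cite{BaerHanke1}*{Theorem~1.2} while making every auxiliary choice $\Gamma$-invariant so that $\Gamma$-equivariance of the output is built in. First I would fix a $\Gamma$-invariant Riemannian metric on $M$ (obtained by averaging) and a $\Gamma$-invariant bundle metric and connection on $E$. The resulting normal exponential map identifies a $\Gamma$-invariant open tubular neighborhood of $S$ equivariantly with a neighborhood of the zero section of the normal bundle $\nu_S\to S$, and via this identification the fiberwise dilation $\eta\mapsto\lambda\eta$ is $\Gamma$-equivariant. Averaging a non-invariant cut-off over $\Gamma$ then produces, for each sufficiently small $\varepsilon>0$, a $\Gamma$-invariant smooth function $\chi_\varepsilon\colon M\to[0,1]$ which equals $1$ on a $\Gamma$-invariant $\varepsilon/2$-neighborhood $U_{\varepsilon/2}$ of $S$ and is supported in a $\Gamma$-invariant $\varepsilon$-neighborhood $U_\varepsilon\subset U$ of $S$.

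With these choices, I would define the candidate homotopy by
\[
f(\xi)(t) := f_0(\xi) + \chi_\varepsilon\cdot\bigl(F(\xi)(t)-f_0(\xi)|_U\bigr),
\]
extended by $f_0(\xi)$ outside $U_\varepsilon$. Because $f_0(\xi)$ and $F(\xi)(t)$ are $\Gamma$-equivariant and $\chi_\varepsilon$ is $\Gamma$-invariant, each $f(\xi)(t)$ is automatically $\Gamma$-equivariant; continuous dependence on $\xi\in K$ and smooth dependence on $t\in[0,1]$ are inherited from $f_0$ and $F$. The boundary conditions $f(\xi)(0)=f_0(\xi)$, $f(\xi)(t)|_{U_{\varepsilon/2}}=F(\xi)(t)|_{U_{\varepsilon/2}}$, $f(\xi)(t)|_{M\setminus U_\varepsilon}=f_0(\xi)|_{M\setminus U_\varepsilon}$, and constancy in $t$ whenever $F(\xi)$ is constant in $t$ are then immediate, so taking $U_0:=U_{\varepsilon/2}$ will satisfy the structural conclusions.

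The main obstacle is to verify that $j^k f(\xi)(t)\in\mathscr{R}$ uniformly in $(\xi,t)\in K\times[0,1]$ for sufficiently small $\varepsilon$; this is the entire content of the non-equivariant flexibility lemma and is the only place where the $(k-1)$-jet hypothesis is used. Writing $h(\xi)(t):=F(\xi)(t)-f_0(\xi)|_U$, the assumption $j^{k-1}F(\xi)(t)|_S=j^{k-1}f_0(\xi)|_S$ means that $h(\xi)(t)$ vanishes to order $k-1$ along $S$, so Taylor's theorem in the tubular coordinates yields $|\nabla^j h(\xi)(t)|(x)\le C\cdot d(x,S)^{k-j}$ for $0\le j\le k$, with $C$ uniform in $(\xi,t)$ by compactness of $K\times[0,1]$. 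Combined with the standard scaling bound $|\nabla^j\chi_\varepsilon|\le C\varepsilon^{-j}$ and the Leibniz rule, and using the reparametrization $\eta\mapsto\varepsilon\eta$ of $\nu_S$ to trade factors of $d(x,S)$ for factors of $\varepsilon$, one obtains that $j^k(\chi_\varepsilon h(\xi)(t))$ tends to $0$ in $C^0$ as $\varepsilon\to 0$, uniformly in $(\xi,t)$. Since $\mathscr{R}$ is open and both $j^k f_0(\xi)(M)$ and $j^k F(\xi)(t)(U)$ are contained in $\mathscr{R}$, openness and compactness of $K\times[0,1]$ give $j^k f(\xi)(t)\in\mathscr{R}$ for all sufficiently small $\varepsilon$, finishing the proof. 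In effect, the argument of \cite{BaerHanke1} goes through verbatim once one verifies that the cut-off can be chosen $\Gamma$-invariant.
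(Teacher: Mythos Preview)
Your approach has a genuine gap: the central claim that $j^k(\chi_\varepsilon h(\xi)(t))\to 0$ in $C^0$ as $\varepsilon\to 0$ is false in general. Expand $\nabla^k(\chi_\varepsilon h)=\sum_{j=0}^k\binom{k}{j}\nabla^j\chi_\varepsilon\otimes\nabla^{k-j}h$. Your Taylor estimate $|\nabla^{k-j}h|\le C\,d(\cdot,S)^{j}$ together with $|\nabla^j\chi_\varepsilon|\le C\varepsilon^{-j}$ only shows that each summand is $O(1)$ on the transition annulus, not $o(1)$; in particular the $j=0$ term $\chi_\varepsilon\cdot\nabla^k h$ converges to $\chi_\varepsilon(p)\cdot\nabla^k h|_S$, which is typically nonzero. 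Consequently, at a point $p$ in the annulus with $\chi_\varepsilon(p)=c\in(0,1)$, the $k$-jet of your $f$ is, up to terms that do vanish, the convex combination $(1-c)\,j^k f_0(p)+c\,j^k F(\xi)(t)(p)$. Since $\RR$ is merely open, not convex, there is no reason this lies in $\RR$. A one-dimensional example already exhibits the failure: take $M=\R$, $S=\{0\}$, $k=1$, $f_0=0$, $F(t)(x)=tx$, and $\RR=\{(a,b):b>-\tfrac{1}{10}\}$; then $f_0$ and every $F(t)$ solve $\RR$, but your $f(1)(x)=\chi_\varepsilon(x)\,x$ has $f(1)'(x)=\chi_\varepsilon(x)+x\,\chi_\varepsilon'(x)$, which is negative (indeed $<-\tfrac{1}{10}$) somewhere on the annulus for any rescaled bump $\chi_\varepsilon$.

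This is not the construction in \cite{BaerHanke1}, and the paper does not use a spatial cut-off either. The actual formula is the \emph{time} reparametrization
\[
f(\xi)(t)(p)=F(\xi)\bigl(t\,\tau(p)\bigr)(p),
\]
with $\tau$ a suitable cut-off of the distance to $S$. The point is that the leading part of $j^k f(\xi)(t)(p)$ is then $j^k\bigl[F(\xi)(t\tau(p))\bigr](p)$, which lies in $\RR$ for \emph{every} value of $t\tau(p)\in[0,1]$ by hypothesis; the correction terms all contain at least one factor $\partial_t^{m}F$ with $m\ge 1$, and these have vanishing $(k-1)$-jet along $S$ (differentiate the hypothesis in $t$), so they can be made uniformly small by choosing $\tau$ appropriately. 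Once one quotes \cite{BaerHanke1}*{Theorem~1.2} as a black box, the only additional observation needed here is that $\tau$ can be taken $\Gamma$-invariant by writing $\tau=\tilde\tau(\mathrm{dist}_g(\cdot,S))$ for a $\Gamma$-invariant metric $g$, exactly as the paper does. Your instinct to make the auxiliary data $\Gamma$-invariant is right; the error is in replacing the time-reparametrization by a linear spatial interpolation.
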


\begin{proof}
Theorem~1.2 in \cite{BaerHanke1} ensures the existence of $U_0$ and $f$, except that no statement about $\Gamma$-invariance is made.
Replacing $U_0$ by a smaller $\Gamma$-invariant neighborhood of $S$ in $U$ if necessary, we can assume that $U_0$ is $\Gamma$-invariant.

The homotopy $f$ is constructed in \cite{BaerHanke1} by the prescription
\begin{equation*} 
  f(\xi)(t)(p) 
  := 
\begin{cases}
F(\xi)(t\tau(p))(p), & \text{if }p \in U_0' ,\\
f_0(\xi)(p), & \text{if }p \in M\setminus U_0',
\end{cases}
\end{equation*}
where $U_0'$ is a suitable open subset with $U_0\subset U_0'\subset U$.
We can choose $U_0'$ to be $\Gamma$-invariant.
Moreover, since $S$ is a compact smooth submanifold,\footnote{In \cite{BaerHanke1}, $S$ is only assumed to be a closed subset of $M$. In that case, $\tau$ takes a more complicated form.} the function $\tau\colon M\to[0,1]$ can be chosen to be of the form $\tau(p)=\tilde{\tau}(\mathrm{dist}(p,S))$ for a suitable smooth function $\tilde{\tau}\colon\R\to[0,1]$ as in Lemma~2.8 of \cite{BaerHanke1}.
Note that $\tilde{\tau}$ is equal to $1$ near $0$ so that $\tau$ is smooth near $S$.
Here the distance is measured with respect to an auxiliary $\Gamma$-invariant Riemannian metric on $M$.
Then $\tau$ is $\Gamma$-invariant.

Therefore each $f(\xi)(t)$ is $\Gamma$-equivariant because
\begin{align*}
\gamma \big(f(\xi)(t)(\gamma^{-1}p)\big)
&=
\gamma (F(\xi)(t\tau(\gamma^{-1}p))(\gamma^{-1}p)) 
=
\gamma (F(\xi)(t\tau(p))(\gamma^{-1}p)) \\
&=
F(\xi)(t\tau(p))(p)
=
f(\xi)(t)(p) .
\qedhere
\end{align*}
\end{proof}

Simple examples show that the assumption $j^{k-1}F(\xi)(t)=j^{k-1}f_0(\xi)$ along $S$ cannot be dropped (see \cite{BaerHanke1}*{Remark~3.2}).

\section{\texorpdfstring{The $1$-jet of a metric along a submanifold}{The 1-jet of a metric along a submanifold}}
\label{sec.jet1}

Let $M$ be a smooth manifold and $S \subset M$ a smooth submanifold.
We provide a sufficient criterion which ensures that the $1$-jets of two metrics on $M$ agree along $S$.

\begin{lemma}\label{lem.jet1g}
Let $g$ and $\tilde{g}$ be two Riemannian metrics on $M$.
Assume that 
\begin{itemize}[\myicon] 
\item 
$g$ and $\tilde{g}$ induce the same metric on $S$.
\item 
The normal exponential maps $\exp^{\perp}_g$ and $\exp^{\perp}_{\tilde{g}}$ along $S$ coincide on a neighborhood of the zero-section in the normal bundle $(TM|_S)/TS\to S$.
\item 
The bundle metrics on $(TM|_S)/TS \to S$ induced by $g$ and $\tilde g$ coincide.
\item 
The second fundamental forms of $S$ in $M$ with respect to $g$ and $\tilde{g}$ coincide.
\item
The connections on $\nu$ induced by $g$ and $\tilde{g}$ coincide.
\end{itemize}
Then $j^1 g = j^1 \tilde{g}$ along $S$.
\end{lemma}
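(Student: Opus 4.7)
The plan is to work in Fermi coordinates along $S$. Condition~2 ensures that the normal exponential map provides a common coordinate chart for both metrics, so I would pick a local trivialization $\nu_1,\ldots,\nu_{n-k}$ of $\nu$ over a chart $(x^1,\ldots,x^k)$ of $S$ and use $(x,y)\mapsto\exp^\perp(\sum_\alpha y^\alpha\nu_\alpha(x))$ to define Fermi coordinates $(x^i,y^\alpha)$ on a neighborhood of $S$ in $M$. In these coordinates write $g_{IJ}(x,y)$ and $\tilde g_{IJ}(x,y)$, using indices $i,j,\ldots$ for tangential directions and $\alpha,\beta,\ldots$ for fiber directions; the task then reduces to showing $g_{IJ}(x,0)=\tilde g_{IJ}(x,0)$ and $\partial_L g_{IJ}(x,0)=\partial_L \tilde g_{IJ}(x,0)$ for all $I,J,L$.

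Agreement of the $0$-jet along $S$ follows from conditions~1 and~2: condition~1 handles the tangential block $g_{ij}(x,0)=\tilde g_{ij}(x,0)$; the generalized Gauss lemma, which holds in any Fermi chart, forces $g_{i\alpha}(x,0)=\tilde g_{i\alpha}(x,0)=0$; and differentiating condition~2 at the zero section identifies the embeddings $\nu\hookrightarrow TM|_S$ provided by $\nu^g$ and $\nu^{\tilde g}$ together with the fiber metrics they induce on $\nu$, giving $g_{\alpha\beta}(x,0)=\tilde g_{\alpha\beta}(x,0)$.

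For the $1$-jet, I would show that all Christoffel symbols of the Levi-Civita connection along $S$ agree in Fermi coordinates; then the identity $\partial_L g_{IJ}|_S = g_{KJ}\,\Gamma^K_{LI}|_S + g_{IK}\,\Gamma^K_{LJ}|_S$ (equivalent to $\nabla g=0$) forces agreement of the first derivatives. The central observation is that the radial lines $t\mapsto(x_0,ty_0)$ are geodesics for both metrics (a consequence of the Fermi-coordinate definition), so the geodesic equation gives $\Gamma^I_{\alpha\beta}(x_0,0)\,y_0^\alpha y_0^\beta = 0$ for every $y_0$ and every ambient index $I$; by polarization, $\Gamma^I_{\alpha\beta}|_S=0$, and the analogous equation holds for $\tilde\Gamma$. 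The remaining Christoffel symbols are exactly the invariants provided by the hypotheses: $\Gamma^k_{ij}|_S$ is the Levi-Civita of $(S,g|_S)$ (condition~1); $\Gamma^\alpha_{ij}|_S$ records $\II(\partial_i,\partial_j)$ in the frame $\{\partial_\alpha\}$ (condition~3 together with the fiber metric); $\Gamma^\beta_{i\alpha}|_S$ gives the normal connection (condition~4); and the tangential part $\Gamma^j_{i\alpha}|_S$ is determined by the Weingarten map, hence by $\II$, the induced metric and the fiber metric (conditions~1 and~3). All of these agree for $g$ and $\tilde g$.

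The main obstacle is recognizing the Fermi-coordinate collapse $\Gamma^I_{\alpha\beta}|_S=0$, which is precisely what reduces the still-to-be-determined part of the $1$-jet to the coordinate-invariant combination given by the four hypotheses. Once this is in hand, the conclusion is a matter of bookkeeping: assembling the $\partial_L g_{IJ}|_S$ from matching Christoffels and the matching $0$-jet gives $j^1 g|_S = j^1\tilde g|_S$.
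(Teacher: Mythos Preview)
Your proposal is correct and follows essentially the same route as the paper's proof: both arguments introduce Fermi-type coordinates via the common normal exponential map, verify the $0$-jet from conditions~1 and~2, obtain $\Gamma^I_{\alpha\beta}|_S=0$ from the geodesic equation for radial lines, and identify the remaining Christoffel symbols with the second fundamental form, the Weingarten map, and the normal connection. The only cosmetic difference is that the paper chooses Riemannian normal coordinates on $S$ at the point under consideration (so that $\Gamma^k_{ij}(p)=0$ automatically), whereas you keep arbitrary coordinates on $S$ and instead observe that $\Gamma^k_{ij}|_S$ is the Levi-Civita connection of the common induced metric; both choices lead to the same conclusion.
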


\begin{proof}
Any Riemannian metric on $M$ induces an identification of the abstract normal bundle $\nu=(TM|_S)/TS\to S$ with a subbundle of $TM|_S$, namely with the orthogonal complement of $TS$.
Since the normal exponential maps are assumed to coincide, $g$ and $\tilde{g}$ induce the same identification.
Thus the first three conditions imply that $g$ and $\tilde{g}$ coincide on $TM|_S$.
In other words, the $0$-jets of $g$ and $\tilde{g}$ agree along $S$.

To check the condition $j^1 g = j^1 \tilde{g}$ at a point $p\in S$, we choose Riemannian normal coordinates $(x^1, \ldots, x^m)$ of $S$ around $p$ where $m=\dim(S)$.
Let $e_{m+1},\dots,e_n$ be a local orthonormal frame of the normal bundle of $S$ in $M$ near $p$.
We extend the coordinates $(x^1, \ldots, x^m)$ to a local coordinate system $(x^1, \ldots, x^n)$ of $M$ near $p$ by $x^{k}\big(\exp_{(x^1,\ldots,x^m)}(\sum_{i=m+1}^n t^i e_i)\big) = t^k$ for $k=m+1,\dots,n$.
By the first and second assumption in the lemma, the coordinates are the same for both metrics $g$ and $\tilde{g}$.

In what follows, denote indices in $\{1,\dots,m\}$ by Roman letters and the ones in $\{m+1,\dots,n\}$ by Greek letters.
We investigate the Christoffel symbols of the Levi-Civita connection of $g$ in these coordinates.
The $\Gamma_{ij}^k$ are the Christoffel symbols of the metric induced on $S$ but they are also Christoffel symbols of $M$.
Since we started with normal coordinates on $S$, we have $\Gamma_{ij}^k(p)=0$ for $i,j,k\in\{1,\dots,m\}$.

For $(v_{m+1},\dots,v_n)\in\R^{n-m}$, the curve $c(t) = \exp_p(\sum_\alpha t v_\alpha e_\alpha)$ is a geodesic in $M$. 
Therefore, $0=\ddot{c}^\bullet + \sum_{\alpha\beta} \Gamma_{\alpha\beta}^\bullet \dot{c}^\alpha \dot{c}^\beta = 0 + \sum_{\alpha\beta} \Gamma_{\alpha\beta}^\bullet v_\alpha v_\beta$ where $\bullet\in\{1,\dots,n\}$.
Thus $\Gamma_{\alpha\beta}^k(p)=\Gamma_{\alpha\beta}^\gamma(p)=0$.

Moreover, $\Gamma_{ij}^\alpha(p) = g(\II_p(\partial_i,\partial_j),\partial_\alpha)$ where $\II$ denotes the second fundamental form.
Finally, $\Gamma_{i\alpha}^j(p) = - \Gamma_{ij}^\alpha(p) = - g(\II_p(\partial_i,\partial_j),\partial_\alpha)$.

Finally, the $\Gamma_{k\alpha}^\beta(p) = \Gamma_{\alpha k}^\beta(p)$ are the Christoffel symbols of the normal connection on~$\nu$.

The same reasoning applies to the Christoffel symbols for the metric $\tilde{g}$ with respect to the same coordinates $(x^1,\dots,x^n)$.
Thus, the Christoffel symbols of $g$ and $\tilde{g}$ coincide at $p$ and hence so do the first derivatives of the two metrics.
This proves $j^1g(p)=j^1\tilde{g}(p)$.
\end{proof}

\section{An auxiliary lemma}
\label{sec.aux}

\begin{lemma} \label{lem.tub.nbhd}
Let $\Gamma$ be a compact Lie group and $M$ a connected smooth $\Gamma$-manifold without boundary.
Let $K$ be a compact Hausdorff space and $\bar G \colon K \to \mathscr{R}^{\Gamma}(M)$ a family of $\Gamma$-invariant (not necessarily complete) Riemannian metrics on $M$, continuous with respect to the weak $C^\infty$-topology.
Let $\mathscr{W} \subset M$ be an embedded $\Gamma$-invariant submanifold of codimension $\ge1$.
Let $r_\xi \colon M \to [0,\infty)$ be the distance function to $\mathscr{W}$ with respect to the metric $\bar G(\xi)$.

Then there exists a $\Gamma$-invariant open neighborhood $V$ of $\mathscr{W}$ in $M$ such that $r_\xi^2$ is smooth on $V$ for every $\xi \in K$.
\end{lemma}

\begin{proof}
For $\xi \in K$, denote the geometric normal bundle of $\mathscr{W}$ in $M$ by $\pi\colon\nu_\xi \to \mathscr{W}$.
For $r>0$ let $B_{r}(\nu_\xi) := \{ v \in \nu_\xi \mid |v| < r\} \to \mathscr{W}$ be the open disk bundle of radius $r$.
Let $d_\xi$ be the distance function on $M$ with respect to $\bar G(\xi)$ and let $B_{\xi,r}(x) = \{y\in M \mid d_\xi(x,y)<r\}$ denote the corresponding open metric ball of radius $r$.

\medskip\noindent\textit{Step~1: local smoothness.}
For each $x\in\mathscr{W}$ and $\xi\in K$, by the tubular neighborhood theorem there exists $r_{x,\xi}>0$ such that the normal exponential map 
\[
\exp^{\perp}_{\bar G(\xi)} \colon B_{r_{x,\xi}}(\nu_\xi)\big|_{B_{\xi,r_{x,\xi}}(x)\cap\mathscr{W}} \to M
\] 
is a diffeomorphism onto its image and the normal geodesics are the unique (up to reparametrization) shortest geodesics from points in that image to $\mathscr{W}$.
Hence
\begin{equation}
r_\xi\!\left(\exp^{\perp}_{\bar G(\xi)}(v)\right) = |v| \quad \textrm{ for all } v \in B_{r_{x,\xi}}(\nu_\xi)\big|_{B_{\xi,r_{x,\xi}}(x)\cap\mathscr{W}} .
\label{eq.tub.nbhd.dist}
\end{equation}
Now let $p = \exp^{\perp}_{\bar G(\xi)}(v)$ with $v \in B_{\frac{r_{x,\xi}}{3}}(\nu_\xi)\big|_{B_{\xi,\frac{r_{x,\xi}}{3}}(x)\cap\mathscr{W}}$.
We show that there is no point $y\in\mathscr{W}$ closer to $p$ than $\pi(v)$.
Indeed, if $d_\xi(p,y) \le d_\xi(p,\pi(v)) < \frac{r_{x,\xi}}{3}$, then the triangle inequality gives
\[
d_\xi(y,x) \le d_\xi(y,p) + d_\xi(p,\pi(v)) + d_\xi(\pi(v),x) < r_{x,\xi},
\]
so $y \in B_{\xi,r_{x,\xi}}(x)\cap\mathscr{W}$.
Since $\exp^{\perp}_{\bar G(\xi)}$ maps $B_{r_{x,\xi}}(\nu_\xi)\big|_{B_{\xi,r_{x,\xi}}(x)\cap\mathscr{W}}$ diffeomorphically onto its image, we conclude $y=\pi(v)$.
By \eqref{eq.tub.nbhd.dist} it follows that $r_\xi(p) = d_\xi(p,\pi(v)) = |v|$, hence 
\[
r_\xi^2 = |\cdot|^2 \circ (\exp^{\perp}_{\bar G(\xi)})^{-1}
\] 
is smooth on the image $U_{x,\xi} := \exp^{\perp}_{\bar G(\xi)}\!\big(B_{\frac{r_{x,\xi}}{3}}(\nu_\xi)\big|_{B_{\xi,\frac{r_{x,\xi}}{3}}(x)\cap\mathscr{W}}\big)$.

\medskip\noindent\textit{Step~2: uniformity in $\xi$.}
By smooth dependence of the exponential map on the metric and compactness of $K$, the radius $r_{x,\xi}$ can be chosen to depend continuously on $\xi$.
Hence $\rho_x := \min_{\xi \in K} r_{x,\xi}$ is positive.
Fix an auxiliary Riemannian metric $g_0$ on $M$.
Since each $U_{x,\xi}$ is an open neighborhood of $x$ varying continuously with $\xi \in K$, compactness of $K$ yields $\delta_x > 0$ such that the $g_0$-ball $B^{g_0}_{\delta_x}(x)$ is contained in $U_{x,\xi}$ for every $\xi \in K$.
On this ball, $r_\xi^2$ is smooth for every $\xi \in K$.

\medskip\noindent\textit{Step~3: the global neighborhood.}
Set $V_0 := \bigcup_{x \in \mathscr{W}} B^{g_0}_{\delta_x}(x)$, an open neighborhood of $\mathscr{W}$ on which $r_\xi^2$ is smooth for every $\xi \in K$.
Since $\Gamma$ is compact and acts by isometries of each $\bar G(\xi)$ (hence preserves $r_\xi^2$), the $\Gamma$-saturation $V := \Gamma \cdot V_0$ is a $\Gamma$-invariant open neighborhood of $\mathscr{W}$ on which $r_\xi^2$ is smooth for every $\xi \in K$.
\end{proof}

\begin{bibdiv}
\begin{biblist}

\bib{BamKlein}{arxiv}{
      title={Ricci flow and contractibility of spaces of metrics}, 
      author={Bamler, Richard H.},
      author={Kleiner, Bruce},
      year={2019},
      url={https://arxiv.org/abs/1909.08710}, 
}

\bib{BaerHanke1}{article}{
    AUTHOR = {B\"ar, Christian},
    AUTHOR = {Hanke, Bernhard},
     TITLE = {Local flexibility for open partial differential relations},
   JOURNAL = {Commun. Pure Appl. Math.},
    VOLUME = {75},
      YEAR = {2022},
    NUMBER = {6},
     PAGES = {1377--1415},
      ISSN = {0010-3640,1097-0312},
       DOI = {10.1002/cpa.21982},
       URL = {https://doi.org/10.1002/cpa.21982},
}

\bib{BaerHanke2}{incollection}{
    AUTHOR = {B\"ar, Christian},
    author = {Hanke, Bernhard},
     TITLE = {Boundary conditions for scalar curvature},
     BOOKTITLE = {Perspectives in scalar curvature. {V}ol. 2},
     PAGES = {325--377},
  PUBLISHER = {World Sci. Publ., Hackensack, NJ},
      YEAR = {2023},
}

\bib{BB}{incollection}{
   author={B\'erard Bergery, Lionel},
   title={Scalar curvature and isometry group},
   booktitle={Spectra of Riemannian manifolds},
   volume={12},
   publisher={Kaigai Publishers},
   date={1983},
   pages={9--28},
}

\bib{Besse}{book}{
    AUTHOR = {Besse, Arthur L.},
     TITLE = {Einstein manifolds},
    SERIES = {Classics in Mathematics},
      NOTE = {Reprint of the 1987 edition},
 PUBLISHER = {Springer-Verlag, Berlin},
      YEAR = {2008},
     PAGES = {xii+516},
      ISBN = {978-3-540-74120-6},

}

\bib{Bierstone}{article}{
    AUTHOR = {Bierstone, Edward},
     TITLE = {Equivariant {G}romov theory},
   JOURNAL = {Topology},

    VOLUME = {13},
      YEAR = {1974},
     PAGES = {327--345},
      ISSN = {0040-9383},
    DOI = {10.1016/0040-9383(74)90024-X},
       URL = {https://doi.org/10.1016/0040-9383(74)90024-X},
}

\bib{BERW}{article}{
    AUTHOR = {Botvinnik, Boris},
    author = {Ebert, Johannes}, 
    author = {Randal-Williams, Oscar},
     TITLE = {Infinite loop spaces and positive scalar curvature},
   JOURNAL = {Invent. Math.},
    VOLUME = {209},
      YEAR = {2017},
    NUMBER = {3},
     PAGES = {749--835},
       DOI = {10.1007/s00222-017-0719-3},
       URL = {https://doi.org/10.1007/s00222-017-0719-3},
}

\bib{Bredon}{book}{
    AUTHOR = {Bredon, Glen E.},
     TITLE = {Introduction to compact transformation groups},
    SERIES = {Pure and Applied Mathematics},
    VOLUME = {Vol. 46},
 PUBLISHER = {Academic Press, New York-London},
      YEAR = {1972},
     PAGES = {xiii+459},
}

\bib{CE}{book}{
    AUTHOR = {Cieliebak, Kai}, 
    author = {Eliashberg, Yakov},
     TITLE = {From {S}tein to {W}einstein and back},
    SERIES = {American Mathematical Society Colloquium Publications},
    VOLUME = {59},
      NOTE = {Symplectic geometry of affine complex manifolds},
 PUBLISHER = {American Mathematical Society, Providence, RI},
      YEAR = {2012},
     PAGES = {xii+364},
       DOI = {10.1090/coll/059},
       URL = {https://doi.org/10.1090/coll/059},
}

\bib{ERW}{article}{
    AUTHOR = {Ebert, Johannes},
    author = {Randal-Williams, Oscar},
     TITLE = {The positive scalar curvature cobordism category},
   JOURNAL = {Duke Math. J.},
    VOLUME = {171},
      YEAR = {2022},
    NUMBER = {11},
     PAGES = {2275--2406},
      ISSN = {0012-7094,1547-7398},
       DOI = {10.1215/00127094-2022-0023},
       URL = {https://doi.org/10.1215/00127094-2022-0023},
}

\bib{GromovPartial}{book}{
    AUTHOR = {Gromov, Mikhael},
     TITLE = {Partial differential relations},
    SERIES = {Ergebnisse der Mathematik und ihrer Grenzgebiete (3) [Results
              in Mathematics and Related Areas (3)]},
    VOLUME = {9},
 PUBLISHER = {Springer-Verlag, Berlin},
      YEAR = {1986},
     PAGES = {x+363},
       DOI = {10.1007/978-3-662-02267-2},
       URL = {https://doi.org/10.1007/978-3-662-02267-2},
}

\bib{Gromov2018}{article}{
 author={Gromov, Mikhael},
 issn={1016-443X},
 issn={1420-8970},
 doi={10.1007/s00039-018-0453-z},
 title={Metric inequalities with scalar curvature},
 journal={Geometric and Functional Analysis},
 volume={28},
 number={3},
 pages={645--726},
 date={2018},
 publisher={Springer (Birkh{\"a}user), Basel},
}

\bib{Hanke08}{article}{
    AUTHOR = {Hanke, Bernhard},
     TITLE = {Positive scalar curvature with symmetry},
   JOURNAL = {J. Reine Angew. Math.},
    VOLUME = {614},
      YEAR = {2008},
     PAGES = {73--115},
       DOI = {10.1515/CRELLE.2008.003},
       URL = {https://doi.org/10.1515/CRELLE.2008.003},
}

\bib{HSS14}{article}{
    AUTHOR = {Hanke, Bernhard},
    author = {Schick, Thomas},
    author = {Steimle, Wolfgang},
     TITLE = {The space of metrics of positive scalar curvature},
   JOURNAL = {Publ. Math. Inst. Hautes \'Etudes Sci.},
    VOLUME = {120},
      YEAR = {2014},
     PAGES = {335--367},
       DOI = {10.1007/s10240-014-0062-9},
       URL = {https://doi.org/10.1007/s10240-014-0062-9},
}

\bib{Hirsch94}{book}{ 
    AUTHOR = {Hirsch, Morris W.},
     TITLE = {Differential topology},
    SERIES = {Graduate Texts in Mathematics},
    VOLUME = {33},
      NOTE = {Corrected reprint of the 1976 original},
 PUBLISHER = {Springer-Verlag, New York},
      YEAR = {1994},
     PAGES = {x+222},
      ISBN = {0-387-90148-5},
}

\bib{Kan07}{article}{
 author = {Kankaanrinta, Marja},
 title = {Equivariant collaring, tubular neighbourhood and gluing theorems for proper {Lie} group actions},
 journal = {Algebr. Geom. Topol.},
 issn = {1472-2747},
 volume = {7},
 pages = {1--27},
 year = {2007},
 language = {English},
 doi = {10.2140/agt.2007.7.1},
}

\bib{LP}{article}{
 author={Lee, John M.},
 author={Parker, Thomas H.},
 issn={0273-0979},
 issn={1088-9485},
 doi={10.1090/S0273-0979-1987-15514-5},
 title={The Yamabe problem},
 journal={Bulletin of the American Mathematical Society. New Series},
 volume={17},
 pages={37--91},
 date={1987},
 publisher={American Mathematical Society (AMS), Providence, RI},
}

\bib{Lott99}{article}{
    AUTHOR = {Lott, John},
     TITLE = {Signatures and higher signatures of {$S^1$}-quotients},
   JOURNAL = {Math. Ann.},
    VOLUME = {316},
      YEAR = {2000},
    NUMBER = {4},
     PAGES = {617--657},
       DOI = {10.1007/s002080050347},
       URL = {https://doi.org/10.1007/s002080050347},
}

\bib{LY74}{article}{
  author={Lawson, H. Blaine}, 
  author={Yau, Shing-Tung},
  title={Scalar curvature, non-abelian group actions and the degree of symmetry of exotic spheres},
  journal={Comment. Math. Helv.},
  number={49},
  year={1974},
  pages={232--244}
}

 \bib{Marques}{article}{
    AUTHOR = {Marques, Fernando Cod\'a},
     TITLE = {Deforming three-manifolds with positive scalar curvature},
   JOURNAL = {Ann. of Math. (2)},
    VOLUME = {176},
      YEAR = {2012},
    NUMBER = {2},
     PAGES = {815--863},
      ISSN = {0003-486X,1939-8980},
       URL = {https://doi.org/10.4007/annals.2012.176.2.3},
}

\bib{Mayer}{article}{
    AUTHOR = {Mayer, Karl Heinz},
     TITLE = {{$G$}-invariante {M}orse-{F}unktionen},
   JOURNAL = {Manuscripta Math.},
   VOLUME = {63},
      YEAR = {1989},
    NUMBER = {1},
     PAGES = {99--114},
      ISSN = {0025-2611,1432-1785},
      DOI = {10.1007/BF01173705},
       URL = {https://doi.org/10.1007/BF01173705},
}
 
 \bib{Milnor}{book}{
    AUTHOR = {Milnor, John},
     TITLE = {Lectures on the {$h$}-cobordism theorem},
 PUBLISHER = {Princeton University Press, Princeton, NJ},
      YEAR = {1965},
     PAGES = {113},
}

 \bib{Palais}{article}{
 Author = {Palais, Richard S.},
 Title = {Homotopy theory of infinite dimensional manifolds},
 Journal = {Topology},
 ISSN = {0040-9383},
 Volume = {5},
 Pages = {1--16},
 Year = {1966},
 Language = {English},
 DOI = {10.1016/0040-9383(66)90002-4},
}

\bib{Vilms}{article}{
author={Vilms, Jaak}, 
title={Totally geodesic maps}, 
journal={J. Differential Geom.}, 
volume={4(1)}, 
pages={73-90}, 
year={1970}, 
} 

\bib{Wasserman}{article}{
    AUTHOR = {Wasserman, Arthur G.},
     TITLE = {Equivariant differential topology},
   JOURNAL = {Topology},
    VOLUME = {8},
      YEAR = {1969},
     PAGES = {127--150},
      ISSN = {0040-9383},
       DOI = {10.1016/0040-9383(69)90005-6},
       URL = {https://doi.org/10.1016/0040-9383(69)90005-6},
}

\bib{MW16}{article}{
    AUTHOR = {Wiemeler, Michael},
     TITLE = {Circle actions and scalar curvature},
   JOURNAL = {Trans. Amer. Math. Soc.},
    VOLUME = {368},
      YEAR = {2016},
    NUMBER = {4},
     PAGES = {2939--2966},
       DOI = {10.1090/tran/6666},
       URL = {https://doi.org/10.1090/tran/6666},
}

\end{biblist}
\end{bibdiv}

\end{document}